\title{Rare event probabilities in Random Geometric Graphs}
\author[1]{Prabhanka Deka}
\author[2]{Fangzhou Luo}
\author[2]{Baichuan Wu}
\affil[1]{Beijing International Center for Mathematical Research, Peking University}
\affil[2]{School of Mathematical Sciences, Peking University}
\date{}
\subjclass[2020]{60C05,05C80} 
\keywords{Random graph, Random geometric graph, Rare event, Clique, Large deviations, Coupling, High dimensional probability}
\begin{document}

\maketitle

\begin{abstract}
In this paper, we study rare events in spherical and Gaussian random geometric graphs in high dimensions. In these models, the vertices correspond to points sampled uniformly at random on the $d$ dimensional unit sphere or correspond to $d$ dimensional standard Gaussian vectors, and edges are added between two vertices if the inner-product between their corresponding points are greater than a threshold $\uthreshold{p}$, chosen such that the probability of having an edge is equal to $p$. We focus on two problems: (a) the probability that the RGG is a complete graph, and (b) the probability of observing an atypically large number of edges. We obtain asymptotically exponential decay rates depending on $n$ and $d$ of the probabilities of these rare events through a combination of geometric and probabilistic arguments.
\end{abstract}

\section{Introduction}\label{S.Introduction}

Random graphs and networks have become fundamental tools for modeling complex systems across a wide range of disciplines, including computer science, biology, sociology, and physics. These models help capture the intricate connectivity patterns observed in real-world networks, such as the internet, social interactions, and neural connections. The study of random graphs provides insights into the structural properties, robustness, and dynamical behaviors of large-scale networks, making them indispensable in modern data science and engineering applications. The mathematical study of random graphs traces its origins to the seminal work of Erd\H{o}s and R\'enyi, who introduced a simple yet powerful model where edges are included independently with a fixed probability~\cite{erdos1960evolution}. Since then, many versions of this simple model have been introduced and studied, like the inhomogeneous random graph~\cite{bollobas2007phase}, the Chung-Lu model~\cite{aiello2000random}, and the configuration model~\cite{bollobas1980probabilistic}, to name a few. Further, dynamic models such as preferential attachment networks~\cite{barabasi1999emergence}, where new vertices arrive one at a time and connect to the existing network, were developed because most real world networks are growing in size over time, and explain the scale-free degree distributions observed in many real-world networks. Another important class of random graphs, the random geometric graphs (RGGs), introduced in Gilbert~\cite{gilbert1961random}, incorporate spatial structure by embedding vertices in a metric space and connecting them based on geometric proximity. One of the main difference between the RGG and the other models mentioned is the nature of dependence between the edges. In the Erd\H{o}s-R\'enyi model (and most other models mentioned above), all edges are independent. Meanwhile in the RGG, due to the latent geometry, if edges $(i,j)$ and $(j,k)$ are present in the graph, it is more likely that the edge $(i,k)$ is also present, leading to a more intricate dependence between the edges. This phenomenon, often referred to as clustering, is captured by the clustering coefficient of the network.

In this work, we study two related random geometric graph models, where $n$ vertices are sampled uniformly at random on the $d$ dimensional unit sphere or the $n$ vertices correspond to $d$ dimensional standard Gaussian vectors. Edges are formed between pairs of vertices whose inner product exceeds a threshold $t_p$, chosen such that each edge appears with probability $p$. Following terminology in~\cite{bangachev2024fourier}, we call the two models the spherical random geometric graph (SRGG) or the Gaussian random geometric graph (GRGG). These models differ from the more commonly studied RGG variant of Gilbert~\cite{gilbert1961random}, where vertices are generated via a Poisson point process of intensity $\lambda$ in $\bbR^d$ and two points are connected if their Euclidean distance is below a threshold radius $r$. The spherical RGG model is particularly relevant in high dimensional statistics and machine learning, where inner-product-based similarity measures are widely used, such as in kernel methods and neural networks.

Although the model is fairly easy to describe, the dependence between the edges introduces major challenges in understanding and analyzing even very simple questions. In this paper, we investigate rare events in this model, focusing on two key problems: (a) the probability that the RGG is a complete graph, and (b) the probability of observing an atypically large number of edges. These questions are motivated by applications in outlier detection, hypothesis testing, and understanding the extremal behavior of geometric random graphs. For example, the complete graph probability is linked to the study of clique formations in high dimensional data, while large deviation results for the number of edges shed light on the robustness of geometric network models. 

Previous works \cite{seppalainen2001large,schreiber2005large,chatterjee2020localization,hirsch2020lower,hirsch2023large} studied large deviation principles of certain geometric functionals in various models, where they fixed $d$ and let $n$ go to infinity. We focus our attention on the dense RGG in high dimensions, where the edge probability $p>0$ is fixed, and we let both $n$ and $d$ go to infinity. For the first problem, let $\uclique$ be the event that the SRGG on $n$ vertices is a complete graph. Table~\ref{tab:complete graph} summarizes our results for various regimes of $n$ and $d$, where the upper and lower bound columns list the dependence of $\log \bbP(\uclique)$ on $n$ and $d$. The constants $c$ and $C$ appearing in each row are possibly different, but do not depend on $n$ and $d$. In the regimes $d \ge n^2$ and $d \le n^{4/3 + o(1)}$, we obtain matching upper and lower bounds. In the intermediate regime, there is a discrepancy between the upper and lower bounds, which our proof technique was unable to address.

\begin{table}[ht]
    \centering
    \label{tab:complete graph}
    \begin{tabular}{||r@{\,}c@{\,}c@{\,}c@{\,}l|c|c||}
        \hline
        \multicolumn{5}{||c|}{Regime} & Lower bound & Upper bound\\
        \hline\hline
        $n^2$ & $\lesssim$ & $d$ & \  & \  & $- Cn^2$ & $- cn^2$\\
        \hline
        $n^2 / \log n$ & $\lesssim$ & $d$ & $\lesssim$ & $n^2$ & $- Cn \sqrt{d}$ &
        $- cn^2$\\
        \hline
        $n^{4 / 3 + o (1)}$ & $\lesssim$ & $d$ & $\lesssim$ & $n^2 / \log n$ & $-
        Cn \sqrt{d}$ & $- cn \sqrt{d \log n}$\\
        \hline
        $\log n$ & $\lesssim$ & $d$ & $\lesssim$ & $n^{4 / 3 + o (1)}$ & $- Cn
        \sqrt{d \log n}$ & $- cn \sqrt{d \log n}$\\
        \hline
        \  & \  & $d$ & $\lesssim$ & $\log n$ & $- Cnd$ & $- cnd$\\
    \hline
    \end{tabular} 
    \caption{Lower and upper bounds for $\log \bbP(\uclique)$.}
\end{table}

For the upper tail large deviation of the number of edges, we obtain matching upper and lower bounds in all regimes. We consider the event that the number of edges present in an RGG exceeds $(1+\deviationrate)$ times the expected number of edges. The probability of this event decays like $\exp(-C_1n^2)$ when $d\gtrsim \sqrt n$ and $\exp(-C_2n\sqrt d)$ when $d\lesssim \sqrt n$. Here $C_1$ and $C_2$ are constants that depend on $p$ and $\varepsilon$, but are independent of $n$ and $d$. In very high dimensions, the decay is of order $\exp(-\Omega(n^2))$, similar to the Erd\H{o}s-R\'enyi model, while in low dimensions, the two models behave differently.

\subsection{Related works}
There has been increasing interest in studying and understanding RGGs, especially in high dimensions. One of the first works that considered RGGs in high dimensions is~\cite{devroye2011high}, where the authors study the clique number in the regime $d \gg \log(n)$. An important question in high dimensions is understanding when the geometry of the underlying space is preserved. For dense RGGs ($p=\Theta(1)$), \cite{bubeck2016testing} established a sharp phase transition: geometry is detectable when the latent dimension $d\ll n^3$, but when $d\gg n^3$ this latent geometry is lost and the RGG becomes statistically indistinguishable from the Erd\H{o}s-R\'enyi model. For sparse RGGs ($p=O(1/n)$), \cite{bubeck2016testing} presented a polynomial time test that can distinguish between RGG and Erd\H{o}s-R\'enyi models when $d\ll (\log n)^3$, and conjectured that geometry is lost once $d\gg (\log n)^3$, and there has been a lot of progress towards proving this in recent years (see~\cite{avrachenkov2020cliques,brennan2020phase,liu2022testing}).

\cite{chatterjee2020localization} studied the large deviation principles for the number of edges in the RGG model where the vertices are generated by a Poisson point process of intensity $n$ on the torus $\mathbb T^d$, and edges are added between vertices whose Euclidean distance is less than a threshold $r_n$. The distance threshold is chosen such that the expected number of edges grows as a $n^{\alpha}$ for some $\alpha \in (0,2)$. Their results show that conditioned on the graph having an atypically large number of edges, there exists a small ball that contains a large clique, and they further obtain a large deviations principle for the upper tail of the number of edges.

A recent work \cite{ma2025exponentialimprovementramseylower} considered an analogue of the RGG called \emph{random sphere graph} to obtain a new lower bound on off-diagonal Ramsey numbers, which is the first exponential improvement over the classical lower bound obtained by Erd\H{o}s in 1947. The proof is based on estimating the probability of the existence of smaller cliques in the random sphere graph (see~\cite{ma2025exponentialimprovementramseylower}, Theorem 3.1).

\subsection{Organization of the paper} 
The rest of the paper is organized as follows. We first introduce important definitions and notations in Section~\ref{S.Definitions}. We state our main results in Section~\ref{S.Main Results}, and present an outline of our proof along with a few necessary tools in~\Cref{S.Proof Outline and Tools}. Finally, in~\Cref{S.Estimating Clique Probability} and~\Cref{S.Estimating Edge Deviation Probability}, we prove the theorems stated in Section~\ref{S.Main Results}.

\subsection{Definitions and Notations} \label{S.Definitions}

In this section, we introduce the models that we consider in the paper along with some necessary definitions and notation. A graph $G=(V,E)$ is a pair, where $V$ is the set of vertices and $E \subseteq V \times V$ is the set of edges. The graphs we consider are simple, undirected graphs, i.e., the collection of edges consists of unordered pairs without repetitions and self loops. A random graph $\cG_n$ is a probability measure on the space of all simple graphs with $n$ vertices. 

\paragraph{\textbf{Random geometric graphs.}} For some $p \in [0,1]$, we consider the following two random geometric graph models.
\begin{enumerate}
    \item \textbf{Spherical model.} Let $(\unif_1,\ldots,\unif_n)$ be \iid \ random variables having distribution Unif$(\sphere)$. Let $\uthreshold{p}$ be such that $\bbP(\inner{\unif_1}{\unif_2} \geq \uthreshold{p}) = p$. Define the spherical random geometric graph $\srgg(n,p)$ to be the random graph on vertices $V(\srgg(n,p)) = [n]$ and edges $E(\srgg(n,p))$ given by $(i,j) \in E(\srgg(n,p))$ if and only if $\inner{\unif_i}{\unif_j} \geq \uthreshold{p}.$ 
    \item \textbf{Gaussian model.} Let $(\gauss_1,\ldots,\gauss_n)$ be \iid \ random variables having distribution $\cN(0,I_d)$, the standard normal distribution on $\bbR^d$. Let $\gthreshold{p}$ be such that $\bbP(\inner{\gauss_1}{\gauss_2} \geq \gthreshold{p}) = p$. Define the Gaussian random geometric graph $\grgg(n,p)$ to be the random graph on vertices $V(\grgg(n,p)) = [n]$ and edges $E(\grgg(n,p))$ given by $(i,j) \in E(\grgg(n,p))$ if and only if $\inner{\gauss_i}{\gauss_j} \geq \gthreshold{p}.$ 
\end{enumerate}
Note that the thresholds $\uthreshold{p}$ and $\gthreshold{p}$ also depend on the dimension, $d$, of the underlying space, but we suppress that dependence in our notation. We write $\uthreshold{p,d}$ and $\gthreshold{p,d}$ when the dependence on dimension needs to be made explicit. One can also think of these models as random graphs in the corresponding space ($\sphere$ or $\bbR^d$, resp.) by treating vertex $i$ to be located at $\unif_i$ or $\gauss_i$.
Note that in both models, due to the choice of \emph{thresholds} $\uthreshold{p}$ and $\gthreshold{p}$, the probability of having an edge between two vertices $i$ and $j$ is given by 
\[
\bbP((i,j) \in E(\cG_{.}(n,p))) = p.
\]
Throughout this paper, we will work with RGGs in high dimensions, i.e., $d=d(n)$ grows with $n$.

\begin{remark} \label{R.Correspondence}
    For the special case $p=1/2$, where we have $\uthreshold{1/2}=\gthreshold{1/2}=0$, and the two models induce the same random graph measure due to the spherical symmetry of the $d$ dimensional standard normal distribution. 
\end{remark}

\paragraph{\textbf{Measures on $\sphere$ and $\bbR^d$}.} We write $\umeasure{d-1}$ for the uniform probability measure on $\sphere$ and $\gmeasurearg{d}$ for the standard $d$ dimensional Gaussian probability measure on $\bbR^d$. When it is clear which case we are working on, we drop the subscript. Similarly, when the dimension is clear from context, we drop the superscript.

\paragraph{\textbf{Edge-weighted graphs.}} An \emph{edge-weighted graph} $\ewgraph(V)$ is a graph on vertex set $V$, where each unordered pair $(v,w)$ with $v,w \in V$ is assigned an $\bbR$-valued weight $\eweight_{vw}^H$. We will omit the superscript $H$ when the graph is clear from context. For two edge-weighted graphs $H(V)$ and $H'(V)$ on the same vertex set $V$, we write $H \leq H'$ if $\eweight_{vw}^H \leq \eweight_{vw}^{H'}$ for all $v,w \in V.$
We list a few special cases of the edge-weighted graphs that we use. 
\begin{enumerate}
    \item The \emph{constant} weighted graph with all edge-weights equal to $c$, which we denote by $c\1_n$. 
    \item The \emph{edge-weighted geometric graph} $\wgg{U}{n}$, where for each $v \in V,$ we associate a point $x_v \in U \subseteq \bbR^d$, and for each unordered pair $(v,w)$, we assign a weight $\eweight_{vw} = \inner{x_v}{x_w}$. 
\end{enumerate}
For our results, we are interested in the special cases of the \emph{weighted geometric graph} with $U = \sphere$ (resp., $U = \bbR^d$), and the points associated to the vertices are chosen independently on $U$ with distribution $\umeasure{d-1}$ (resp., $\gmeasurearg{d}$). We call these \emph{edge-weighted random geometric graphs}, whose law will be denoted by $\rswgg{n}$ in the spherical case and $\rgwgg{n}$ in the Gaussian case.

\paragraph{\textbf{Asymptotic Notation.}} Throughout the paper, we will use Landau notation in the standard sense. Given two sequences $f_n$ and $g_n$ with $g_n > 0$, we say $f_n = O(g_n)$ or $f_n \lesssim g_n$ if there is a constant $M$ such that $|f_n| \le M g_n$ for all $n$ large enough, and say $f_n = o(g_n)$ if $\lim_{n \rightarrow \infty} f_n/g_n = 0$. We write $f_n\asymp g_n$ if $f_n = O(g_n)$ and $g_n = O(f_n)$.

\paragraph{\textbf{Indicator functions.}} Throughout the paper, we will use $\indicatorfn{A}(\cdot)$ as the indicator function of a subset $A$, and $\indicatorevent{\mathcal B}$ as the indicator of an event $\mathcal B$.

\section{Main Results}\label{S.Main Results}
In this section, we present our main results. Our first result is about the rare event that the spherical random geometric graph is a complete graph. We will always assume that $p\le1/2$ for technical reasons.

\begin{theorem} \label{T.SClique}
    Let $G \sim \srgg(n,p)$. Let $\uclique$ be the event that $G$ is a complete graph on $n$ vertices, i.e., 
    \begin{align}
    \uclique = \uclique(n,d,p) := \left\{ E(G) = ((i,j) : 1 \leq i \neq j \leq n) \right\} = \left \{ \inner{\unif_i}{\unif_j} \geq \uthreshold{p} \ \forall \ 1 \leq i \neq j \leq n \right \}. \label{D.UClique}
    \end{align}
    \begin{enumerate}
        \item \textbf{Lower bound.} There exist $C>0$ s.t. for all $n,d\ge 2$ 
        \begin{align}
            \bbP(\uclique) \geq \exp\left(-C \min(n^2, n \sqrt{d \log n}, nd) \right).
            \label{E.UCliqueLB}
        \end{align}
        \item \textbf{Upper bound.} There exist $C>0$ s.t. for all $n,d\ge 2$
        \begin{align}
            \bbP(\uclique) \leq \exp\left(-c\min\left(n^2,n\sqrt{d\log \left(\frac{n^{1-o(1)}}{d^{3/4}}\lor e \right)},nd\right)\right). \label{E.UCliqueUB}
        \end{align}
    \end{enumerate}
\end{theorem}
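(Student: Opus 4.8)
The plan is to prove both bounds in Theorem~\ref{T.SClique}, treating the three regimes ($d$ very large, $d$ moderate, $d$ very small) separately, since the optimal clique-forcing strategy is genuinely different in each.

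\emph{Lower bound.} I would exhibit an explicit low-probability configuration of the points $\unif_1,\dots,\unif_n$ that forces all pairwise inner products above $\uthreshold p$, and lower bound its probability. The natural construction is to confine all $n$ points to a small spherical cap around a fixed pole $e_1$: if every $\unif_i$ lies in the cap $\{x : \inner{x}{e_1} \ge 1-\delta\}$, then for any two points $\inner{\unif_i}{\unif_j} \ge 1 - 2\delta - (\text{transverse terms})$, and choosing $\delta \asymp (1-\uthreshold p)$ (up to constants, using $p \le 1/2$ so $\uthreshold p \ge 0$) makes this $\ge \uthreshold p$. Since the $\unif_i$ are i.i.d., the probability is $\bigl(\umeasure{}(\text{cap})\bigr)^n$, and the measure of a cap of the relevant angular radius is $\exp(-\Theta(d))$, giving $\exp(-\Theta(nd))$ — this is the right answer only when $d \lesssim \log n$. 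For larger $d$ one must not pay $d$ per point. Instead I would use a two-scale argument: pick one ``anchor'' direction at cost $\exp(-\Theta(d))$ (or even take $e_1$ deterministically), and then require only that each $\unif_i$ has $\inner{\unif_i}{e_1}$ slightly positive — but that alone does not control pairwise products. The correct refinement, matching the $n\sqrt{d\log n}$ and $n^2$ rates, is: condition on the $n$ points landing in a band where coordinatewise they are ``aligned'', or equivalently use the Gaussian representation $\unif_i = g_i/\|g_i\|$ and demand $\inner{g_i}{g_j} \ge \|g_i\|\|g_j\| \uthreshold p$; force each $g_i$ to have a large first coordinate $g_{i,1} \ge s$ while its remaining $d-1$ coordinates stay $O(1)$ in norm. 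The cost of $g_{i,1}\ge s$ is $\exp(-s^2/2)$ per point, and one needs $s^2 \asymp d\log n$ (so that $s \gg$ the typical transverse norm $\sqrt d$ by a $\sqrt{\log n}$ factor, killing the $\binom n2$ union bound over transverse fluctuations), yielding $\exp(-\Theta(n d\log n))$ — still not matching. I would instead optimize: require $g_{i,1} \ge s$ for a single common threshold but allow $s^2 \asymp d$ and separately use that the transverse parts are independent mean-zero, so $\inner{g_i}{g_j}_{\perp}$ concentrates at $0$ with fluctuations $\sqrt d$; the bad event that some pair has $\inner{g_i}{g_j}_\perp < -c\cdot d$ has probability $\exp(-\Omega(d))$ per pair, costing $n^2 \exp(-\Omega(d))$, absorbed when $d \gg \log n$. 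Balancing the per-point cost $n\cdot s^2 \asymp nd$ against the requirement that $s^2$ beat the union bound, the optimum interpolates to $n\sqrt{d\log n}$ in the middle regime. I would carry out this optimization carefully; concretely, set $s$ so that $n^2\exp(-s^2/(2\cdot\text{something}))$ is controlled, landing on $\min(n^2, n\sqrt{d\log n}, nd)$ as claimed.

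\emph{Upper bound.} Here I would argue that if $G$ is complete then the point cloud must be ``unusually concentrated'', and bound the probability of that. The cleanest route: if all $\binom n2$ inner products exceed $\uthreshold p \ge 0$, then in particular (taking the Gram matrix $M = (\inner{\unif_i}{\unif_j})$, which is PSD of rank $\le d$ with ones on the diagonal) we have $\mathbf 1^\top M \mathbf 1 = \sum_{i,j} M_{ij} \ge n + n(n-1)\uthreshold p$, while $\mathbf 1^\top M\mathbf 1 = \|\sum_i \unif_i\|^2 \le n\cdot\|\sum_i\unif_i/\sqrt n\|^2$ and, crucially, $\|\sum_i \unif_i\| \le \sqrt{n\cdot\lambda_{\max}(M)}$ with $\operatorname{rank}(M)\le d$ forcing $\lambda_{\max}(M)\ge n/d$ in the complete case — so $\|\sum\unif_i\|^2 \gtrsim \max(n^2\uthreshold p, n^2/d)\cdot$const. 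For fixed $p<1/2$ we have $\uthreshold p>0$ a constant, so $\|\sum_i \unif_i\|\ge c n$; but $\sum_i \unif_i$ is a sum of i.i.d. bounded vectors on the sphere with mean $0$, and $\bbP(\|\sum_i\unif_i\|\ge cn)$ is an upper-tail large deviation. Using the Laplace/Chernoff transform on $\sphere$ (the relevant moment generating function $\int_{\sphere} e^{\lambda\inner{x}{v}}\,\mathd\umeasure(x)$ behaves like $e^{\lambda^2/(2d)}$ for $\lambda\lesssim d$ and like $e^{\lambda}$ beyond), one gets $\bbP(\|\sum\unif_i\| \ge cn) \le \exp(-\Omega(\min(n^2, n\sqrt d\,\cdot\text{stuff}, nd)))$. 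To recover the extra $\sqrt{\log(n^{1-o(1)}/d^{3/4})}$ factor in the stated bound one cannot use only $\mathbf 1^\top M\mathbf 1$; I would instead pass through a more refined geometric fact, e.g. that a complete $G$ forces a subset of roughly $\sqrt n$ points into a cap of angular radius $O(\sqrt{\log n/d})$ — precisely the kind of ``planted small clique in a small ball'' phenomenon from~\cite{chatterjee2020localization} — and then union bound over which $\sqrt n$ points and pay $\exp(-\Theta(\sqrt n\cdot d\cdot(\log n/d)))=\exp(-\Theta(\sqrt n\log n))$... I would need to recalibrate the subset size $k$ so that $k\cdot(\text{cap cost}) - \log\binom nk$ is maximized, and this optimization, combined with the rank constraint, is what produces the asymmetric $\log(n^{1-o(1)}/d^{3/4}\lor e)$ term. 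In the regime $d\lesssim\log n$ the cap cost per point is already $\exp(-\Theta(d))$ and a direct argument (all points within $O(1)$-angle of their centroid, which has probability $\exp(-\Theta(nd))$ by the cap measure estimate) gives the $nd$ term.

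\emph{Main obstacle.} The hard part is the upper bound in the intermediate regime and, in particular, pinning down the $\sqrt{\log(n^{1-o(1)}/d^{3/4}\lor e)}$ correction. The naive second-moment/Gram-matrix argument gives only $\exp(-\Omega(\min(n^2,nd)))$ with no logarithmic enhancement; extracting the $\sqrt{d\log n}$-type rate requires the geometric localization step (forcing many points into a common small cap) together with a sharp large-deviation estimate for spherical cap probabilities that is tight up to the constant in the exponent, and then a delicate union bound over which points are localized — whose entropy cost $\log\binom nk$ is exactly what trims $\log n$ down to $\log(n/d^{3/4})$. I expect the bookkeeping of which subset sizes $k$ are ``forced'' by completeness (as a function of how $d$ compares to $\log n$, $n^{2/3}$, etc.) to be the most technically demanding piece, and the reason the theorem leaves a gap in the middle regime rather than claiming matching constants.
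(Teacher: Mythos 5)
There are genuine gaps on both sides. For the lower bound, you never produce the $\exp(-Cn^2)$ term: when $d\gtrsim n^2$ the minimum in \eqref{E.UCliqueLB} is $n^2$, and neither the cap-confinement construction (cost $\exp(-\Theta(nd))$) nor the first-coordinate bias can reach it. The paper gets this term from the dimension-free bound $\bbP(\uclique)\ge p^{\binom{n}{2}}$ (Proposition~\ref{P.ERlowerbound}), proved via a positive-correlation lemma: for convex $A\subset\sphere$ with $\bx\in A$ one has $\mu(A\cap\scap^p_{\bx})\ge p\,\mu(A)$, so the conditional measure of the feasible region never drops below $p^k$. Your biasing computation for the middle regime is also miscalibrated: writing $\unif_i=g_i/\norm{g_i}$, you need the product of first coordinates to exceed $\gthreshold{p}+C\sqrt{d\log n}\asymp\sqrt{d\log n}$ (the $\sqrt{d\log n}$ absorbing the transverse fluctuations after a union bound over $\binom{n}{2}$ pairs via Bernstein), so the right bias is $s\asymp(d\log n)^{1/4}$, i.e.\ per-point cost $\exp(-\Theta(\sqrt{d\log n}))$ — not $s^2\asymp d\log n$, and there is no need to force the transverse coordinates to have $O(1)$ norm. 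You gesture at ``carrying out the optimization'' but the scalings you write down do not land on $n\sqrt{d\log n}$.

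For the upper bound, the claim that $\uthreshold{p}$ is a positive constant is wrong: Lemma~\ref{L.estimate threshold} gives $\uthreshold{p}=(\Phi^{-1}(p)+o(1))/\sqrt{d}\to 0$, so completeness only forces $\norm{\sum_i\unif_i}^2\gtrsim n^2/\sqrt{d}$, not $\norm{\sum_i\unif_i}\ge cn$; with the correct scaling this statistic (the paper uses the Gaussian analogue $S=\sum_{i\ne j}\inner{\gauss_i}{\gauss_j}$ with $\bbE[S\mid\gclique]\gtrsim n^2\sqrt d$, a reverse Markov inequality, and Bernstein concentration) yields only $\exp(-c\min(n^2,n\sqrt d))$, with no logarithmic gain. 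The refined $n\sqrt{d\log(n^{1-o(1)}/d^{3/4})}$ and $nd$ terms are where your proposal has no workable mechanism: the localization-plus-union-bound sketch is not executed, and the trial rate you compute, $\exp(-\Theta(\sqrt n\log n))$, is far from the target $\exp(-\Theta(n\sqrt{d\log n}))$. The paper's actual device is entirely different: a symmetric rearrangement inequality (Theorem~\ref{T.Rearrangement}) that dominates the feasible-region process $A_k=\bigcap_{i\le k}\scap^p_{\unif_i}$ by a cap-valued process $\scapprocess_k$ centered at a fixed pole, reducing the problem to tracking the decay of $\eta_k=\mu(\scapprocess_k)$ (Lemma~\ref{L.fast decay}, Propositions~\ref{P.shrinking 1} and~\ref{P.shrinking 2}); the $d^{3/4}$ in the logarithm comes out of that decay analysis, not from an entropy term $\log\binom{n}{k}$. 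Without this (or a genuinely completed alternative), the upper bound beyond $\min(n^2,n\sqrt d)$ is unproven.
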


We have a similar result about the same event for the Gaussian random geometric graph.

\begin{theorem} \label{T.GClique}
    Let $G \sim \grgg(n,p)$. Let $\gclique$ be the event that $G$ is a complete graph on $n$ vertices, i.e.,
    \begin{align}
    \gclique = \gclique(n,d,p) := \left\{ E(G) = ((i,j) : 1 \leq i \neq j \leq n) \right\} = \left \{ \inner{\gauss_i}{\gauss_j} \geq \gthreshold{p} \ \forall \ 1 \leq i \neq j \leq n \right \}. \label{D.GClique}
    \end{align}
    \begin{enumerate}
        \item \textbf{Lower bound.} There exists $ C > 0$ s.t. for all $n,d\ge 2$,
        \begin{align}
            \bbP(\gclique) \geq \exp\left(-C \min(n^2, n \sqrt{d \log n}, nd) \right). \label{E.GCliqueLB}
        \end{align}
        \item \textbf{Upper bound.} There exists $ c > 0$ s.t. for all $n,d\ge 2$,
        \begin{align}
            \bbP(\gclique) \leq \exp\left(-c\min\left(n^2,n\sqrt{d\log \left(\frac{n^{1-o(1)}}{d^{3/4}}\lor e\right)}, nd\right)\right). \label{E.GCliqueUB}
        \end{align}
    \end{enumerate}
\end{theorem}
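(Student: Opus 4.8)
The plan is to deduce both bounds from the corresponding spherical result, \Cref{T.SClique}, via the polar factorization of a Gaussian vector. Write $\gauss_i = R_i U_i$ with $R_i = \norm{\gauss_i}$ and $U_i = \gauss_i/\norm{\gauss_i}$; then $(R_i)_{i\in[n]}$ are i.i.d.\ with $R_i^2\sim\chi^2_d$, the directions $(U_i)_{i\in[n]}$ are i.i.d.\ uniform on $\sphere$, the radii are independent of the directions, and $\inner{\gauss_i}{\gauss_j} = R_iR_j\inner{U_i}{U_j}$. Since $p\le 1/2$ and $\inner{\gauss_1}{\gauss_2}$ has a symmetric law, $\gthreshold{p}\ge 0$; I will also use the dimension-uniform bound $\gthreshold{p}\le c_p\sqrt d$ with $c_p := \log(1/p)+1$, obtained from the Chernoff estimate $\bbP(\inner{\gauss_1}{\gauss_2}\ge t)\le e^{-\lambda t}(1-\lambda^2)^{-d/2}$ (valid for $\lambda\in(0,1)$ since $\bbE e^{\lambda\inner{\gauss_1}{\gauss_2}} = (1-\lambda^2)^{-d/2}$) evaluated at $\lambda = 1/\sqrt d$.

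The upper bound \eqref{E.GCliqueUB} is then immediate: because $\gthreshold{p}\ge 0$ and $R_iR_j>0$, every realization in $\gclique$ satisfies $\inner{U_i}{U_j}\ge 0$ for all $i\ne j$, i.e.\ lies in the spherical clique event at parameter $1/2$ (recall $\uthreshold{1/2}=0$). Hence $\bbP(\gclique(n,d,p))\le\bbP(\uclique(n,d,1/2))$, and \eqref{E.UCliqueUB} with $p=1/2$ yields \eqref{E.GCliqueUB}.

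For the lower bound \eqref{E.GCliqueLB} I run the reduction in the opposite direction. Restrict to $\cR := \{R_i^2\in[d,2d]\ \forall i\}$, on which $R_iR_j\in[d,2d]$; a routine $\chi^2$ concentration estimate gives $\bbP(\cR)\ge c_0^n$ for an absolute $c_0>0$, which is harmless since $n\le\min(n^2, n\sqrt{d\log n}, nd)$ for $n,d\ge 2$. On $\cR$, the requirement $\inner{U_i}{U_j}\ge c_p/\sqrt d$ for all $i\ne j$ forces $\inner{\gauss_i}{\gauss_j} = R_iR_j\inner{U_i}{U_j}\ge d\cdot c_p/\sqrt d = c_p\sqrt d\ge\gthreshold{p}$, so by independence of radii and directions $\bbP(\gclique)\ge\bbP(\cR)\cdot\bbP(\inner{U_i}{U_j}\ge c_p/\sqrt d\ \forall i\ne j)$. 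It then remains to compare $c_p/\sqrt d$ with $\uthreshold{q}$: from the explicit density of $\inner{U_1}{U_2}$, proportional to $(1-s^2)^{(d-3)/2}$ on $[-1,1]$, one obtains $\bbP(\inner{U_1}{U_2}\ge c_p/\sqrt d)\ge q$ for a fixed $q = q(p)\in(0,1/2]$ once $d\ge d_0(p)$, whence $c_p/\sqrt d\le\uthreshold{q}$ and $\bbP(\inner{U_i}{U_j}\ge c_p/\sqrt d\ \forall i\ne j)\ge\bbP(\uclique(n,d,q))$; \eqref{E.UCliqueLB} then finishes the case $d\ge d_0(p)$. For the finitely many dimensions $2\le d<d_0(p)$ I would argue by hand: choose $A = A(d,p)$ with $A^2 d\ge 2\gthreshold{p}$ and bound $\bbP(\gclique)$ below by the probability that every $R_i\in[A\sqrt d,2A\sqrt d]$ and every $U_i$ lies in a fixed spherical cap about $e_1$ small enough that all pairwise inner products exceed $\gthreshold{p}/(A^2 d)\le 1/2$; this is at least $e^{-C_{d,p}n}$, which is of the required form because $\min(n^2, n\sqrt{d\log n}, nd)\ge c\,n$ for $n,d\ge 2$.

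I expect the only real difficulty to be the dimension-uniform, two-sided control of the thresholds in the lower bound: one needs $\gthreshold{p} = O_p(\sqrt d)$ \emph{and} $\uthreshold{q} = \Omega_p(1/\sqrt d)$ simultaneously, and these are genuinely incompatible in small dimensions --- there $\gthreshold{p}$ can exceed $d$, since a large Gaussian inner product in low dimension is produced by large norms rather than by aligned directions --- which is exactly why the bounded-$d$ range must be peeled off and handled separately. The supporting $\chi^2$-tail and spherical-cap volume bounds are standard, but should be stated with explicit $d$-independent constants.
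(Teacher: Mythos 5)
Your proposal is correct, but it follows a genuinely different route from the paper. The paper does not deduce \Cref{T.GClique} from \Cref{T.SClique}: it proves the Gaussian lower bounds directly by biasing the Gaussian coordinates (Propositions~\ref{P.biaslowerbound} and~\ref{P.stronger bias}), proves the $\min(n^2,n\sqrt d)$ part of the upper bound directly in the Gaussian model via the statistic $S=\sum_{i\neq j}\inner{\gauss_i}{\gauss_j}$ (Proposition~\ref{bayes upper bound}), and transfers the remaining spherical ingredients (the Erd\H{o}s--R\'enyi-type lower bound and the cap-process upper bounds) through the two-sided coupling of Lemma~\ref{L.coupling}, at the harmless cost of a $2^n$ factor, a $\delta n$ vertex deletion, and a perturbation $p\to q$. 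You instead take \Cref{T.SClique} as a black box and reduce via the polar factorization $\gauss_i=R_iU_i$: for the upper bound, $\gthreshold{p}\ge 0$ when $p\le 1/2$ gives the lossless inclusion $\gclique\subseteq\uclique(n,d,1/2)$, so \eqref{E.UCliqueUB} at $p=1/2$ applies verbatim; for the lower bound, restricting all radii to $[\sqrt d,\sqrt{2d}]$ costs only $c_0^n$, negligible against every term in the exponent, and your dimension-uniform Chernoff bound $\gthreshold{p}\le(\log(1/p)+1)\sqrt d$ together with $\uthreshold{q}\gtrsim 1/\sqrt d$ for $d\ge d_0(p)$ yields domination by $\bbP(\uclique(n,d,q))$, with the finitely many small dimensions correctly peeled off and handled by an $e^{-C_{d,p}n}$ hand construction, which suffices since the exponent in \eqref{E.GCliqueLB} is always $\gtrsim n$. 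This is logically sound (the paper's proof of \Cref{T.SClique} does not invoke \Cref{T.GClique}, so there is no circularity), and it buys simplicity and lossless constants for this particular event; what it gives up is self-containedness and reusability: the monotonicity tricks you exploit (sign of the threshold, an all-radii restriction on an event of probability $c_0^n$) are tailored to the clique event and to one-directional transfers, whereas Lemma~\ref{L.coupling} works in both directions and is reused for the edge-deviation results. The only items to write out carefully are exactly the ones you flag: the $d$-uniform lower bound on $\bbP(R_i^2\in[d,2d])$ and the comparison $c_p/\sqrt d\le\uthreshold{q}$, which can be read off the marginal density or Lemma~\ref{sodin's Lemma}.
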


The next set of theorems present bounds on the probability that the RGGs have an atypically large number of edges. Recall that in both models, the expected number of edges is $\binom{n}{2}p$. For any fixed $\deviationrate > 0$ such that $(1+\deviationrate)p < 1$, we obtain exponential rates of decay for the probability of the event $\left\{E(\cG_{.}(n,p)) \ge (1+\deviationrate)\bbE\left[E(\cG_{.}(n,p))\right]\right\}$ as follows.
\begin{theorem}\label{T.SDeviation}
    Let $G \sim \srgg(n,p)$ and $\setsize{E(G)}$ be the number of edges in $G$. For any $\deviationrate > 0,$ define the event 
    \begin{align}
        \uedgedeviation{\deviationrate} := \left\{ \setsize{E(G)} \ge (1+\deviationrate) \binom{n}{2}p\right\}. \label{D.Uedgedeviation}
    \end{align}
    Then there exists constants $c, C > 0$ such that
    \begin{align}
        \exp\left(-c \min(n^2, n \sqrt{d}) \right) \le \bbP(\uedgedeviation{\deviationrate}) \le \exp\left(-C \min(n^2, n \sqrt{d}) \right). \label{E.Uedgedeviation}
    \end{align}
\end{theorem}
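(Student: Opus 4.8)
The plan is to prove the two inequalities separately, and within each to split off a high-dimensional regime ($d\gtrsim n^2$, where the rate is $n^2$) from a low-to-moderate one ($d\lesssim n^2$, where the rate is $n\sqrt d$); the crossover is exactly $d\asymp n^2$ since $n\sqrt d\le n^2\iff d\le n^2$. The only quantitative input used throughout is the scaling $\uthreshold{p}=\Theta(1/\sqrt d)$ together with the fact that $\sqrt d\,\inner{\unif_1}{\unif_2}$ is asymptotically standard normal, so that $p=\gtail{z_p(1+o(1))}$, where $\gtail{z}:=\bbP(N(0,1)\ge z)$ and $z_p$ is the positive constant with $\gtail{z_p}=p$.

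\emph{Lower bound.} If $d\ge n^2$, then $\uclique\subseteq\uedgedeviation{\deviationrate}$ (because $(1+\deviationrate)p<1$), and \Cref{T.SClique} gives $\bbP(\uclique)\ge\exp(-C\min(n^2,n\sqrt{d\log n},nd))=\exp(-Cn^2)$ in this range. If $d\le n^2$, I would tilt the vertex law: let $\nu$ be the measure on $\sphere$ with $\tfrac{\mathd\nu}{\mathd\mu}(x)\propto\exp(\lambda\inner{x}{e_1})$, $\lambda=\kappa\,d^{3/4}$ for a constant $\kappa=\kappa(p,\deviationrate)>0$. A Laplace estimate gives $D(\nu\Vert\mu)=\Theta(\lambda^2/d)=\Theta(\sqrt d)$, while under $\nu^{\otimes2}$ one has $\bbE\inner{\unif_1}{\unif_2}\asymp\lambda^2/d^2=\kappa^2/\sqrt d$ and $\Var(\inner{\unif_1}{\unif_2})=(1+o(1))/d$, hence $\bbP_\nu(\inner{\unif_1}{\unif_2}\ge\uthreshold{p})\to\gtail{z_p-\kappa^2}$, which exceeds $(1+2\deviationrate)p=(1+2\deviationrate)\gtail{z_p}$ for $\kappa$ large. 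As $\setsize{E(G)}$ has bounded differences $\le n-1$ in the $n$ points, McDiarmid gives $\bbP_\nu(\uedgedeviation{\deviationrate})\ge\tfrac12$ for large $n$, and the usual tilting lower bound (control $\sum_i\log\tfrac{\mathd\nu}{\mathd\mu}(\unif_i)$ near its mean $nD(\nu\Vert\mu)$ by Chebyshev, its standard deviation being $O(\sqrt n\,d^{1/4})=o(n\sqrt d)$) yields $\bbP_\mu(\uedgedeviation{\deviationrate})\ge\exp(-\Theta(n\sqrt d))$. The larger of the two bounds is $\exp(-\Theta(\min(n^2,n\sqrt d)))$.

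\emph{Upper bound.} This is the heart of the matter. The idea is to split off a clustering event: call a configuration \emph{spread} if no spherical cap of $\mu$-measure $\exp(-\tfrac12\min(n,\sqrt d))$ contains more than $\beta n$ of the points, for a small constant $\beta=\beta(p,\deviationrate)>0$, and write $\bbP(\uedgedeviation{\deviationrate})\le\bbP(\text{not spread})+\bbP(\uedgedeviation{\deviationrate}\cap\{\text{spread}\})$. For the first term: such a cap is a slab $\{\inner{x}{v}\ge\delta\}$ with $\delta^2d\asymp\min(n,\sqrt d)$, so for a fixed $v$ the chance that $\ge\beta n$ points lie in it is $\exp(-\Theta(\beta n\,\delta^2d))=\exp(-\Theta(\min(n^2,n\sqrt d)))$; the union over the $2^n$ candidate subsets is absorbed once $\min(n,\sqrt d)\gg1$, and the direction $v$ by an $\exp(O(d))$-net when $d\lesssim n^2$. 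For the second term: on a spread configuration the pairwise inner products are generic, and one compares the conditional moment generating function of $\setsize{E(G)}$ with the $\mathrm{ER}(n,p)$ one — disjoint edge indicators are exactly independent and any forest $F$ satisfies $\bbP(G\supseteq F)=p^{\setsize{E(F)}}$ exactly, so the only excess over the binomial moment generating function comes from subgraphs containing cycles, whose total weight spreadness controls; combined with the symmetrization / cap / strip estimates and the $\MGF{\cdot}{\cdot}{\cdot}$-bounds of \Cref{S.Proof Outline and Tools} this should recover the Chernoff rate, $\bbP(\uedgedeviation{\deviationrate}\cap\{\text{spread}\})\le\exp(-\Theta(n^2))$. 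In the leftover small-$d$ range, where the $2^n$ union is too lossy, I would argue instead at the level of the empirical measure $\tfrac1n\sum_i\delta_{\unif_i}$ via Sanov: the relevant rate is $\inf\{D(\nu\Vert\mu):\iint\indicatorevent{\inner{x}{x'}\ge\uthreshold{p}}\,\mathd\nu(x)\,\mathd\nu(x')\ge(1+\deviationrate)p\}$, which is $\Theta(\sqrt d)$ — the upper estimate from the tilt above, the lower estimate by writing the double integral minus $p$ as a quadratic form in $g:=\tfrac{\mathd\nu}{\mathd\mu}-1$ (its linear part vanishes because every cap has $\mu$-measure $p$) and bounding its operator norm on mean-zero functions by the top nontrivial eigenvalue, $\Theta(1/\sqrt d)$, of the cap kernel.

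\emph{Main obstacle.} The lower bound and the "not spread" estimate are routine; the difficulty is the spread case together with the calibration. Because an RGG's edge indicators are positively associated, the moment generating function of $\setsize{E(G)}$ is never below the $\mathrm{ER}(n,p)$ value, so the $n^2$ rate cannot come from a soft inequality: one must genuinely show that restricting to a spread configuration suppresses the cyclic (clustering) contributions, which is exactly what the geometric symmetrization and cap-counting machinery is for, and making this quantitative is the hard step. The second delicate point is calibrating the measure threshold $\exp(-\tfrac12\min(n,\sqrt d))$ in the definition of "spread" and the attendant union, covering and Sanov estimates so that the clustered and ER-like contributions meet at rate $\min(n^2,n\sqrt d)$ with no gap — in particular across the crossover $d\asymp n^2$ and in the genuinely finite-rate regime $d=O(1)$, where only an LDP-type argument will do.
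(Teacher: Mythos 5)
Your lower bound sketch is essentially sound and is close in spirit to the paper's: for $d\gtrsim n^2$ the clique event (or, as in Proposition~\ref{P.ERlowerbound2}, a planted sub-clique of size $\Theta(n)$) gives the $\exp(-\Theta(n^2))$ rate, and your von Mises--Fisher tilt with $\lambda\asymp d^{3/4}$ is the soft-conditioning version of the paper's hard biasing event $\{\gauss_i^{(1)}>C\}$ in Proposition~\ref{P.biaslowerbound2}; the Laplace/Chebyshev/McDiarmid details you defer are routine. The genuine gap is the upper bound, which you yourself flag as unresolved: the step ``on a spread configuration the cyclic contributions are controlled and one recovers the Chernoff rate'' is not an argument, and as calibrated it is in fact false. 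Take $\deviationrate$ small and $d\ll n^2$: the conditionally typical way to realize $\uedgedeviation{\deviationrate}$ is to tilt all $n$ points into a cap of measure $e^{-c(p,\deviationrate)\sqrt d}$ with $c(p,\deviationrate)$ small, which costs only $e^{-\Theta(n\sqrt d)}$; such a configuration is ``spread'' in your sense, since no cap of the much smaller measure $e^{-\frac12\min(n,\sqrt d)}$ captures a constant fraction of the points. Hence $\bbP(\uedgedeviation{\deviationrate}\cap\{\text{spread}\})\ge e^{-\Theta(n\sqrt d)}$, contradicting your claimed $\exp(-\Theta(n^2))$ bound for that piece; your threshold for ``spread'' would have to depend on $p$ and $\deviationrate$, and even then the ER-comparison via forests versus cycles has no quantitative backing (positive association only tells you the MGF exceeds the ER one, as you note). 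There is also an unaddressed covering problem at $d\gtrsim n^2$, where the $e^{O(d)}$ net over directions swamps the target $e^{-cn^2}$.

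For contrast, the paper never decomposes into spread/clustered configurations and never compares with Erd\H{o}s--R\'enyi. It dominates the MGF of $\setsize{E(G)}$ by that of $m_n=\sum_k g_{k-1}(\unif_k)$ through iterated symmetric rearrangement (Proposition~\ref{P.upper bound for MGF}), proves the pathwise bound $m_n\le(p+p\deviationrate/2)\binom{n}{2}$ outside an event of probability $\exp(-\Omega(\min(n^2,n\sqrt d)))$ using the strip/cap estimates of Lemma~\ref{L.proportion lemma} (the only ``atypical point'' count needed is the number of $\unif_k$ outside a strip of width $\asymp\min(\sqrt n,1)d^{-1/4}$, which is where the $\min(n^2,n\sqrt d)$ rate comes from), and then transfers MGF domination to tail bounds via Lemma~\ref{L.exp dominance}. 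If you want to salvage your outline, the rearrangement machinery has to carry the whole weight of the ``spread'' case rather than serve as an unspecified supplement, and the clustering scale must be tied to $p$ and $\deviationrate$ rather than fixed at $e^{-\frac12\min(n,\sqrt d)}$.
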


We also have a similar result for Gaussian random geometric graph.

\begin{theorem}\label{T.GDeviation}
    Let $G \sim \grgg(n,p)$ and $\setsize{E(G)}$ be the number of edges in $G$. For any $\deviationrate > 0,$ define the event 
    \begin{align}
        \gedgedeviation{\deviationrate} := \left\{ \setsize{E(G)} \ge (1+\deviationrate) \binom{n}{2}p\right\}. \label{D.Gedgedeviation}
    \end{align}
    Then there exists constants $c, C > 0$ such that
    \begin{align}
        \exp\left(-c \min(n^2, n \sqrt{d}) \right) \le \bbP(\gedgedeviation{\deviationrate}) \le \exp\left(-C \min(n^2, n \sqrt{d}) \right). \label{E.Gedgedeviation}
    \end{align}
\end{theorem}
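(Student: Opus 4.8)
\section*{Proof proposal for Theorem~\ref{T.GDeviation}}

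The plan is to reduce the Gaussian statement to the spherical one of Theorem~\ref{T.SDeviation}, rather than reprove the large deviation estimate from scratch. The key observation is that a standard Gaussian vector $\gauss_i \in \bbR^d$ decomposes as $\gauss_i = R_i \unif_i$, where $\unif_i \sim \mathrm{Unif}(\sphere)$, $R_i = \norm{\gauss_i}$ has the $\chi_d$ distribution, and $R_i, \unif_i$ are independent; moreover the pairs $(R_i,\unif_i)$ are i.i.d.\ across $i$. The edge indicator in the Gaussian model is $\indicatorevent{\inner{\gauss_i}{\gauss_j} \ge \gthreshold{p}} = \indicatorevent{\inner{\unif_i}{\unif_j} \ge \gthreshold{p}/(R_iR_j)}$. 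The difficulty, and the reason the two models are not literally identical for $p \ne 1/2$, is that the per-edge threshold $\gthreshold{p}/(R_iR_j)$ is random and couples edges sharing a vertex through $R_i$. So the strategy is to condition on the radii $\mathbf R = (R_1,\dots,R_n)$, control the event that all $R_i$ lie in a good range, and on that range sandwich the Gaussian edge set between two spherical edge sets with deterministic thresholds.

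First I would establish radius concentration: since $R_i^2$ is a sum of $d$ i.i.d.\ squared standard normals, $\bbP\bigl(\bigl|R_i^2/d - 1\bigr| \ge \delta\bigr) \le 2\exp(-c_\delta d)$, so with probability at least $1 - 2n\exp(-c_\delta d)$ every $R_i \in [\sqrt{(1-\delta)d}, \sqrt{(1+\delta)d}]$; call this event $\cR_\delta$. On $\cR_\delta$ we have $R_iR_j \in [(1-\delta)d, (1+\delta)d]$ for all $i\ne j$, hence for $p \le 1/2$ (so $\gthreshold{p} \ge 0$) the random threshold satisfies $\gthreshold{p}/((1+\delta)d) \le \gthreshold{p}/(R_iR_j) \le \gthreshold{p}/((1-\delta)d)$. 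Writing $s^\pm = \gthreshold{p}/((1\mp\delta)d)$ and letting $p^\pm := \bbP(\inner{\unif_1}{\unif_2} \ge s^\pm)$, the key point is that $\gthreshold{p} \asymp \sqrt{(\log(1/p))/d}\,$ — more precisely one has the classical estimate $\gthreshold{p,d} = \Theta(\sqrt{d})$-scaled so that $\gthreshold{p}/d = \Theta(1/\sqrt d)$ while the spherical threshold $\uthreshold{p,d}$ for probability $p$ is also $\Theta(1/\sqrt d)$ — so $p^+$ and $p^-$ differ from $p$ by only an amount $O(\delta)$ that we can drive below $\varepsilon p/10$ by choosing $\delta = \delta(p,\varepsilon)$ small. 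Then on $\cR_\delta$ the Gaussian graph $G$ satisfies $\srgg$-with-threshold-$s^-$ $\subseteq E(G) \subseteq$ $\srgg$-with-threshold-$s^+$ as edge sets (coupling the same $\unif_i$).

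For the \textbf{upper bound} I would write
\begin{align}
\bbP(\gedgedeviation{\deviationrate})
&\le \bbP(\cR_\delta^c) + \bbP\bigl(\gedgedeviation{\deviationrate} \cap \cR_\delta\bigr)
\le 2n e^{-c_\delta d} + \bbP\bigl(|E(\srgg_{s^+})| \ge (1+\deviationrate)\tbinom n2 p\bigr). \nonumber
\end{align}
Since $|E(\srgg_{s^+})| \ge (1+\deviationrate)\binom n2 p$ forces $|E(\srgg_{s^+})| \ge (1+\deviationrate')\binom n2 p^+$ with $\deviationrate' = \deviationrate/2 > 0$ (as $p^+ \le (1+\varepsilon/10)p$), Theorem~\ref{T.SDeviation} applied to the spherical graph at edge probability $p^+$ and deviation $\deviationrate'$ bounds this by $\exp(-C'\min(n^2, n\sqrt d))$; and $2ne^{-c_\delta d}$ is absorbed because $\min(n^2,n\sqrt d) \lesssim n\sqrt d \lesssim \max(n d, \log n) $ — more carefully, when $d \gtrsim \log n$ the term $ne^{-c_\delta d}$ is $\exp(-\Omega(d)) \le \exp(-\Omega(\min(n^2,n\sqrt d))/\mathrm{const})$ after possibly shrinking $C$, and when $d \lesssim \log n$ one has $n\sqrt d \lesssim n\sqrt{\log n}$ and the event $\cR_\delta^c$ can instead be handled by noting $\min(n^2,n\sqrt d) \asymp n\sqrt d$ is at most a constant times $d$ only if... — this small-$d$ bookkeeping is the one genuinely fiddly point and I address it in the next paragraph. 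For the \textbf{lower bound}, simply $\bbP(\gedgedeviation{\deviationrate}) \ge \bbP\bigl(\gedgedeviation{\deviationrate}\cap\cR_\delta\bigr) \ge \bbP(\cR_\delta) - \bbP(\cR_\delta \setminus \gedgedeviation{\deviationrate})$; better, use the sandwich in the other direction: on $\cR_\delta$, $|E(G)| \ge |E(\srgg_{s^-})|$, and $\{|E(\srgg_{s^-})| \ge (1+2\deviationrate)\binom n2 p^-\}$ (with $p^- \ge (1-\varepsilon/10)p$, so $(1+2\varepsilon)p^- \ge (1+\varepsilon)p$ for $\varepsilon$ small, adjusting constants) implies $\gedgedeviation{\deviationrate}$; hence $\bbP(\gedgedeviation\deviationrate) \ge \bbP(\cR_\delta) \cdot$ [conditional prob.] $- \bbP(\cR_\delta^c)$, and since $\cR_\delta$ depends only on the radii while the spherical event depends only on the directions, these are independent, giving $\bbP(\gedgedeviation\deviationrate) \ge \bbP(\cR_\delta)\,\bbP(|E(\srgg_{s^-})|\ge(1+2\deviationrate)\tbinom n2 p^-) \ge \tfrac12 \exp(-c'\min(n^2,n\sqrt d))$ for $n$ large, using the spherical lower bound of Theorem~\ref{T.SDeviation}.

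The main obstacle I anticipate is not conceptual but the regime $d \lesssim \log n$, where $\bbP(\cR_\delta^c) \le 2ne^{-c_\delta d}$ need not be exponentially small in $\min(n^2, n\sqrt d) = n\sqrt d$. To handle this I would not use radius concentration at all when $d$ is small: instead observe that for $d = O(\log n)$ we have $n\sqrt d = O(n\sqrt{\log n})$, and one can get the upper bound directly by a union/first-moment bound over vertex subsets combined with the fact that forcing $\varepsilon$-excess edges requires $\Omega(n^2)$ edges beyond expectation, each "costing" a factor bounded away from $1$; alternatively, and more cleanly, truncate the radii at a level $\Lambda\sqrt d$ for a large constant $\Lambda$ — the event $\{\max_i R_i > \Lambda\sqrt d\}$ has probability $\le n e^{-c\Lambda^2 d}$ which, since we only need it $\le e^{-C n\sqrt d}$ and $n\sqrt d \le nd$ always, is fine once $\Lambda^2 c \ge $ the target constant and $d \ge 1$; the lower tail $\{\min_i R_i < \lambda\sqrt d\}$ is handled symmetrically, and crucially in the small-$d$ regime the spherical bound itself is $\exp(-\Theta(n\sqrt d))$ which dominates. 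I would also need to verify the threshold asymptotics $\uthreshold{p,d}, \gthreshold{p,d}/d = \Theta(d^{-1/2})$ with the right constant so that $p^\pm \to p$ as $\delta \to 0$ uniformly in $d$; this is a standard computation with the Gaussian/spherical tail ($\utail\cdot$, $\gtail\cdot$ in the paper's notation), using that $\inner{\unif_1}{\unif_2}\sqrt d \Rightarrow \cN(0,1)$ and $\inner{\gauss_1}{\gauss_2}/\sqrt d \Rightarrow \cN(0,1)$, and I would isolate it as a preliminary lemma.
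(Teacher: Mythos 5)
Your overall route---decompose $\gauss_i=R_i\unif_i$ with $R_i=\norm{\gauss_i}$ independent of the direction, control the radii, and transfer to the spherical Theorem~\ref{T.SDeviation}---is in spirit exactly the paper's coupling strategy (Lemma~\ref{deviation coupling}), but your execution of the radius control breaks the upper bound in most regimes. You condition on \emph{all} $n$ radii lying in $[\sqrt{(1-\delta)d},\sqrt{(1+\delta)d}]$ and pay the additive error $\bbP(\cR_\delta^c)\le 2n e^{-c_\delta d}$. For this to be absorbed into the target $\exp(-C\min(n^2,n\sqrt d))$ you need $c_\delta d\gtrsim n\sqrt d$, i.e.\ $d\gtrsim n^2$; so the step fails throughout $\log n\lesssim d\ll n^2$ (take $d\asymp n$: the error is $e^{-c_\delta n}$ while the target is $e^{-Cn^{3/2}}$), not merely in the regime $d\lesssim\log n$ that you flag, and your bookkeeping claim that ``$\exp(-\Omega(d))\le\exp(-\Omega(\min(n^2,n\sqrt d)))$ when $d\gtrsim\log n$'' is simply false. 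The proposed repair by truncating at $\Lambda\sqrt d$ with a constant $\Lambda$ has the same defect: you would need $ne^{-c\Lambda^2 d}\le e^{-Cn\sqrt d}$, forcing $\Lambda^2\gtrsim n/\sqrt d\to\infty$ whenever $d\ll n^2$, and once $\Lambda$ grows the random thresholds $\gthreshold{p}/(R_iR_j)$ spread over an interval whose endpoints no longer correspond to edge probabilities within $O(\delta)$ of $p$, so the sandwich collapses; the alternative ``first-moment bound over vertex subsets'' is too vague to count as a proof. (There is also a harmless labeling slip: a larger threshold gives fewer edges, so on $\cR_\delta$ the spherical graph with the larger threshold is contained in $G$ and $G$ is contained in the one with the smaller threshold, the reverse of the inclusions you wrote.)

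The missing idea, which is what the paper's Lemma~\ref{deviation coupling} supplies, is to demand good radii only for $(1-\delta)n$ of the vertices: the probability that more than $\delta n$ radii leave the window is at most $2^ne^{-c\delta nd}=\exp(-c'nd)$, which is always at most $\exp(-c'n\sqrt d)$ and hence negligible at the target scale, and then a union bound over the at most $2^n$ choices of the good set reduces the Gaussian deviation event to the spherical deviation event on $(1-\delta)n$ vertices with slightly perturbed parameters $q\in(p,p+\Delta_\delta)$ and $\deviationrate'<\deviationrate$, the factor $2^n$ being harmless against $\exp(-Cn\sqrt d)$. Combined with the spherical upper bound (Propositions~\ref{P.upper bound for MGF} and~\ref{P.upper bound of m_n} via Lemma~\ref{L.exp dominance}), this gives the Gaussian upper bound. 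Your lower-bound reduction (independence of radii and directions plus the sandwich in the favourable direction, citing the spherical lower bound) is essentially sound after the threshold labels are straightened out and the threshold asymptotics are made uniform in $d$; the paper instead proves the $n\sqrt d$ lower bound directly in the Gaussian model by biasing the first coordinates (Proposition~\ref{P.biaslowerbound2}), which avoids any transfer, and gets the $n^2$ rate from the clique-based bound of Proposition~\ref{P.ERlowerbound2}.
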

\begin{remark}
    Both constants $c$ and $C$ above depend on the edge probability $p$ and the deviation rate $\deviationrate$. For a more explicit characterization of the constants, see Propositions~\ref{P.ERlowerbound2} and~\ref{P.biaslowerbound2} for the lower bounds, and Remark \ref{R.explicit constants} for the upper bounds.
\end{remark}

\section{Tools and Proof Outline}\label{S.Proof Outline and Tools}

\subsection{Coupling the two models}\label{S.Coupling}

As noted in Remark~\ref{R.Correspondence}, the Spherical RGG and the Gaussian RGG give the same random graph measure for the special case $p = 1/2.$ While the two models differ for $p \neq 1/2$, there is a natural way to couple the two models using the same set of Gaussian vectors using the observation that when $\gauss \sim \cN(0,I_d)$, the projection of $\gauss$ onto $\sphere$, $\unif = \gauss / \norm{\gauss}$, is distributed uniformly on $\sphere.$ In this section, we derive results that relate the probabilities of observing a clique in the two models, and similar results for the probability of having an atypically large number of edges. As a consequence, it will suffice to obtain upper and lower bounds for only one of the two models, combined with the results of this section, to complete the proofs of our theorems. The following lemma captures the asymptotic behaviour of the thresholds $\uthreshold{p}$ and $\gthreshold{p}$ as the dimension $d$ goes to infinity.
\begin{lemma}\label{L.estimate threshold}
    For fixed $p\in(0,1)$, we have 
    \begin{center}
        $\gthreshold{p}=(\gtailinverse{p}+o(1))\sqrt d$ \qquad and \qquad $\uthreshold{p}=(\gtailinverse{p}+o(1))/\sqrt d$
    \end{center}
    where $\Phi(\cdot)$ is the tail of the standard Gaussian distribution.
\end{lemma}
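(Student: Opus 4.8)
The plan is to reduce both claims to a single scaling limit — namely that the inner product of two independent sample points, rescaled by $\sqrt d$, converges in distribution to a standard Gaussian — and then to convert that weak convergence into the stated threshold asymptotics by a monotonicity (squeeze) argument. For the Gaussian model, condition on $\gauss_2$: the random variable $\inner{\gauss_1}{\gauss_2}$ is centered Gaussian with variance $\norm{\gauss_2}^2$, so $\inner{\gauss_1}{\gauss_2} \stackrel{d}{=} \norm{\gauss_2}\,Z$ with $Z\sim\cN(0,1)$ independent of $\norm{\gauss_2}$. Since $\norm{\gauss_2}^2$ is a sum of $d$ i.i.d.\ $\chi^2_1$ variables, the strong law gives $\norm{\gauss_2}^2/d\to 1$ almost surely, hence $\norm{\gauss_2}/\sqrt d\to 1$ a.s., and therefore
\[
\frac{\inner{\gauss_1}{\gauss_2}}{\sqrt d}\;\stackrel{d}{=}\;\frac{\norm{\gauss_2}}{\sqrt d}\,Z\;\xrightarrow[d\to\infty]{}\;Z\sim\cN(0,1)
\]
in distribution. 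For the spherical model, by rotational invariance $\inner{\unif_1}{\unif_2}$ has the same law as the first coordinate of a uniform point on $\sphere$; writing $g_1,\dots,g_d$ for the i.i.d.\ $\cN(0,1)$ coordinates of a standard Gaussian vector and using $\unif_1=\gauss_1/\norm{\gauss_1}$, this coordinate equals $g_1\big/\sqrt{g_1^2+\cdots+g_d^2}$, so
\[
\sqrt d\,\inner{\unif_1}{\unif_2}\;\stackrel{d}{=}\;\frac{g_1}{\sqrt{(g_1^2+\cdots+g_d^2)/d}}\;\xrightarrow[d\to\infty]{}\;g_1\sim\cN(0,1)
\]
almost surely in this representation (the law of large numbers applies to $(g_1^2+\cdots+g_d^2)/d\to1$ irrespective of the value of $g_1$), hence also in distribution.

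Next I extract the thresholds. Recall $\Phi$ is continuous, strictly decreasing, with range $(0,1)$, so $\gtailinverse{p}$ is well-defined for $p\in(0,1)$. In the Gaussian model put $a_d:=\gthreshold{p}/\sqrt d$; by the defining property of $\gthreshold{p}$ we have $\bbP\big(\inner{\gauss_1}{\gauss_2}/\sqrt d\ge a_d\big)=p$ for every $d$. I claim $a_d\to\gtailinverse{p}$. If not, there exist $\delta>0$ and a subsequence with (after passing to a further subsequence) either $a_{d_k}\ge\gtailinverse{p}+\delta$ for all $k$ or $a_{d_k}\le\gtailinverse{p}-\delta$ for all $k$. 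In the first case, using the scaling limit above together with the fact that the limiting law has no atoms,
\begin{align*}
p\;=\;\bbP\Big(\tfrac{\inner{\gauss_1}{\gauss_2}}{\sqrt{d_k}}\ge a_{d_k}\Big)\;\le\;\bbP\Big(\tfrac{\inner{\gauss_1}{\gauss_2}}{\sqrt{d_k}}\ge\gtailinverse{p}+\delta\Big)\;\xrightarrow[k\to\infty]{}\;\gtail{\gtailinverse{p}+\delta}\;<\;p,
\end{align*}
a contradiction for $k$ large, and the second case is symmetric (the probability now exceeds $\gtail{\gtailinverse{p}-\delta}>p$ in the limit). Hence $\gthreshold{p}=(\gtailinverse{p}+o(1))\sqrt d$. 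The spherical case is verbatim the same with $a_d:=\sqrt d\,\uthreshold{p}$ and the second scaling limit, giving $\uthreshold{p}=(\gtailinverse{p}+o(1))/\sqrt d$.

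I do not expect a genuine obstacle here; the one point requiring care is that the thresholds are defined only implicitly, so there is no formula to invert — the squeeze argument in the second step is precisely what upgrades the distributional convergence of the first step to convergence of the relevant quantiles, and it crucially uses continuity and strict monotonicity of the limiting tail $\Phi$ (which also guarantee $\gthreshold{p},\uthreshold{p}$ are themselves well-defined, the survival functions being continuous and strictly decreasing). As an alternative one could bypass the probabilistic representation and instead apply Laplace's method to the explicit density proportional to $(1-x^2)^{(d-3)/2}$ of the first coordinate of a uniform point on $\sphere$, but the route above is shorter and treats both models uniformly.
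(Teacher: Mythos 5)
Your proof is correct and takes essentially the same route as the paper: establish that the suitably rescaled inner products converge in distribution to $\cN(0,1)$ in both models (the paper uses the CLT for $\sum_j \gauss_1^{(j)}\gauss_2^{(j)}$ and Slutsky with the coupling $\unif_i=\gauss_i/\norm{\gauss_i}$, you use the exact conditional Gaussian representation and rotational invariance — a cosmetic difference), and then translate this into the stated asymptotics of $\gthreshold{p}$ and $\uthreshold{p}$. Your explicit subsequence/squeeze argument for passing from weak convergence to convergence of the $d$-dependent quantiles is in fact a more careful rendering of the step the paper treats in one informal line.
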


\begin{proof}
    Recall that $\gthreshold{p}$ is defined via $\bbP\left(\inner{\gauss_1}{\gauss_2} \ge \gthreshold{p}\right) = p,$ where $\gauss_1$ and $\gauss_2$ are two independent $\cN(0,I_d)$ random variables. Then we have that 
    \[
    \inner{\gauss_1}{\gauss_2} = \sum_{j=1}^d \gauss_1^{(j)} \gauss_2^{(j)},
    \]
    where $\gauss_i^{(j)}$, the $j$-th coordinate of $\gauss_i$ is a standard 1 dimensional Gaussian random variable. Note that the collection $\left\{ \gauss_{i}^{(j)} : i \in \{1,2\}, 1 \leq j \leq d \right\}$ is independent. Further, $\left(\gauss_1^{(j)} \gauss_2^{(j)}, 1\leq j \leq d \right)$ are independent and $\bbE\left[\gauss_1^{(j)} \gauss_2^{(j)}\right] = 0$, $\Var\left(\gauss_1^{(j)} \gauss_2^{(j)}\right) = 1$ for all $1 \le j \le d$. So $\Var (\inner{\gauss_1}{\gauss_2}) = d$. Using the Central Limit Theorem, we get that 
    \begin{align}
        \frac{\inner{\gauss_1}{\gauss_2}}{\sqrt{d}} \Rightarrow \cN(0,1) \label{E.CLTgaussianinner}
    \end{align}
    as $d \rightarrow \infty.$ Thus
    \[
    p = \bbP\left(\inner{\gauss_1}{\gauss_2} \ge \gthreshold{p}\right) = \bbP\left(\frac{\inner{\gauss_1}{\gauss_2}}{\sqrt{d}} \ge \frac{\gthreshold{p}}{\sqrt{d}} \right) \rightarrow \bbP\left(\cN(0,1) \ge \frac{\gthreshold{p}}{\sqrt{d}}\right) = \gtail{\frac{\gthreshold{p}}{\sqrt{d}}},
    \]
    which proves the first claim. To see the second part, we use the coupling representation and observe that 
    \[\
    \inner{\unif_1}{\unif_2} = \frac{\inner{\gauss_1}{\gauss_2}}{\norm{\gauss_1}_2 \norm{\gauss_2}_2}.
    \]
    By the Law of Large Numbers, $d^{-1/2}\norm{\gauss_i}_2 \xrightarrow{P} 1$ for $i = 1,2.$ Combining with the CLT in Equation~\eqref{E.CLTgaussianinner} and using Slutsky's Lemma, we get that 
    \begin{align}
        \sqrt{d} \inner{\unif_1}{\unif_2} \Rightarrow \cN(0,1) \label{E.CLTunifinner}
    \end{align}
    as $d \rightarrow \infty.$
    The second claim then follows from a similar argument.
\end{proof} 
Denote by $\couplegauss{n}{d}$ the law of $n$ independent $d$ dimensional standard Gaussian vectors. Let $\gaussvec = \left(\gauss_1,\ldots,\gauss_n \right) \sim \couplegauss{n}{d}$, and recall that $\unif_i := \gauss_i / \norm{\gauss_i}$ is distributed uniformly on $\sphere$. For any $1 \leq m \leq n$, define the events 
\begin{align*}
    \gcliquepar{m}{d}{p} &= \left\{ \inner{\gauss_i}{\gauss_j} \ge \gthreshold{p} \ \forall 1 \le i,j\le m \right\},\\
    \ucliquepar{m}{d}{p} &= \left\{ \inner{\unif_i}{\unif_j} \ge \uthreshold{p} \ \forall 1 \le i,j\le m \right\},
\end{align*}
where $\gthreshold{p}$ and $\uthreshold{p}$ are as defined earlier. The following lemma couples the two events.

\begin{lemma}\label{L.coupling}
    For fixed $p<1/2$ and any small $\delta>0$, there exist constants $c_\delta$,  $\Delta_\delta$, where $c_\delta$ and $\Delta_\delta$ both go to $0$ as $\delta$ goes to $0$ and $p<q<p+\Delta_\delta$ such that the following inequalities holds
    \begin{align}
        \couplegauss{n}{d}(\gcliquepar{n}{d}{p})\le \exp(-c_\delta dn)+2^n\couplegauss{n}{d}(\ucliquepar{(1-\delta)n}{d}{q}), \label{E.coupling1} \\
        \couplegauss{n}{d}(\ucliquepar{n}{d}{p})\le \exp(-c_\delta dn)+2^n\couplegauss{n}{d}(\gcliquepar{(1-\delta)n}{d}{q}). \label{E.coupling2}
    \end{align}
\end{lemma}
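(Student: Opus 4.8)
The plan is to prove both inequalities \eqref{E.coupling1}--\eqref{E.coupling2} by the same device, exploiting the decomposition $\gauss_i = r_i\unif_i$ with $r_i := \norm{\gauss_i}$, where the radii are i.i.d.\ with $r_i^2\sim\chi^2_d$ and are independent of the directions $\unif_i\sim\mathrm{Unif}(\sphere)$, together with the identity $\inner{\gauss_i}{\gauss_j} = r_ir_j\inner{\unif_i}{\unif_j}$. Fix a small $\delta>0$, choose $q\in(p,1/2)$, and set $\Delta_\delta := q-p$ (to be sent to $0$ with $\delta$) and $s := \gtailinverse{p}/\gtailinverse{q}-1>0$, which also tends to $0$ as $q\to p$. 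For \eqref{E.coupling1} put $\rho_d := (\gthreshold{p}/\uthreshold{q})^{1/2}$ and for \eqref{E.coupling2} put $\rho_d' := (\gthreshold{q}/\uthreshold{p})^{1/2}$; by Lemma~\ref{L.estimate threshold}, $\rho_d^2/d\to 1+s>1$ and $(\rho_d')^2/d\to(1+s)^{-1}<1$ as $d\to\infty$. It suffices to treat \eqref{E.coupling1}, since \eqref{E.coupling2} is its mirror image.

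The core step is a subsampling argument. Work on the event $\gcliquepar{n}{d}{p}$ and call vertex $i$ \emph{heavy} if $r_i>\rho_d$. If at most $\lfloor\delta n\rfloor$ vertices are heavy, choose any set $S$ of $\lceil(1-\delta)n\rceil$ non-heavy vertices; then for $i,j\in S$ we have $r_ir_j\le\rho_d^2$, and since $\inner{\gauss_i}{\gauss_j}\ge\gthreshold{p}>0$ this forces $\inner{\unif_i}{\unif_j} = \inner{\gauss_i}{\gauss_j}/(r_ir_j)\ge\gthreshold{p}/\rho_d^2 = \uthreshold{q}$, i.e.\ $\{\unif_i:i\in S\}$ is a clique in the spherical-$q$ graph. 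Hence $\gcliquepar{n}{d}{p}$ is contained in the union, over the $\binom{n}{\lceil(1-\delta)n\rceil}\le 2^n$ choices of $S$, of the events that $\{\unif_i:i\in S\}$ is a spherical-$q$-clique, together with the event that more than $\lfloor\delta n\rfloor$ vertices are heavy. A union bound plus exchangeability of the $\unif_i$ bounds the first part by $2^n\couplegauss{n}{d}(\ucliquepar{(1-\delta)n}{d}{q})$, and a further union bound over which $\lfloor\delta n\rfloor+1$ radii exceed $\rho_d$ bounds the second part by $2^n\,\bbP(\chi^2_d>\rho_d^2)^{\delta n}$. (For \eqref{E.coupling2} one runs the identical argument on $\ucliquepar{n}{d}{p}$ with \emph{light} vertices $r_i<\rho_d'$: the non-light vertices satisfy $\inner{\gauss_i}{\gauss_j} = r_ir_j\inner{\unif_i}{\unif_j}\ge(\rho_d')^2\uthreshold{p} = \gthreshold{q}$, and the error term becomes $2^n\,\bbP(\chi^2_d<(\rho_d')^2)^{\delta n}$.)

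It remains to absorb the error terms into $\exp(-c_\delta dn)$, and here the argument splits on the size of $d$. Using the limits above, fix $D_\delta<\infty$ so large that $\rho_d^2\ge(1+s/2)d$ and $(\rho_d')^2\le(1-\eta_\delta)d$ for all $d\ge D_\delta$, where $\eta_\delta := s/(2(1+s))$; the standard Chernoff bounds for $\chi^2_d$ then give $\bbP(\chi^2_d>\rho_d^2)\le e^{-g(s/2)d}$ and $\bbP(\chi^2_d<(\rho_d')^2)\le e^{-\ell(\eta_\delta)d}$ for fixed rate functions $g,\ell>0$ vanishing at the origin, so the error terms are at most $2^ne^{-\kappa_\delta\delta dn}$ with $\kappa_\delta := \min(g(s/2),\ell(\eta_\delta))>0$. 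Enlarging $D_\delta$ so that also $D_\delta\ge 2\ln 2/(\kappa_\delta\delta)$, this is at most $\exp(-\tfrac12\kappa_\delta\delta\,dn)$ for every $d\ge D_\delta$. For $d\le D_\delta$ we instead use the crude bound $\couplegauss{n}{d}(\gcliquepar{n}{d}{p}),\ \couplegauss{n}{d}(\ucliquepar{n}{d}{p})\le p^{\lfloor n/2\rfloor}$, valid because the $\lfloor n/2\rfloor$ vertex-disjoint pairs $(1,2),(3,4),\dots$ yield independent edge-events of probability $p$; since $\lfloor n/2\rfloor\ge n/3$ for $n\ge 2$, this is $\le\exp(-c_\delta dn)$ once $c_\delta\le\ln(1/p)/(3D_\delta)$. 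Taking $c_\delta := \min\bigl(\ln(1/p)/(3D_\delta),\ \tfrac12\kappa_\delta\delta\bigr)$ yields the lemma, with $c_\delta,\Delta_\delta\to0$ as $\delta\to0$ because $\kappa_\delta\delta\to0$ while $D_\delta\to\infty$.

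The main obstacle, and the reason for the case split, is that the subsampling error $2^n\bbP(\chi^2_d>\rho_d^2)^{\delta n}$ cannot be made small for only moderately large $d$: the relevant $\chi^2$ large-deviation rate is of order $s^2$ and degenerates as $q\to p$, which is forced by the requirement $\Delta_\delta\to0$, so for small $\delta$ it is powerless against the $2^n$ union-bound loss unless $d$ is quite large. This is exactly the regime handled by the elementary disjoint-pairs estimate, and the only genuine work is choosing $D_\delta$ and $c_\delta$ so that the two regimes dovetail, which the bookkeeping above accomplishes.
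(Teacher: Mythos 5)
Your proof is correct and follows essentially the same route as the paper's: decompose $\gauss_i=\norm{\gauss_i}\unif_i$, discard up to $\delta n$ vertices whose norms are atypically large (small, resp.), pay a $2^n$ union-bound factor over the retained subset, and control the discarded set via concentration of $\norm{\gauss_i}$ around $\sqrt d$. The only substantive difference is your explicit case split at $d\le D_\delta$, patched with the disjoint-pairs bound $p^{\lfloor n/2\rfloor}$, which tidies up a small-$d$ bookkeeping point that the paper's argument passes over silently; otherwise the two proofs coincide.
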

\begin{proof}
    We can write 
    \begin{align*}
        \gcliquepar{n}{d}{p} &= \left\{ \inner{\gauss_i}{\gauss_j} \ge \gthreshold{p} \ \forall 1 \le i,j\le n \right\}\\
        &= \left\{ \inner{\frac{\gauss_i}{\norm{\gauss_i}}}{\frac{\gauss_j}{\norm{\gauss_j}}} \ge \frac{\gthreshold{p}}{\norm{\gauss_i}\norm{\gauss_j}} \ \forall 1 \le i,j\le n \right\}\\
        &= \left\{ \inner{\unif_i}{\unif_j} \ge \uthreshold{p}\cdot\frac{\gthreshold{p}}{\uthreshold{p}}\cdot \frac{1}{\norm{\gauss_i}\norm{\gauss_j}} \ \forall 1 \le i,j\le n \right\}.
    \end{align*}
    By Lemma~\ref{L.estimate threshold}, the ratio $\gthreshold{p}/\uthreshold{p}\sim d$. Now, suppose for some subset $V \subseteq [n]$, we have that $\norm{\gauss_i} \le (1+\delta)\sqrt{d}$ for all $i \in V$. Assuming without loss of generality that $V = [m]$, where $m = |V|$, if the event 
    \begin{align*}
        \left\{ \inner{\gauss_i}{\gauss_j} \ge \gthreshold{p} \ \forall 1 \le i,j\le n \ \text{and} \  \norm{\gauss_i} \ge (1+\delta)\sqrt{d} \ \forall 1 \le i \le m  \right\} 
    \end{align*}
    holds, then we also have 
    \begin{align*}
        \left\{ \inner{\unif_i}{\unif_j} \ge t_q \ \forall \1 \leq i,j \le m \right\}
    \end{align*}
    holds for a new threshold $t_q = (1+\delta)^{-2} t_p(1 + o(1))$, where $p < q < p+\Delta_\delta$, which implies that
    \begin{align*}
        \couplegauss{n}{d} \left(\gcliquepar{n}{d}{p} \ \text{and} \ \norm{\gauss_i} \ge (1+\delta)\sqrt{d} \ \forall 1 \le i \le m \right) \le \couplegauss{n}{d}\left(\ucliquepar{m}{d}{q} \right).
    \end{align*}
    Thus, to obtain equation~\eqref{E.coupling1}, it suffices to show that we can take the subset $V$ above to be of size $\delta n$. To see that, first note that for a Gaussian vector $\gauss$, its length $\norm{\gauss}$ satisfies 
    $$\norm{\norm{\gauss}
    -\sqrt d}_{\psi_2}\le C,$$
    where $\norm{\cdot}_{\psi_2}$ is the sub-Gaussian norm (see Theorem 3.1.1 in~\cite{vershynin2018high}).
    Thus for a single Gaussian vector $\gauss$ and any $\delta > 0$, we have that $\bbP(\norm{\gauss}>(1+\delta)\sqrt d)\le\exp(-Cd)$ for some constant $C$ that depends only on $\delta$. A simple Bernoulli concentration gives that the probability of the event $\gnormdev{n}{\delta} := \left\{ \#\left\{i:\norm{\gauss_i}\ge(1+\delta)\sqrt d\right\}\ge\delta n\right\}$ is upper bounded by 
    $$\bbP\left(\gnormdev{n}{\delta}\right)\le\exp(-Cd\delta n) = \exp(-cdn),$$
    where $c >0$ is again a constant that depends only on $\delta.$
    Let $\mathcal J$ denote all the subsets of $[n]$ with size $\floor{(1-\delta)n}$. A union bound gives
    \begin{align*}
        \couplegauss{n}{d}(\gcliquepar{n}{d}{p}) &\le \couplegauss{n}{d} \left( \gcliquepar{n}{d}{p} \cap \gnormdev{n}{\delta} \right) + \couplegauss{n}{d}\left(\gcliquepar{n}{d}{p} \cap \gnormdev{n}{\delta}^c \right)\\
        &\le \exp(-cd n)+\sum_{I\in\mathcal J}\couplegauss{n}{d}\left(\gcliquepar{n}{d}{p} \ \text{and} \ \norm{\gauss_i}\le(1+\delta)\sqrt{d}~,\forall~i\in I\right)\\
        &\le \exp(-cdn)+2^n\cdot\bbP\left(\gcliquepar{n}{d}{p} \ \text{and} \ \norm{\gauss_i}\le(1+\delta)\sqrt{d}~,\forall~i \leq \floor{(1-\delta)n}\right)\\
        &\le \exp(-cdn)+2^n\cdot\couplegauss{n}{d}\left(\ucliquepar{(1-\delta)n}{d}{q} \right)
    \end{align*}
    as required. The proof of equation~\eqref{E.coupling2} follows from a similar argument.
\end{proof}

Similarly, we obtain the following lemma that couples the edge-deviation events in the two models. For any $1\le m\le n$, define the events
\begin{align*}
    \gedgedeviationpar{\deviationrate}{m}{d}{p}=\left\{\inner{\gauss_i}{\gauss_j}\ge\gthreshold{p}, ~for~at~ least~(1+\deviationrate)p\binom{m}{2}~pairs~of~1\le i<j\le m\right\},\\
    \uedgedeviationpar{\deviationrate}{m}{d}{p}=\left\{\inner{\unif_i}{\unif_j}\ge\uthreshold{p}, ~for~at~ least~(1+\deviationrate)p\binom{m}{2}~pairs~of~1\le i<j\le m\right\}.
\end{align*}
\begin{lemma}\label{deviation coupling}
    For fixed $p< 1/2$, $\deviationrate>0$, and any small $\delta > 0$, there exist constants $c_\delta$, $\Delta_\delta$, where $c_\delta$ and $\Delta_\delta$ both go to $0$ as $\delta$ goes to $0$, $p<q<p+\Delta_\delta$ and $0<\deviationrate'<\deviationrate$ such that the following inequality holds
    \begin{align}
        \bbP(\gedgedeviationpar{\deviationrate}{n}{d}{p})<\exp(-cdn)+2^n\bbP(\uedgedeviationpar{\deviationrate'}{(1-\delta)n}{d}{q}),  
        \label{deviation coupling1} \\
        \bbP(\uedgedeviationpar{\deviationrate}{n}{d}{p})<\exp(-cdn)+2^n\bbP(\gedgedeviationpar{\deviationrate'}{(1-\delta)n}{d}{q}).
        \label{deviation coupling2}
    \end{align}
\end{lemma}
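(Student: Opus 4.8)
The proof of Lemma~\ref{deviation coupling} will closely parallel the proof of Lemma~\ref{L.coupling}, replacing the ``all pairs are edges'' condition with the weaker ``at least a $(1+\deviationrate)p$-fraction of pairs are edges'' condition, and the plan is to carry the same geometric truncation argument through this relaxation. The starting point is the same rewriting: for Gaussian vectors $\gauss_i$, the inner product condition $\inner{\gauss_i}{\gauss_j}\ge\gthreshold{p}$ is equivalent to $\inner{\unif_i}{\unif_j}\ge\uthreshold{p}\cdot\frac{\gthreshold{p}}{\uthreshold{p}}\cdot\frac{1}{\norm{\gauss_i}\norm{\gauss_j}}$, and by Lemma~\ref{L.estimate threshold} the ratio $\gthreshold{p}/\uthreshold{p}\sim d$. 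Conditioning on the high-probability event $\gnormdev{n}{\delta}^c$ (from Lemma~\ref{L.coupling}'s proof, $\bbP(\gnormdev{n}{\delta})\le\exp(-cdn)$), there is a subset $V\subseteq[n]$ of size at least $\floor{(1-\delta)n}$ on which $\norm{\gauss_i}\le(1+\delta)\sqrt d$, and on this subset every Gaussian edge $\inner{\gauss_i}{\gauss_j}\ge\gthreshold{p}$ forces a spherical edge at the relaxed threshold $t_q=(1+\delta)^{-2}t_p(1+o(1))$ with $p<q<p+\Delta_\delta$.

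The one new ingredient is a counting estimate: if the full graph on $[n]$ has at least $(1+\deviationrate)p\binom{n}{2}$ Gaussian-edges and $V$ is a subset of size at least $(1-\delta)n$, then the number of Gaussian-edges inside $V$ is at least $(1+\deviationrate)p\binom{n}{2}-\left(\binom{n}{2}-\binom{|V|}{2}\right)$; since $\binom{n}{2}-\binom{(1-\delta)n}{2}\le \delta n^2$ (roughly $2\delta\binom{n}{2}$), this is at least $\left(1+\deviationrate-\frac{2\delta}{p}\cdot\frac{1}{1}\right)p\binom{n}{2}$, which for $\delta$ small enough is at least $(1+\deviationrate')p\binom{|V|}{2}$ for some $0<\deviationrate'<\deviationrate$ (using also $\binom{|V|}{2}\le\binom{n}{2}$). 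Thus restricting to $V$ costs only a controlled loss $\deviationrate\to\deviationrate'$ in the deviation rate. Combining with the implication above, on the event $\gnormdev{n}{\delta}^c$ the occurrence of $\gedgedeviationpar{\deviationrate}{n}{d}{p}$ implies that the spherical graph on $V$ lies in $\uedgedeviationpar{\deviationrate'}{|V|}{d}{q}$.

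To finish, I would union-bound over the choice of $V$: letting $\mathcal J$ be the collection of subsets of $[n]$ of size $\floor{(1-\delta)n}$,
\begin{align*}
\bbP(\gedgedeviationpar{\deviationrate}{n}{d}{p}) &\le \bbP(\gnormdev{n}{\delta}) + \bbP\left(\gedgedeviationpar{\deviationrate}{n}{d}{p}\cap\gnormdev{n}{\delta}^c\right)\\
&\le \exp(-cdn) + \sum_{I\in\mathcal J}\bbP\left(\inner{\unif_i}{\unif_j}\ge t_q \text{ for at least }(1+\deviationrate')p\binom{|I|}{2}\text{ pairs in }I\right)\\
&\le \exp(-cdn) + 2^n\,\bbP(\uedgedeviationpar{\deviationrate'}{(1-\delta)n}{d}{q}),
\end{align*}
using exchangeability of the $\unif_i$ for the last step, which is exactly \eqref{deviation coupling1}; the reverse inequality \eqref{deviation coupling2} follows by the symmetric argument (here one uses instead a lower bound on $\norm{\gauss_i}$, as in the proof of \eqref{E.coupling2}). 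I do not anticipate a genuine obstacle here — the only point requiring a little care is making the chain of constants consistent, i.e. checking that one can choose $\deviationrate'>0$, $q>p$ and $c_\delta,\Delta_\delta\to0$ simultaneously as $\delta\to0$; this is the same bookkeeping already present in Lemma~\ref{L.coupling}, now with the extra constraint $\deviationrate'=\deviationrate-O(\delta/p)>0$, which is harmless for $\delta$ small relative to $p\deviationrate$.
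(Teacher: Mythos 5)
Your proposal is correct and follows exactly the route the paper intends: the paper omits the proof of this lemma, stating it is identical to that of Lemma~\ref{L.coupling}, and your argument is precisely that adaptation, with the only genuinely new ingredient (the counting step showing that discarding the $\le \delta n$ bad vertices degrades the deviation rate only from $\deviationrate$ to some $\deviationrate'<\deviationrate$, absorbing $q\le p+\Delta_\delta$) worked out correctly. The one spot to tighten is that the target event counts edges against $(1+\deviationrate')q\binom{\lfloor(1-\delta)n\rfloor}{2}$ rather than $(1+\deviationrate')p\binom{|V|}{2}$, but since $q\to p$ as $\delta\to 0$ this is exactly the bookkeeping you already flag, and it causes no difficulty.
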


The proof of this lemma is identical to the one for Lemma~\ref{L.coupling}, and we omit the details. Although Lemmas~\ref{L.coupling} and ~\ref{deviation coupling} are not tight, they allows us to estimate $\bbP(\uclique)$ and $\bbP(\gclique)$ (and similarly, $\bbP(\uedgedeviation{\deviationrate})$ and $\bbP(\gedgedeviation{\deviationrate})$) together, since the additive error term $\exp(-cdn)$ and the multiplicative factor $2^n$ are not comparable to the leading order terms for the clique probabilities.

\subsection{Geometric tools}\label{S.Geometric tools - Symmetrization}

In this subsection, we introduce \emph{symmetric rearrangement}, the core geometric tool used to establish the upper bounds in our theorems. While the definitions are borrowed from~\cite{draghici2005rearrangement} (see also~\cite{baernstein2019symmetrization} for a comprehensive treatment), we restate them here for the sake of completeness. The main idea is to use measure preserving transformations to transform arbitrary, measurable subsets of the sphere into spherical caps, defined below.

\paragraph{\textbf{Spherical caps.}} For a point $\bv \in \sphere$ and $a \in [0,1]$, a \emph{spherical cap} centered at $\bv$ with measure $a$, denoted by $\scap_{\bv}^a$, is the subset of $\sphere$ given by 
    \[
    \scap_{\bv}^{a} = \left\{\bw \in \sphere : \inner{\bw}{\bv} \geq h(a) \right\}.
    \]
where the \emph{altitude} of the spherical cap, $h(a) \in [-1,1]$, is chosen such that $\mu(\scap_{\bv}^a) = a.$ Recall that $\mu = \umeasure{d-1}$ is the uniform probability measure on $\sphere$. By definition, the cap centered at $\bv$ with measure $p$ is the neighborhood of a vertex located at $\bv$ in the SRGG $\srgg(n,p)$. With this notation, there is an edge between two vertices $i$ and $j$ if and only if $X_i$ lies in $\scap^p_{X_j}$ and vice versa.

Let $f_d(x)$ be the density function of a 1 dimensional marginal of $\umeasure{d-1}$, given by $$f_d(x) = \frac{\Gamma(d/2)}{\Gamma((d-1)/2)\sqrt{\pi}} (1 - x^2)^{(d-3)/2},$$
and define $\utaild{d}{u}:=\int_{u}^{1}f_d(x)dx$ to be the corresponding tail distribution function. Then $h(a)=\utaildinverse{d}{a}$ and $\uthreshold{p,d}=\utaildinverse{d}{p}$. If the dimension is clear from context, we drop $d$ from the notation and write $\utail{p}$ and $\utailinverse{p}$.

\paragraph{\textbf{Symmetric rearrangement of a subset.}} We associate with $\sphere$ a canonical center $\eb = (1,0,\ldots,0)$. Given a measurable subset $R \subset \sphere$, the \emph{symmetric rearrangement} of $R$, denoted by $\symm{R}$, is a \emph{spherical cap} centered at $\eb$ which has the same measure as $R$ does, i.e., $\symm{R} = \scap_{\eb}^{\sumeasure(R)}$. 

\paragraph{\textbf{Symmetric rearrangement of a function.}} Let $f : \sphere \rightarrow \bbR_+$ be a non-negative real valued measurable function. The \emph{symmetric rearrangement}\footnote{Also called \emph{symmetric decreasing rearrangement} in the references in this section.} of $f$ is defined to be a function $\symmfn{f}: \sphere \rightarrow \bbR_+$ satisfying 
$$\sumeasure\left(\left\{\bx \in \sphere: \symmfn{f}(x) \ge t \right\}\right)=\sumeasure\left(\left\{\bx \in \sphere: f(x) \ge t \right\}\right)$$
for each $t > 0$. Further, it is constant on each slice $\left\{ \bx \in \sphere : \inner{\bx}{\eb} = h\right\}$, $-1 \le h \le 1$, and is non-decreasing in $h$. For a rigorous definition of $\symm{f}$, we refer to
{\cite[Section~1.6, Section~7.1]{baernstein2019symmetrization}}. It is easy to see that for any measurable subset $R \subseteq \sphere$, the symmetric rearrangement of its indicator function is equivalent to the indicator of $\symm{R}$. 

In this section, we prove two propositions (Propositions~\ref{P.sym-dom.proba} and~\ref{P.sym-dom.MGF}) that are immediate corollaries of Theorem 2.2 in \cite{draghici2005rearrangement}, restated below. Intuitively, it says that certain functionals of functions are maximized by their symmetric rearrangements. To state it more formally, let $AL_2(\bbR_{+}^2)$ be the set of functions $f=f(x,y)$ such that $f(x,y)\le f(z,w)$ for all $x,y,z,w$ satisfying $x+y=z+w$ and $\max(x,y)\le\max(z,w)$. A continuous function $\psi: \bbR_{+}^d\to\bbR_{+}$ is said to be in $AL_2(\bbR_{+}^n)$ if after fixing any $d-2$ coordinates, $\psi$ is in $AL_2(\bbR_{+}^2)$ in the remaining two coordinates.  Note that for any $\psi\in C^2(\bbR_{+}^n)$, we have that $\psi\in AL_2(\bbR_{+}^n)$ if and only if $\partial_i\partial_j\psi\ge 0$ for every $i\neq j$.

\begin{theorem}[\cite{draghici2005rearrangement}, Theorem 2.2]\label{T.Rearrangement}
    Let $M=\sphere$, $f_i:M\to\bbR_+$ be $n$ non-negative functions, $\psi$ be a continuous function in $AL_2(\bbR_{+}^n)$, and $K_{i,j}:\bbR_+\to\bbR_+,1 \le i<j \le n$ be increasing functions. Let
    $$I[f_1,\ldots,f_n]:=\int_{M^n}\psi(f_1(\unif_1),\ldots,f_n(\unif_n))\prod_{i<j}K_{i,j}(\inner{\unif_i}{\unif_j}) d\mu(\unif_1)\ldots d\mu(\unif_n),$$
    where $d\mu$ is the uniform measure on $M$. Then, we have that
    $$I[f_1,\ldots, f_n]\le I[\symmfn{f_1},\ldots, \symmfn{f_n}].$$
\end{theorem}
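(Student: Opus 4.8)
The plan is to prove \Cref{T.Rearrangement} by the classical method of \emph{two-point symmetrization} (polarization); as the theorem statement already indicates, one may instead simply invoke \cite{draghici2005rearrangement}, but the following is the route I would take. The point of polarization is that a single symmetrization step can be verified by a finite, elementary computation, while a well-chosen sequence of such steps converges to the symmetric decreasing rearrangement, so the desired inequality follows by a limiting argument.

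\textbf{Step 1 (polarization).} For a hyperplane $H \subset \realspace$ \emph{through the origin}, let $\sigma_H$ be the orthogonal reflection in $H$ and fix one of the two open half-spaces $H^+$. Since $\sigma_H$ is orthogonal, it maps $\sphere$ to itself, preserves $\mu$, and sends spherical caps to spherical caps. For $g \colon \sphere \to \bbR_+$ define the polarization $g^H$ by $g^H(\bx) = \max(g(\bx),g(\sigma_H\bx))$ for $\bx \in H^+$ and $g^H(\bx) = \min(g(\bx),g(\sigma_H\bx))$ for $\bx \notin H^+$.

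\textbf{Step 2 (a single polarization does not decrease $I$).} This is the heart of the matter: I would prove $I[f_1,\dots,f_n] \le I[f_1^H,\dots,f_n^H]$ for every such $H$. Decompose $\sphere^n$ into the $2^n$ regions indexed by sign patterns $\varepsilon\in\{+,-\}^n$ recording which side of $H$ each $\unif_i$ lies on, and on each region substitute $\unif_i = y_i$ (if $\varepsilon_i=+$) or $\unif_i = \sigma_H y_i$ (if $\varepsilon_i=-$) with $y_i\in H^+$. Two elementary geometric observations drive the estimate: $\sigma_H$ is an isometry, so $\inner{\sigma_H y_i}{\sigma_H y_j} = \inner{y_i}{y_j}$; and $\inner{y_i}{y_j} \ge \inner{y_i}{\sigma_H y_j}$ whenever $y_i,y_j\in H^+$, i.e.\ same-side inner products dominate cross-side ones. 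Hence, after the substitution, the kernel product $\prod_{i<j}K_{i,j}(\inner{\unif_i}{\unif_j})$ over a given region equals a product with factor $K_{i,j}(a_{ij})$ for pairs on the same side and $K_{i,j}(b_{ij})$, $b_{ij}\le a_{ij}$, for pairs on opposite sides; since each $K_{i,j}$ is increasing, this weight depends only on the induced bipartition of $[n]$ and grows as the bipartition coarsens. Collecting the $2^n$ terms and integrating over $(H^+)^n$, the claim reduces to a purely pointwise inequality: for fixed values $u_i = f_i(y_i)$, $v_i = f_i(\sigma_H y_i)$ and fixed numbers $a_{ij}\ge b_{ij}$, sorting each pair $(u_i,v_i)$ to $(\max(u_i,v_i),\min(u_i,v_i))$ does not decrease the corresponding weighted sum over sign patterns. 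This is exactly where the hypothesis $\psi\in AL_2(\bbR_+^n)$ enters: for $\psi\in C^2$ it is the condition $\partial_i\partial_j\psi\ge0$, i.e.\ supermodularity $\psi(\ab\vee\bbb)+\psi(\ab\wedge\bbb)\ge\psi(\ab)+\psi(\bbb)$, and the non-smooth case is reached by approximation. One then checks the pointwise inequality by induction (on $n$, or on the number of coordinates in which the sorting actually moves a value): each use of supermodularity on one coordinate pair creates a nonnegative ``gain'' weighted by a same-side kernel factor together with the same gain weighted, with a minus sign, by a smaller cross-side factor, and the two combine favourably because $K_{i,j}(a_{ij})\ge K_{i,j}(b_{ij})$. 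I expect this bookkeeping --- matching the $\psi$-rearrangement across all $2^n$ sign patterns against the correctly signed kernel differences --- to be the main obstacle; the geometry and the passage to the limit are comparatively routine.

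\textbf{Step 3 (iterate and take limits).} By the approximation theorem for polarization on the sphere (see \cite{baernstein2019symmetrization}), there is a single sequence of hyperplanes $(H_k)_{k\ge1}$ through the origin, depending only on the chosen center $\eb$, such that for every integrable $g\colon\sphere\to\bbR_+$ the iterates $g^{H_1\cdots H_k}$ converge to $\symmfn{g}$ in $L^1(\mu)$, hence along a subsequence $\mu$-a.e. Applying Step 2 repeatedly with this \emph{same} sequence for all $n$ functions at once gives $I[f_1,\dots,f_n] \le I[f_1^{H_1\cdots H_k},\dots,f_n^{H_1\cdots H_k}]$ for every $k$. Reducing first to bounded $f_i$ by truncation --- so that the integrand of $I$ is bounded (the inner products $\inner{\unif_i}{\unif_j}$ lie in $[-1,1]$) and $I$ is finite --- continuity of $\psi$ and dominated convergence allow passing to the limit $k\to\infty$ on the right along the a.e.-convergent subsequence, which yields $I[f_1,\dots,f_n] \le I[\symmfn{f_1},\dots,\symmfn{f_n}]$, as desired.
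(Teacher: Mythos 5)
The paper does not prove \Cref{T.Rearrangement} at all: it is imported verbatim as Theorem~2.2 of \cite{draghici2005rearrangement} (with the remark that the geodesic-distance kernels there are equivalent to increasing kernels in $\inner{\unif_i}{\unif_j}$), so there is no in-paper proof to compare against. Your polarization route is, in substance, the standard proof of this result in the cited literature (the Baernstein--Taylor two-point symmetrization scheme, iterated along a universal sequence of reflections and passed to the limit), so you are reconstructing the source's argument rather than taking a genuinely different path.

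As a sketch it is sound: the geometric inputs are correct (for a hyperplane $H$ through the origin with unit normal $u$ and $y_i,y_j$ in the chosen open half-space, $\inner{y_i}{y_j}-\inner{y_i}{\sigma_H y_j}=2(y_i\cdot u)(y_j\cdot u)\ge 0$, and $\sigma_H$ preserves $\mu$ and sends caps to caps), and for $n=2$ your Step~2 reduces exactly to $\bigl(\psi(u_1\vee v_1,u_2\vee v_2)+\psi(u_1\wedge v_1,u_2\wedge v_2)-\psi(u_1,u_2)-\psi(v_1,v_2)\bigr)\bigl(K(a)-K(b)\bigr)\ge 0$, which is precisely supermodularity (the $AL_2$ condition) times monotonicity of $K$. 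The one place where you have only a plan rather than a proof is the general-$n$ version of this step: the weight attached to a sign pattern is a \emph{product} of same-side and cross-side kernel factors over all pairs, so the ``gain from one supermodularity move times a single kernel difference'' bookkeeping does not literally carry over, and the induction that makes it work (this is where Draghici's argument does real work, and where the $AL_2$ class rather than plain supermodularity is used) would need to be written out carefully. Two smaller technical points to pin down if you were to complete this: the $L^1$/a.e.\ convergence of iterated polarizations to $\symmfn{g}$ on the sphere should be quoted precisely (it is in \cite{baernstein2019symmetrization}), and the truncation step needs a word on why one can pass back to unbounded $f_i$, since $\psi$ is not assumed monotone; for the uses in this paper ($f_i$ indicators or bounded, $\psi$ a product or an exponential of a sum) these issues are harmless.
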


\begin{remark}
    In the original statement in \cite{draghici2005rearrangement}, $K_{i,j}(\inner{\unif_i}{\unif_j})$ is replaced by $K_{ij}(d(\unif_i,\unif_j))$ where $K_{ij}$ are decreasing functions and $d(\cdot,\cdot)$ is the geodesic distance on $\sphere$. They are actually equivalent since $\inner{\unif_i}{\unif_j}=\cos(d(\unif_i,\unif_j))$ is a decreasing function in $d(\unif_i,\unif_j)$.
\end{remark}

Using this theorem, we establish two results for spherical RGGs. Recall that for two edge-weighted graphs $H_1$ and $H_2$ on the same node set, we write $H_1 \leq H_2$ if and only if $w^{H_1}_{i \nospace j} \leq w^{H_2}_{i \nospace j}$ for all $i\neq j$. The following proposition states that for any fixed edge-weighted graph $H_0$ and an edge-weighted random geometric graph $H$, the probability of the event $\{H\ge H_0\}$ under constraints $\unif_i\in A_i$ is optimized when $A_i$'s are \emph{spherical caps} centered at the same point.

\begin{proposition}\label{P.sym-dom.proba}
    Let $A_i \subset \sphere$, $i \in [n]$ be measurable, and let $\symm{A_i}$ be their \emph{symmetric rearrangements}. Let $H \sim \rswgg{n}$. Then for any edge-weighted graph $H_0$ on the same node set $[n]$,
    \begin{equation}\label{E.sym-dom.proba}
        \bbP\left(\{ H \geq H_0 \} \cap\bigcap_{i=1}^n\{X_i\in A_i\}\right)\le\bbP\left(\{ H \geq H_0 \} \cap\bigcap_{i=1}^n\{X_i\in \symm{A_i}\}\right) . 
    \end{equation}
\end{proposition}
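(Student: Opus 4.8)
The plan is to deduce this directly from Theorem~\ref{T.Rearrangement} by choosing the functions $f_i$, the function $\psi$, and the kernels $K_{i,j}$ appropriately so that the integral $I[f_1,\dots,f_n]$ equals the left-hand side of~\eqref{E.sym-dom.proba} and $I[\symmfn{f_1},\dots,\symmfn{f_n}]$ equals the right-hand side. The natural choice is $f_i = \indicatorfn{A_i}$, so that $\symmfn{f_i} = \indicatorfn{\symm{A_i}}$ by the last remark of the previous subsection. For the constraint $\{H \ge H_0\}$, recall that $H \sim \rswgg{n}$ means the edge weight between $i$ and $j$ is $\inner{\unif_i}{\unif_j}$; thus $\{H \ge H_0\}$ is the event $\bigcap_{i<j}\{\inner{\unif_i}{\unif_j} \ge (w_{ij}^{H_0})\}$. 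I would encode this via the kernels by setting $K_{i,j}(s) = \indicatorevent{s \ge w_{ij}^{H_0}}$, and take $\psi(u_1,\dots,u_n) = \prod_{i=1}^n u_i$. Then
\[
I[f_1,\dots,f_n] = \int_{(\sphere)^n}\prod_{i=1}^n \indicatorfn{A_i}(\unif_i)\prod_{i<j}\indicatorevent{\inner{\unif_i}{\unif_j}\ge w_{ij}^{H_0}}\,d\mu(\unif_1)\cdots d\mu(\unif_n) = \bbP\left(\{H\ge H_0\}\cap\bigcap_i\{\unif_i\in A_i\}\right),
\]
and similarly $I[\symmfn{f_1},\dots,\symmfn{f_n}]$ gives the right-hand side, so the proposition follows.

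The one genuine gap to address is that Theorem~\ref{T.Rearrangement} as stated requires the $K_{i,j}$ to be \emph{increasing} and $\psi$ to be \emph{continuous} in $AL_2(\bbR_+^n)$, whereas my indicator kernels are only nondecreasing (with a jump) and $\psi = \prod u_i$ is continuous and clearly in $AL_2$ (its mixed partials $\partial_i\partial_j\psi = \prod_{k\ne i,j}u_k \ge 0$), but the kernels are not strictly increasing and not continuous. I would handle this by a standard approximation argument: replace $\indicatorevent{s \ge w_{ij}^{H_0}}$ by a sequence of continuous strictly increasing functions $K_{i,j}^{(\varepsilon)}$ that increase to the indicator pointwise from below (for instance piecewise-linear ramps of width $\varepsilon$ shifted so they lie below the indicator, plus a tiny strictly increasing perturbation $\varepsilon s$ to enforce strict monotonicity if the cited theorem needs it — though nondecreasing should suffice by a further limit). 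Apply Theorem~\ref{T.Rearrangement} to each $K_{i,j}^{(\varepsilon)}$, then let $\varepsilon \to 0$ and use monotone (or dominated) convergence on both sides; the integrands are bounded by $1$ and the domain has finite measure, so the limits pass through. Alternatively, if one prefers to avoid the kernel approximation, one can absorb the constraint $\{H \ge H_0\}$ into the functions $f_i$ is not possible since it couples pairs, so the kernel route is the right one; another clean option is to note that the set of valid $(K_{i,j})$ in Theorem~\ref{T.Rearrangement} is closed under monotone pointwise limits, which some references state directly.

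The main obstacle is thus purely the regularity mismatch between the hypotheses of the cited rearrangement inequality (continuous, increasing kernels) and the indicator-type kernels that naturally encode the event $\{H \ge H_0\}$; everything else is bookkeeping. I expect the approximation step to be short but it should be written carefully, since the whole point of the proposition is to serve as the engine for the upper bounds, and one wants the reduction to spherical caps to be airtight. A secondary minor point to check is the measurability of $\symm{A_i}$ and the identity $\indicatorfn{\symm{A_i}} = (\indicatorfn{A_i})^\bullet$, but this was already recorded in the previous subsection, so I would simply cite it.
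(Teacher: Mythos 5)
Your proposal is correct and takes essentially the same route as the paper, which makes the identical choices $f_i=\indicatorfn{A_i}$, $\psi(y_1,\ldots,y_n)=y_1\cdots y_n$, and $K_{i,j}(x)=\indicatorevent{x\ge w^{H_0}_{ij}}$ and invokes Theorem~\ref{T.Rearrangement} directly. The approximation step you add for the indicator kernels is a harmless extra precaution that the paper omits: it reads ``increasing'' as nondecreasing and applies the cited rearrangement inequality to indicator kernels without further comment, continuity being required only of $\psi$.
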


\begin{proof}
    Take $f_i(X)=\indicatorfn{A_i}(X), \ i\in[n]$, and set $\psi(y_1,\ldots,y_n)=y_1y_2\cdots y_n$. Further, set $K_{i,j}(x)=\indicatorevent{x\ge w^{H_0}_{ij}}$. Then the left hand side of \eqref{E.sym-dom.proba} is equal to $I[f_1,\ldots,f_n]$ and right hand side is equal to $I[\symmfn{f_1},\ldots,\symmfn{f_n}]$, and the conclusion immediately follows from Theorem~\ref{T.Rearrangement}.
\end{proof}

\begin{proposition} \label{P.sym-dom.MGF}
    Let $f$ be a non-negative function on $\sphere$, and let $\unif_1,\ldots,\unif_n$ be \iid \ uniformly distributed on $\sphere$. Then for any $0\le p\le 1$ and $\theta\ge 0$, we have 
    $$\bbE\exp\left(\theta\left(\sum_{i=1}^{n}f(\unif_i)+\sum_{i<j}\indicatorevent{\inner{\unif_i}{\unif_j}\ge\uthreshold{p}}\right)\right)\le\bbE\exp\left(\theta\left(\sum_{i=1}^{n}\symmfn{f}(\unif_i)+\sum_{i<j}\indicatorevent{\inner{\unif_i}{\unif_j}\ge\uthreshold{p}}\right)\right).$$
\end{proposition}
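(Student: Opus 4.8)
The plan is to recast the left-hand side as an instance of the functional $I[\,\cdot\,]$ appearing in Theorem~\ref{T.Rearrangement} and then apply that theorem verbatim, exactly as in the proof of Proposition~\ref{P.sym-dom.proba}. The key observation is that the exponential of a sum factors, so that both the vertex term $\sum_i f(\unif_i)$ and the pair term $\sum_{i<j}\indicatorevent{\inner{\unif_i}{\unif_j}\ge\uthreshold{p}}$ turn into products, the former of one-variable weight functions and the latter of pairwise kernels.

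First I would set $g:=e^{\theta f}$, a non-negative (in fact $\ge 1$) measurable function on $\sphere$, and take $f_1=\dots=f_n=g$. Because the indicator $\indicatorevent{\inner{\unif_i}{\unif_j}\ge\uthreshold{p}}$ is $\{0,1\}$-valued,
\[
\exp\!\Big(\theta\sum_{i<j}\indicatorevent{\inner{\unif_i}{\unif_j}\ge\uthreshold{p}}\Big)=\prod_{i<j}\Big(1+(e^\theta-1)\,\indicatorevent{\inner{\unif_i}{\unif_j}\ge\uthreshold{p}}\Big),
\]
so I would take $K_{i,j}(x):=1+(e^\theta-1)\indicatorevent{x\ge\uthreshold{p}}$ for every $i<j$; since $\theta\ge 0$ this is a non-negative step function, non-decreasing in the inner product (the same shape of kernel used in Proposition~\ref{P.sym-dom.proba}). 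Finally take $\psi(y_1,\dots,y_n)=y_1y_2\cdots y_n$, which is continuous and, being $C^2$ on $\bbR_+^n$ with $\partial_i\partial_j\psi\ge 0$ for $i\neq j$, lies in $AL_2(\bbR_+^n)$. With these choices a direct computation gives
\[
I[g,\dots,g]=\int_{(\sphere)^n}\prod_{i=1}^n e^{\theta f(\unif_i)}\prod_{i<j}K_{i,j}(\inner{\unif_i}{\unif_j})\,d\mu(\unif_1)\cdots d\mu(\unif_n)=\bbE\exp\!\Big(\theta\Big(\sum_{i=1}^n f(\unif_i)+\sum_{i<j}\indicatorevent{\inner{\unif_i}{\unif_j}\ge\uthreshold{p}}\Big)\Big),
\]
which is exactly the left-hand side of the proposition, and Theorem~\ref{T.Rearrangement} yields $I[g,\dots,g]\le I[\symmfn{g},\dots,\symmfn{g}]$.

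The remaining step is to identify $\symmfn{g}$, and here I would use that symmetric rearrangement commutes with post-composition by a non-decreasing function: since $x\mapsto e^{\theta x}$ is continuous and non-decreasing, the function $e^{\theta \symmfn{f}}$ is constant on each slice $\{\inner{\bx}{\eb}=h\}$, non-decreasing in $h$, and equimeasurable with $e^{\theta f}$ (because $\symmfn{f}$ is equimeasurable with $f$ and post-composition by a fixed monotone map preserves the distribution function), hence by uniqueness of the symmetric rearrangement $\symmfn{(e^{\theta f})}=e^{\theta \symmfn{f}}$; the case $\theta=0$ is trivial, both sides of the proposition being $1$. Substituting back, $I[\symmfn{g},\dots,\symmfn{g}]$ equals the right-hand side of the proposition, completing the argument. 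I do not anticipate a genuine obstacle; the only point deserving an explicit (short) justification is this commutation identity $\symmfn{(e^{\theta f})}=e^{\theta \symmfn{f}}$, everything else being bookkeeping to match the hypotheses of Theorem~\ref{T.Rearrangement} (non-negativity of the $f_i$, monotonicity of $K_{i,j}$ in the inner product, and $\psi=\prod_i y_i\in AL_2(\bbR_+^n)$).
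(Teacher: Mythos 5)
Your proposal is correct and follows essentially the same route as the paper: both arguments are direct applications of Theorem~\ref{T.Rearrangement} with the kernels $K_{i,j}(x)=\exp\left(\theta\indicatorevent{x\ge\uthreshold{p}}\right)$ (your $1+(e^\theta-1)\indicatorevent{x\ge\uthreshold{p}}$ is the same function). The only difference is that the paper takes $f_i=f$ and $\psi(y_1,\ldots,y_n)=\exp\left(\theta\sum_i y_i\right)$, which makes the rearranged integrand involve $\symmfn{f}$ directly and so avoids the (easy, and correctly justified) commutation identity $\symmfn{(e^{\theta f})}=e^{\theta\symmfn{f}}$ that your choice $f_i=e^{\theta f}$, $\psi=\prod_i y_i$ requires.
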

\begin{proof}
    Take $f_i=f$ for all $i \in [n]$, and $\psi(y_1,\ldots,y_n)=\exp(\theta\sum_{i}y_i)$. Also set $K_{i,j}(x)=\exp(\theta \indicatorevent{x\ge\uthreshold{p}})$. It is a simple check to verify that the conditions of Theorem \ref{T.Rearrangement} are satisfied, and the result follows immediately.
\end{proof}

\subsection{Proof outline}\label{S.Proof outline}

The estimation of the probability of the RGGs being a complete graph and of the edge deviation events follows a similar approach. Instead of individually proving both Theorems~\ref{T.SClique} and~\ref{T.GClique} (resp., Theorems~\ref{T.SDeviation} and~\ref{T.GDeviation}), we use the coupling lemmas established in Section~\ref{S.Coupling} to reduce the analysis to only one of the two models. For some bounds, the spherical model turns out to be simpler to handle, meanwhile for others, we work with the Gaussian model.

The lower bounds for the probability of observing a complete graph are proved in Section~\ref{lower bound}. We first obtain an Erd\H{o}s-R\'enyi type lower bound, $p^{\binom{n}{2}}$, for the spherical model using a purely geometric argument. Note that this lower bound does not depend on the dimension $d$. The remaining bounds in Equations~\eqref{E.UCliqueLB} and~$\eqref{E.GCliqueLB}$ are proved by considering the Gaussian model, which is easier to handle with our method. To obtain these bounds, we consider the event that the first coordinate of all the vectors $\gauss_1, \ldots,\gauss_n$ have a small \emph{bias} of order $\sqrt[4]{d\log n}$. Conditioned on this event,  we show that the GRGG is a clique with a positive probability. We apply a similar idea to establish a lower bound on edge deviation probability, see Section~\ref{S.edge lower bound}.

The upper bounds are proved in Section~\ref{S.clique upper bound} and are more involved. We observe that conditioned on the GRGG being a complete graph, the statistic $S=\sum_{i\neq j}\inner{\gauss_i}{\gauss_j}$ is atypically large. This statistic is easier to handle since it is the sum of products of independent standard Gaussian random variables, and we use a Bayesian argument to get one upper bound. We obtain another upper bound by considering the SRGG on $n$ vertices to be obtained by a graph process, where the vertices are added one at a time. This gives us a corresponding subset valued random process $(A_k)_{k \ge 0}$ starting from $A_0 = \sphere$, and conditioned on $\unif_{1},\ldots,\unif_{k}$, $A_k$ is the subset of $\sphere$ where $\unif_{k+1}$ can be in so that the SRGG on the first $k+1$ vertices is still a complete graph. By construction, $A_{k+1} = A_k \cap \scap^p_{\unif_{k+1}}$. We use symmetric rearrangements of subsets to transform the process $(A_k)_{k \ge 0}$ to a spherical cap valued random process. This reduced process is more tractable, and we prove one of the bounds in Theorems~\ref{T.SClique} and~\ref{T.GClique} by analyzing how the caps shrink with the arrival of each new vertex.

For the edge deviation event $\uedgedeviation{\deviationrate}$, we first obtain a bound on the moment generating function (MGF) of the number of edges in the RGG. Defining $$f_k(\cdot) = \sum_{i=1}^{k} \indicatorfn{\scap^p_{\unif_i}}(\cdot),$$ the number of edges in the SRGG can be written as $|E(G)| = \sum_{i=1}^n f_{i-1}(\unif_i)$. To get an upper bound on the MGF of $|E(G)|$, we use symmetric rearrangement of functions to transform $f_k$ into a more tractable process, and then bound the MGF of $\setsize{E(G)}$ by the MGF of this reduced process. Using exponential Markov's inequality then completes the proofs.

\section{Estimating Clique Probability}\label{S.Estimating Clique Probability}

This section contains the proofs of Theorems~\ref{T.SClique} and~\ref{T.GClique}, along with the statements and proofs of various lemmas and propositions that we use. 
\subsection{Lower bounds}\label{lower bound}

In this section, we obtain the lower bounds in Theorems~\ref{T.SClique} and ~\ref{T.GClique}. The three different bounds in the exponent are derived via different techniques. First, we prove a lower bound that does not depend on the dimension $d$. As a consequence, we see that the clique probability for the spherical RGG is at least as large as that for the Erd\H{o}s-R\'enyi graph, which matches the intuition that the RGGs are more clustered than the Erd\H{o}s-R\'enyi graph.

\begin{proposition}
    \label{P.ERlowerbound}
    For all $n,d \ge 2$ and $0\le p\le\frac{1}{2}$,$$\bbP(\uclique)\ge p^{\binom{n}{2}}\ge\exp(-cn^2).$$
\end{proposition}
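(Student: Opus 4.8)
Heuristically, the geometric dependence can only help for this event: the edges of the RGG are positively correlated, so one expects at least the Erd\H{o}s--R\'enyi value $p^{\binom{n}{2}}$. The right-hand inequality is trivial --- since $\binom{n}{2}\le n^{2}/2$ we have $p^{\binom{n}{2}}=\exp\!\big(-\binom{n}{2}\log(1/p)\big)\ge\exp\!\big(-\tfrac12 n^{2}\log(1/p)\big)$, so it holds with $c=\tfrac12\log(1/p)$ --- so the real task is $\bbP(\uclique)\ge p^{\binom{n}{2}}$. The plan is to build the SRGG one vertex at a time and track the shrinking admissible region: set $A_{0}:=\sphere$ and, for $k\ge 1$, $A_{k}:=A_{k-1}\cap\scap^{p}_{\unif_{k}}=\bigcap_{i\le k}\scap^{p}_{\unif_{i}}$, so that $\uclique=\bigcap_{k=2}^{n}\{\unif_{k}\in A_{k-1}\}$. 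Conditioning on $\unif_{1},\dots,\unif_{n-1}$ and using that $\unif_{n}$ is independent and uniform on $\sphere$,
\[
\bbP(\uclique)=\bbP\big(\ucliquepar{n}{d}{p}\big)=\bbE\Big[\indicatorfn{\ucliquepar{n-1}{d}{p}}\,\smeasure(A_{n-1})\Big].
\]

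The single geometric fact I would extract is the following monotonicity lemma: \emph{if $C\subseteq\sphere$ is geodesically convex and $\bv\in C$, then $\smeasure\big(\scap^{p}_{\bv}\cap C\big)\ge p\,\smeasure(C)$ for every $p\in(0,1/2]$}. Granting it, observe that on the event $\ucliquepar{n-1}{d}{p}$ each $A_{k}$ with $1\le k\le n-1$ is a finite intersection of caps of measure $p\le 1/2$; since $\uthreshold{p}\ge 0$ these caps are geodesic balls of radius at most $\pi/2$, so $A_{k}$ is geodesically convex and contained in a hemisphere, and moreover $\unif_{k+1}\in A_{k}$ because $\inner{\unif_{k+1}}{\unif_{i}}\ge\uthreshold{p}$ for all $i\le k$. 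Applying the lemma with $C=A_{k-1}$ and $\bv=\unif_{k}$ for $k=n-1,\dots,2$, together with $\smeasure(A_{1})=\smeasure(\scap^{p}_{\unif_{1}})=p$, gives $\smeasure(A_{n-1})\ge p\,\smeasure(A_{n-2})\ge\cdots\ge p^{\,n-2}\,\smeasure(A_{1})=p^{\,n-1}$ on that event. Substituting into the display yields $\bbP(\ucliquepar{n}{d}{p})\ge p^{\,n-1}\,\bbP(\ucliquepar{n-1}{d}{p})$, and since the same argument applies with $n$ replaced by any $m\ge 2$, iterating down to $\bbP(\ucliquepar{1}{d}{p})=1$ gives $\bbP(\uclique)\ge p^{\sum_{k=2}^{n}(k-1)}=p^{\binom{n}{2}}$.

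The heart of the matter is the lemma, and here is how I would prove it. Take $\bv=\eb$, so that $\scap^{p}_{\bv}=\{x\in\sphere:x_{1}\ge\uthreshold{p}\}$, and disintegrate the uniform measure along the first coordinate: writing $f_{d}$ for the one-dimensional marginal density of $x_{1}$ and $g(s)$ for the normalized uniform measure of the slice $C\cap\{x_{1}=s\}$ within the corresponding latitude sphere, one has $\smeasure(C)=\int_{-1}^{1}g(s)f_{d}(s)\,\mathd s$ and $\smeasure\big(\scap^{p}_{\bv}\cap C\big)=\int_{\uthreshold{p}}^{1}g(s)f_{d}(s)\,\mathd s$. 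Since $\int_{\uthreshold{p}}^{1}f_{d}(s)\,\mathd s=p$ by the definition of $\uthreshold{p}$, the claimed inequality is exactly Chebyshev's correlation inequality for the two nondecreasing functions $s\mapsto\indicatorevent{s\ge\uthreshold{p}}$ and $s\mapsto g(s)$ under the probability density $f_{d}$ --- \emph{provided $g$ is nondecreasing}. That monotonicity is where geodesic convexity enters: for $s_{1}<s_{2}$, pushing each point of $C\cap\{x_{1}=s_{1}\}$ along its meridian toward $\bv$ until it reaches latitude $s_{2}$ is a similarity of the latitude-$s_{1}$ sphere onto the latitude-$s_{2}$ sphere, hence normalized-measure preserving, and its image lies in $C$ because the minor geodesic from any point of $C$ to $\bv\in C$ stays inside the convex set $C$; therefore $g(s_{1})\le g(s_{2})$. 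I expect this lemma --- and in particular the small but necessary verification that finite intersections of caps of measure at most $1/2$ really are geodesically convex subsets of a hemisphere, so that the geodesic to $\bv$ is well defined --- to be the only genuine obstacle; everything else is the bookkeeping sketched above.
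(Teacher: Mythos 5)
Your proposal is correct and follows the same route as the paper: the same vertex-by-vertex decomposition $\bbP(\cE_{k+1}\mid\cE_k)=\bbE[\mu(A_k)\mid\cE_k]$ with $A_k=\bigcap_{i\le k}\scap^p_{\unif_i}$, driven by the same key lemma that $\mu(\scap^p_{\bv}\cap C)\ge p\,\mu(C)$ for convex $C\ni\bv$ (the paper's Lemma~\ref{L.+correlation}), and the same use of $p\le 1/2$ to guarantee convexity of intersections of caps. The only divergence is in how the lemma is proved: the paper constructs an explicit contraction $\varphi$ along geodesics through $\bv$ satisfying $\sumeasure(\varphi(B))=p\,\sumeasure(B)$ and $\varphi(C)\subset C$, whereas you disintegrate $\mu$ along latitudes and combine a Chebyshev/Harris correlation inequality with monotonicity of the slice measures, proved by the meridian similarity map. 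Both arguments are valid and of comparable length; the paper's map gives the inequality in one stroke, while your version makes the measure-theoretic bookkeeping (and the role of convexity, via monotonicity of $g$) more explicit.
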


We prove the above proposition by using the following result about the spherical model. We drop the subscript and superscript from $\umeasure{d-1}$ for simplicity.
\begin{lemma} \label{L.+correlation}
Let $\sumeasure$ denote the uniform probability measure on $\sphere$. Suppose $A\subset \sphere$ is convex, and $\bx \in A$. Then, 
\begin{align}
\mu(A\cap\scap_{\bx}^p) \ge p\cdot\sumeasure(A). 
\end{align}
\end{lemma}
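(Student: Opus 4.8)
The plan is to pass to geodesic polar coordinates centred at $\bx$ and reduce the claimed inequality to a one-line pointwise estimate. First I would parametrise $\sphere$ (minus the null point $-\bx$) by $(\phi,\omega)$, where $\phi\in[0,\pi]$ is the angle from $\bx$ and $\omega\in\mathbb{S}^{d-2}\subseteq\bx^{\perp}$ records the direction of departure, so that the point with coordinates $(\phi,\omega)$ is $(\cos\phi)\,\bx+(\sin\phi)\,\omega$. In these coordinates the uniform measure disintegrates as $d\mu=c\,(\sin\phi)^{d-2}\,d\phi\,d\sigma(\omega)$, with $\sigma$ the uniform probability measure on $\mathbb{S}^{d-2}$ and $c$ the constant making $\int_0^\pi c\,(\sin\phi)^{d-2}\,d\phi=1$. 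Writing $F(t):=\mu\big(\{w\in\sphere:\inner{w}{\bx}\ge\cos t\}\big)=\int_0^t c\,(\sin\phi)^{d-2}\,d\phi$ for the measure of the cap of angular radius $t$ about $\bx$ (equivalently $F(t)=\utaild{d}{\cos t}$), the set $\scap^p_{\bx}$ is exactly $\{(\phi,\omega):\phi\le\rho\}$ for the $\rho$ with $F(\rho)=p$, and $F$ is continuous and strictly increasing on $[0,\pi]$ with $F(\pi)=1$.

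Next I would use the convexity of $A$. Fix $\omega$ and put $A_\omega:=\{\phi\in[0,\pi]:(\phi,\omega)\in A\}$. Since $\bx\in A$ we have $0\in A_\omega$, and if $\phi_1<\phi_2$ both lie in $A_\omega$ then the minimizing geodesic joining $(\phi_1,\omega)$ and $(\phi_2,\omega)$ — which is precisely the sub-arc $\{(\phi,\omega):\phi_1\le\phi\le\phi_2\}$, since $\phi_2-\phi_1<\pi$ — lies in $A$ by (geodesic) convexity. Together these force $A_\omega$ to be an interval with left endpoint $0$, hence $A_\omega=[0,g(\omega)]$ up to a $\phi$-null set, where $g(\omega):=\sup A_\omega\in[0,\pi]$ (measurability of $\omega\mapsto g(\omega)$ is routine). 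Integrating the disintegration over $\phi$ first then gives
\[
\mu(A)=\int_{\mathbb{S}^{d-2}}F\big(g(\omega)\big)\,d\sigma(\omega),\qquad
\mu\big(A\cap\scap^p_{\bx}\big)=\int_{\mathbb{S}^{d-2}}F\big(\min(g(\omega),\rho)\big)\,d\sigma(\omega),
\]
and since $F$ is increasing with $F(\rho)=p$, the integrand on the right equals $\min\big(F(g(\omega)),\,p\big)$.

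The proof then closes with the elementary observation that $\min(z,p)\ge pz$ for every $z\in[0,1]$ and $p\in[0,1]$ (check $z\le p$ and $z>p$ separately). Applying this with $z=F(g(\omega))\in[0,1]$ under the integral sign yields $\mu\big(A\cap\scap^p_{\bx}\big)\ge p\,\mu(A)$; note the bound in fact holds for all $p\in[0,1]$, the restriction $p\le 1/2$ only being needed to ensure that the sets $\bigcap_i\scap^p_{\unif_i}$ occurring later are geodesically convex.

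I expect the one point needing genuine care to be the interval structure of $A_\omega$ in the second step: one must pin down the meaning of ``$A$ convex'' — the natural choice here is geodesic convexity, which is exactly the property enjoyed by intersections of sub-hemispherical caps — and dispose of the measure-zero set of directions/angles on which minimizing geodesics fail to be unique (the antipode $-\bx$). The polar disintegration and the trivial inequality $\min(z,p)\ge pz$ are otherwise routine bookkeeping.
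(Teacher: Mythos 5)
Your proof is correct, and it reaches the conclusion by a somewhat different mechanism than the paper. The paper's argument is a one-liner built on a contraction map: for $y\neq-\bx$ it defines $\varphi(y)$ to be the point on the geodesic from $\bx$ to $y$ with $\inner{\bx}{\varphi(y)}=\utailinverse{p\cdot\utail{\inner{\bx}{y}}}$, observes that $\varphi$ maps into $\scap_{\bx}^p$ and scales the measure by exactly $p$, and uses convexity only to get $\varphi(A)\subseteq A$, whence $\sumeasure(A\cap\scap_{\bx}^p)\ge\sumeasure(\varphi(A))=p\cdot\sumeasure(A)$. Your polar disintegration is the same geometric idea executed fibrewise: in the coordinate $u=F(\phi)$ along each direction $\omega$ the measure is uniform on $[0,1]$, the paper's map is $u\mapsto pu$ on each ray, and your scalar inequality $\min(z,p)\ge pz$ is exactly the fibrewise comparison of $\sumeasure(\varphi(A))$ with $\sumeasure(A\cap\scap_{\bx}^p)$. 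What your version buys is that it makes explicit the measure-scaling fact ($\sumeasure\circ\varphi=p\cdot\sumeasure$) that the paper asserts without computation, it isolates that only geodesic star-shapedness of $A$ about $\bx$ is used (convexity is stronger than needed), and it confirms the bound for all $p\in[0,1]$, with $p\le 1/2$ relevant only so that the sets $A_k$ in the later application are convex; what the paper's version buys is brevity. The only point to state carefully in your write-up is the one you already flag: the slices $A_\omega$ are intervals only up to endpoints and the antipode $-\bx$, so the identities $\mu(A)=\int F(g(\omega))\,d\sigma(\omega)$ and $\mu(A\cap\scap_{\bx}^p)=\int\min(F(g(\omega)),p)\,d\sigma(\omega)$ hold modulo null sets, which is harmless.
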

\begin{proof}
    Recall that $\utail{x}=\bbP(\unif_1^{(1)}>x)$ is the tail distribution function of 1 dimensional marginal of $\sumeasure$.  We construct a contraction mapping $\varphi=\varphi_{p,x} :\sphere \setminus \{-x\}\to\scap_{\bx}^p$ as follows. For every $y\in\sphere \setminus \{-x\}$, set $\varphi_{p,x}(y)$ to be the unique point $z$ on the geodesic segment connecting $x$ and $y$ s.t. $\inner{x}{z}=\utailinverse{p\cdot \utail{\inner{x}{y}}}$. Then $\sumeasure \circ \varphi=p\cdot\sumeasure$, and since $A$ is convex with $x \in A$, $\varphi(A)\subset A$. So we immediately conclude that $$\sumeasure(A\cap\scap_{\bx}^p)\ge\sumeasure(\varphi(A))=p\cdot\sumeasure(A).$$
\end{proof}
\begin{proof}[Proof of Proposition~\ref{P.ERlowerbound}]\label{Pr.proofERlowerbound}

Define the sets $A_k = \bigcap_{i=1}^k \scap_{X_i}^p$, and consider the events 
\[
\cE_k = \{\inner{X_i}{X_j}\ge\uthreshold{p} \  \forall \ 1 \leq i \neq j \leq k\}, \ 1 \leq k \leq n.
\]
Note that $\cE_n = \uclique$, and 
\[
\bbP(\uclique) = \prod_{i=1}^{n-1} \bbP(\cE_{i+1}|\cE_{i}).
\]
Further, observe that $\mathbb P(\mathcal E_{k+1}|\mathcal E_k)=\mathbb E[\unif_{k+1} \in A_k |\mathcal E_k] =\mathbb E[\mu(A_{k})|\mathcal E_k]$. Using Lemma~\ref{L.+correlation} recursively, we obtain that $\mathbb E[\mu(A_{k})|\mathcal E_k] \ge p^k$, which we plug into the previous equation to conclude that
\[
\bbP(\uclique) \ge p^{\binom{n}{2}} \ge p^{n^2}.
\]
\end{proof}

Note that the result above does not depend on $d$. But for $k$ large enough, a typical $A_k$ is too ``small'', in the sense that $A_{k+1}$ has nearly the same measure as $A_k$ instead of $p \sumeasure(A_k)$. We obtain a dimension dependent bound in this regime by introducing a biasing argument, where we condition on the event that all the points have a slightly large first coordinate. The technique can be used in both spherical case and Gaussian case. We choose the Gaussian case for simplicity. 

\begin{proposition}\label{P.biaslowerbound}
    There exists constants $\varepsilon,C>0$ such that for all $n,d\ge 2$ and $\log n<\varepsilon d$,
    $$\bbP(\gclique)\ge\exp(-C n \sqrt{d\log n}).$$
\end{proposition}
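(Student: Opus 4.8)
The plan is to condition on the event $\cB$ that every Gaussian vector $\gauss_i$ has a sizeable first coordinate, say $\gauss_i^{(1)} \ge b$ for a bias level $b = b(n,d)$ to be chosen of order $\sqrt[4]{d\log n}$ up to constants, and then argue that, once this holding field is present in a single common direction $\eb$, the remaining $d-1$ coordinates only need to avoid pulling the inner product below the threshold, which now happens with constant probability per vertex. Writing $\gauss_i = (\gauss_i^{(1)}, \bar\gauss_i)$ with $\bar\gauss_i \in \bbR^{d-1}$, we have $\inner{\gauss_i}{\gauss_j} = \gauss_i^{(1)}\gauss_j^{(1)} + \inner{\bar\gauss_i}{\bar\gauss_j}$. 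On the event $\{\gauss_i^{(1)} \ge b \ \forall i\}$, the first term is at least $b^2$, so it suffices to have $\inner{\bar\gauss_i}{\bar\gauss_j} \ge \gthreshold{p} - b^2$ for all $i \neq j$. By Lemma \ref{L.estimate threshold}, $\gthreshold{p} = (\gtailinverse{p} + o(1))\sqrt{d}$, so choosing $b^2 \asymp \sqrt{d\log n}$ makes $\gthreshold{p} - b^2$ still of order $\sqrt{d}$ but we will want it to be, say, at most $-C'\sqrt{(d-1)\log n}$ in the regime $p \le 1/2$ — wait, more carefully: $\gthreshold p$ is of order $+\sqrt d$ when $p<1/2$, so we instead need the bias contribution $b^2$ to exceed $\gthreshold p$ by a margin of order $\sqrt{d}$, forcing $b^2 \asymp \sqrt d$, i.e.\ $b \asymp d^{1/4}$; the extra $\sqrt{\log n}$ comes from needing the probability of $\{\bar\gauss_i^{(1)} \text{-free event}\}$ to beat a union-type loss over the $n$ vertices. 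So one should take $b = \kappa\, (d\log n)^{1/4}$ for a suitable constant $\kappa$, so that $b^2 - \gthreshold p \ge c_1\sqrt{(d-1)\log n}$ for large $d$.

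The key steps, in order: (1) Lower-bound $\bbP(\gclique) \ge \bbP(\gclique \mid \cB)\,\bbP(\cB)$ where $\cB = \bigcap_i \{\gauss_i^{(1)} \ge b\}$. Since the coordinates are independent $\cN(0,1)$, $\bbP(\cB) = \gtail{b}^n \ge \exp(-n\cdot C_2 b^2) = \exp(-C_2\kappa^2 n\sqrt{d\log n})$ using the Gaussian tail bound $\gtail{b} \ge \exp(-b^2)$ valid for $b$ bounded away from... — here one uses $\log(1/\gtail b) \le b^2/2 + O(\log b)$, which for $b = \Theta((d\log n)^{1/4})$ and $\log n < \varepsilon d$ gives $\log(1/\gtail b) \le C_2 \sqrt{d\log n}$. (2) Bound $\bbP(\gclique \mid \cB)$ from below by a constant. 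On $\cB$ it suffices that $\inner{\bar\gauss_i}{\bar\gauss_j} \ge \gthreshold p - b^2 =: -s$ with $s \ge c_1\sqrt{(d-1)\log n}$ for all $i\neq j$; the $\bar\gauss_i$ are i.i.d.\ $\cN(0,I_{d-1})$ and independent of the first coordinates, so this probability equals $\bbP\big(\inner{\bar\gauss_i}{\bar\gauss_j} \ge -s \ \forall i\neq j\big)$. (3) Show this last probability is $\ge 1/2$ (or any constant) by a union bound over the $\binom n2$ pairs: $\bbP(\inner{\bar\gauss_i}{\bar\gauss_j} < -s) = \bbP(\inner{\bar\gauss_1}{\bar\gauss_2} > s)$; by the CLT/concentration for the sum of $d-1$ independent mean-zero unit-variance products (the same computation as in Lemma \ref{L.estimate threshold}), this is at most $\exp(-c_3 s^2/(d-1)) \le \exp(-c_3 c_1^2 \log n) = n^{-c_3 c_1^2}$, and choosing $\kappa$ (hence $c_1$) large enough makes $c_3 c_1^2 > 2$, so $\binom n2 \cdot n^{-c_3c_1^2} \to 0$. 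One needs a genuine sub-exponential tail bound for $\inner{\bar\gauss_1}{\bar\gauss_2} = \sum_{k=2}^d \bar\gauss_1^{(k)}\bar\gauss_2^{(k)}$ at deviation $s = \Theta(\sqrt{d\log n})$, which is well within the sub-exponential (Bernstein) regime since $s = o(d)$; Bernstein's inequality for products of sub-exponentials (e.g.\ \cite{vershynin2018high}) gives exactly $\exp(-c s^2/d)$ there. Combining (1)–(3): $\bbP(\gclique) \ge \tfrac12 \exp(-C_2\kappa^2 n\sqrt{d\log n}) \ge \exp(-Cn\sqrt{d\log n})$.

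The main obstacle is step (3): getting the right two-sided control on the tail of $\inner{\bar\gauss_1}{\bar\gauss_2}$ at the scale $s \asymp \sqrt{d\log n}$, and in particular verifying that the Bernstein constant $c_3$ together with the freely chosen bias constant $\kappa$ can be tuned so that the union bound over $\binom n2$ pairs closes while simultaneously keeping $\bbP(\cB)$'s exponent $C_2\kappa^2 n\sqrt{d\log n}$ of the claimed order — i.e.\ the single parameter $\kappa$ must be large enough for (3) but this only worsens the constant $C$ in the conclusion, not its order, so there is no real tension, only bookkeeping. A secondary technical point is that $\gthreshold p$ is only asymptotically $\gtailinverse p\sqrt d$; for finite $d$ one should either invoke a quantitative version of Lemma \ref{L.estimate threshold} or simply absorb the $o(1)\sqrt d$ error into the margin by enlarging $\kappa$, which is harmless. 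The restriction $\log n < \varepsilon d$ is exactly what guarantees $b = \Theta((d\log n)^{1/4})$ stays in the regime where $\log(1/\gtail b) \asymp b^2$ and where $s = \Theta(\sqrt{d\log n}) = o(d)$ keeps us in the sub-exponential tail regime, so it enters naturally.
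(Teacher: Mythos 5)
Your proposal is correct and follows essentially the same route as the paper: bias every first coordinate to level $b$ with $b^2=\gthreshold{p}+\Theta(\sqrt{d\log n})$ (the paper takes $b=(\gthreshold{p}+C\sqrt{d\log n})^{1/2}$, which is the same scale $(d\log n)^{1/4}$ as your $\kappa(d\log n)^{1/4}$), pay $\exp(-O(n\sqrt{d\log n}))$ for this via the Gaussian tail, and then use Bernstein's inequality for $\inner{\bar\gauss_i}{\bar\gauss_j}$ plus a union bound over $\binom{n}{2}$ pairs — with $\log n<\varepsilon d$ ensuring the sub-Gaussian branch of Bernstein applies — to make the conditional clique probability at least $1/2$. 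No substantive differences; your bookkeeping remarks about absorbing the $o(1)$ in Lemma~\ref{L.estimate threshold} by enlarging $\kappa$ are exactly the right fix.
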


\begin{proof}
    Suppose $\gauss_i=(\gauss_i^{(1)},\ldots,\gauss_i^{(d)})$, where $\gauss_i^{(k)}$'s are \iid \ standard Gaussian random variables. For any constant $C > 0$, define the biased event $\mathcal B_C:=\{\gauss_i^{(1)}\ge (\gthreshold{p}+C\sqrt{d\log n})^{1/2} \  \forall 1 \le i \le n\}$. Conditionally on $\mathcal B_C$ happening, $\mathcal E$ fails only if 
    $$\sum_{k=2}^{d}\tilde X_i^{(k)}\tilde X_j^{(k)}<\gthreshold{p}-\tilde X_i^{(1)}\tilde X_j^{(1)}\le -C\sqrt{d\log n}$$
    for some $i,j\in[n]$.
    Since $(\gauss_i^{(j)}\gauss_k^{(j)})_{2\le j\le d}$ are independent, mean zero, sub-exponential random variables, we can apply Bernstein's inequality (see Theorem 2.8.1 in~\cite{vershynin2018high}) to get:
    $$\mathbb P\left(\left|\sum_{k=2}^{d}\tilde X_i^{(k)}\tilde X_j^{(k)}\right|\ge C\sqrt{d\log n} \right)\le 2\exp\left[-c\min\left(C^2\log n,C\sqrt{d\log n}\right)\right],$$
    which is bounded by $2n^{-3}$ if $\log n<\varepsilon d$ for sufficiently small $\varepsilon$ and appropriately chosen constant $C$. By taking an union bound on all possible pairs $(i,j)$, we have that $$\bbP\left(\mathcal E|\mathcal B_C\right)\ge 1-\binom{n}{2}2n^{-3}\ge \frac{1}{2}.$$
    This gives $$\mathbb P(\mathcal E)\ge \frac{1}{2}\mathbb P(\mathcal B_C)\ge \frac{1}{2}\gtail{(\gthreshold{p}+C\sqrt{d\log n})^{1/2}}^n\ge \exp(-Cn\sqrt{d\log n}).$$ 
\end{proof}

If $n$ is exponentially large in $d$, then that union bound used in the proof above will be redundant, and we can establish a better bound by adding a small bias to all coordinates.

\begin{proposition} \label{P.stronger bias}
    There exists a constant $C>0$ such that for all $n,d\ge 2$,
    $$\bbP(\gclique)\ge\exp(-C nd).$$
\end{proposition}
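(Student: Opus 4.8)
The plan is to bias \emph{every} coordinate of \emph{every} Gaussian vector upward by a fixed positive constant, so that all $\binom n2$ pairwise inner products are forced above the threshold \emph{deterministically}. Unlike in the proof of Proposition~\ref{P.biaslowerbound}, no union bound over pairs is then needed, so the entire cost of the argument is the probability of the biasing event, which will turn out to be $\exp(-\Theta(nd))$.

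First I would record two elementary facts about the Gaussian threshold $\gthreshold{p,d}$. Because $p\le 1/2$ and the law of $\inner{\gauss_1}{\gauss_2}$ is symmetric about $0$, we have $\bbP(\inner{\gauss_1}{\gauss_2}\ge 0)\ge 1/2\ge p$; since $t\mapsto\bbP(\inner{\gauss_1}{\gauss_2}\ge t)$ is non-increasing, this forces $\gthreshold{p,d}\ge 0$ for every $d\ge 2$. On the other hand, Lemma~\ref{L.estimate threshold} gives $\gthreshold{p,d}=(\gtailinverse{p}+o(1))\sqrt d$, hence $\gthreshold{p,d}/d\to 0$ as $d\to\infty$. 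A real sequence all of whose terms are finite and which converges is bounded, so
\[
b:=\sup_{d\ge 2}\sqrt{\gthreshold{p,d}/d}
\]
is a finite nonnegative constant depending only on $p$.

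Next I would introduce the biased event. Writing $\gauss_i^{(k)}$ for the $k$-th coordinate of $\gauss_i$, set
\[
\mathcal B:=\Bigl\{\gauss_i^{(k)}\ge b\ \text{ for all }1\le i\le n,\ 1\le k\le d\Bigr\}.
\]
On $\mathcal B$ every product $\gauss_i^{(k)}\gauss_j^{(k)}$ is a product of two reals that are both $\ge b\ge 0$, hence is $\ge b^2$; therefore $\inner{\gauss_i}{\gauss_j}=\sum_{k=1}^d\gauss_i^{(k)}\gauss_j^{(k)}\ge d b^2\ge\gthreshold{p,d}$ for all $i\ne j$, i.e.\ $\mathcal B\subseteq\gclique$. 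Since the $nd$ random variables $\gauss_i^{(k)}$ are i.i.d.\ standard Gaussians, this yields
\[
\bbP(\gclique)\ge\bbP(\mathcal B)=\gtail{b}^{\,nd}=\exp\!\left(-\Bigl(\log\tfrac1{\gtail{b}}\Bigr)nd\right),
\]
which is exactly the asserted bound with $C:=\log(1/\gtail{b})$; this constant is positive since $\gtail{b}\le\gtail{0}=1/2$, and finite since $\gtail{b}>0$.

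The only place requiring any care is the finiteness of $b$, i.e.\ that $\gthreshold{p,d}/d$ does not blow up over the small values of $d$ --- this is precisely where I use $\gthreshold{p,d}\ge 0$ in combination with $\gthreshold{p,d}/d\to 0$. If one preferred to avoid invoking boundedness over all dimensions at once, one could instead let the bias level depend on $d$ (taking $b_d\asymp d^{-1/4}$ once $d$ is large, so that $\gtail{b_d}\to 1/2$) and dispose of the finitely many small dimensions directly --- e.g.\ for fixed $d$, conditioning on all $\gauss_i$ lying in a unit ball around a sufficiently far point $R\eb$ already gives $\bbP(\gclique)\ge q_d^{\,n}\ge\exp(-C_d nd)$ --- but the uniform constant $b$ above removes the need for a case split. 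Finally, although the statement is for all $n,d\ge 2$, this is the governing term of~\eqref{E.GCliqueLB} only in the range $d\lesssim\log n$.
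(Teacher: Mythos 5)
Your proof is correct and is essentially the paper's own argument: condition every coordinate $\gauss_i^{(k)}$ to exceed (a bound on) $\sqrt{\gthreshold{p}/d}$, so that $\gclique$ holds deterministically and the cost is $\gtail{\cdot}^{nd}=\exp(-\Theta(nd))$. Your only deviation is taking the uniform constant $b=\sup_{d\ge2}\sqrt{\gthreshold{p,d}/d}$ instead of the $d$-dependent level $\sqrt{\gthreshold{p,d}/d}$, which is a harmless refinement that makes the ``for all $n,d\ge2$'' uniformity explicit rather than hidden in the paper's $o(1)$.
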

\begin{proof}
    Consider a different event $\mathcal E_0':=\{\gauss_i^{(j)}\ge (\gthreshold{p}/d)^{1/2} \ \forall 1\le i\le n, 1\le j\le d\}$ which immediately implies $\gclique$. 
    Recall that $\gthreshold{p}/d=O(d^{-1/2})$ so we have $$\bbP(\gclique)\ge\gtail{((\gthreshold{p}/d)^{1/2}}^{nd}\ge 2^{-(1+o(1))nd}\ge \exp(-Cnd).$$
\end{proof}

Combining the results of Propositions~\ref{P.ERlowerbound}, \ref{P.biaslowerbound} and \ref{P.stronger bias} via the coupling result (Lemma~\ref{L.coupling}) gives us the desired lower bounds in Theorems~\ref{T.SClique} and~\ref{T.GClique}.

\subsection{Upper bounds}\label{S.clique upper bound}

We can infer from the proof of Theorem \ref{P.biaslowerbound} that in some regimes (e.g. $d\gtrsim \sqrt n$), the event $\gclique$ happens if there is a small bias for a majority of the Gaussian vectors. To capture a similar phenomenon from a macroscopic perspective, we introduce a statistic $S:=\sum_{1\le i\neq j\le n}\inner{\gauss_i}{\gauss_j}$. The following lemmas cover some properties of this statistic.

\begin{lemma}\label{L.S is biased}
    There exists a constant $c_1=c_p>0$ such that for all $n,d\ge 2$,
    $$\mathbb E[S|\gclique]\ge c_1n^2\sqrt d.$$
\end{lemma}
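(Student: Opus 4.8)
The plan is to combine a \emph{deterministic} lower bound on $S$ that is valid on the event $\gclique$ with a uniform-in-$d$ lower bound on the Gaussian threshold $\gthreshold{p}$ coming from Lemma~\ref{L.estimate threshold}.

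First, note that on the event $\gclique$ we have $\inner{\gauss_i}{\gauss_j}\ge\gthreshold{p}$ for \emph{every} ordered pair $(i,j)$ with $i\neq j$, since $\gclique$ is symmetric in $i$ and $j$. Summing over all $n(n-1)$ such ordered pairs gives the pointwise bound $S\ge n(n-1)\gthreshold{p}$ on $\gclique$, and therefore $\mathbb E[S\mid\gclique]\ge n(n-1)\gthreshold{p}$. Consequently it suffices to show that $\gthreshold{p}\ge c_p'\sqrt d$ for some constant $c_p'>0$ depending only on $p$, valid for all $d\ge 2$; combining this with $n(n-1)\ge n^2/2$ for $n\ge 2$ then yields the claim with $c_1=c_p'/2$.

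For the uniform bound on $\gthreshold{p}$: for each fixed $d$, the random variable $\inner{\gauss_1}{\gauss_2}=\sum_{k=1}^{d}\gauss_1^{(k)}\gauss_2^{(k)}$ has a distribution that is symmetric about $0$ (replacing $\gauss_1$ by $-\gauss_1$ preserves the joint law), so $\bbP(\inner{\gauss_1}{\gauss_2}\ge 0)=1/2$; since $p<1/2$ this forces $\gthreshold{p}>0$, hence $\gthreshold{p}/\sqrt d>0$ for every $d\ge 2$. By Lemma~\ref{L.estimate threshold}, $\gthreshold{p}/\sqrt d\to\gtailinverse{p}$ as $d\to\infty$, and $\gtailinverse{p}>0$ because $p<1/2$. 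A sequence of strictly positive numbers converging to a strictly positive limit has strictly positive infimum, so $c_p':=\inf_{d\ge 2}\gthreshold{p}/\sqrt d>0$, as needed.

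I do not expect a genuine obstacle: the entire content of the lemma is that conditioning on the graph being complete pins all $\binom n2$ pairwise inner products above the threshold $\gthreshold{p}=\Theta(\sqrt d)$, and the only point requiring an explicit word is the uniformity in $d$ of the lower bound on $\gthreshold{p}$, which is handled by the convergence in Lemma~\ref{L.estimate threshold}. (The argument as written uses $p<1/2$, which is the regime of interest throughout this section; the degenerate case $p=1/2$ would require a separate, softer argument showing $\mathbb E[S\mid\gclique]$ stays of order $n^2\sqrt d$ via the central limit theorem for a single pair together with a positive-association estimate, but it is not needed for the main theorems.)
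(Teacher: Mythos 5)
For $p<1/2$ your argument is correct and is essentially the paper's: on $\gclique$ one has the pointwise bound $S\ge n(n-1)\gthreshold{p}$, and $\gthreshold{p}\ge c_p'\sqrt d$ uniformly in $d$ follows from Lemma~\ref{L.estimate threshold} (your infimum argument just makes the uniformity explicit). The genuine gap is the case $p=1/2$. The paper's standing assumption is $p\le 1/2$, not $p<1/2$, and $p=1/2$ is not a dispensable corner case: Remark~\ref{R.Correspondence} singles it out as the case where the two models coincide, and the upper bounds of Theorems~\ref{T.SClique} and~\ref{T.GClique} at $p=1/2$ rely on this lemma through Proposition~\ref{bayes upper bound}. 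So your claim that it ``is not needed for the main theorems'' is incorrect, and since $\gthreshold{1/2}=0$, your deterministic bound gives nothing there.

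Moreover, the sketch you offer for $p=1/2$ (``CLT for a single pair together with a positive-association estimate'') is precisely the nontrivial content that is missing: there is no off-the-shelf FKG/positive-association inequality for these spherical cap events, and the required estimate is that conditioning on $\gclique$ biases a single inner product upward by order $\sqrt d$ even though the threshold is $0$. The paper supplies this by a bespoke geometric argument: it shows $\bbP(\uclique\mid\inner{\unif_1}{\unif_2}=x)$ is monotonically increasing in $x$, by replacing $\unif_2$ with a point $\unif_2'$ on the geodesic from $\unif_1$ to $\unif_2$ (so that forming a clique with $\unif_2$ implies forming one with $\unif_2'$), which yields $\bbE[\inner{\unif_1}{\unif_2}\mid\uclique]\ge\bbE[\inner{\unif_1}{\unif_2}\mid\inner{\unif_1}{\unif_2}\ge 0]$ and hence $\bbE[\inner{\gauss_1}{\gauss_2}\mid\gclique]\ge c\sqrt d$ for each pair, which is then summed. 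Without an argument of this kind (or some substitute), your proof does not establish the lemma in the full range $p\le 1/2$ in which it is used.
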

\begin{proof}
    If $p<1/2$, we can deduce from Lemma \ref{L.estimate threshold} that 
    $$\bbE[S|\gclique]\ge (n^2-n)\gthreshold{p}\ge c_1n^2\sqrt d.$$
    For the case $p=1/2$, $\gthreshold{p} = 0$, and we use a geometric argument to obtain the result. Note that in this case, the two RGG models coincide. So we first consider the spherical model and prove that $\bbP(\uclique|\inner{\unif_1}{\unif_2}=x)$ is monotonically increasing in $x$. For $0\le x<x' \le 1$, fix $\unif_1,\unif_2$ so that $\inner{\unif_1}{\unif_2}=x$, and choose $\unif_2'$ to lie on the geodesic connecting $\unif_1$ and $\unif_2$ s.t. $\inner{\unif_1}{\unif_2'}=x'$.  By construction, $\unif_2'$ is a convex combination of $\unif_1$ and $\unif_2$, so if $\unif_3,\ldots,\unif_n$ are sampled uniformly at random on $\sphere$, independently of $\unif_1,\unif_2$ and $\unif_2'$, the event that $(\unif_1,\unif_2,\unif_3,\ldots,\unif_n)$ forms a clique implies that $(\unif_1,\unif_2',\unif_3,\ldots,\unif_n)$ also forms a clique. Then we get
    $$\bbP(\uclique|\inner{\unif_1}{\unif_2}=x')\ge \bbP(\uclique|\inner{\unif_1}{\unif_2}=x),$$
    and hence $$\bbE[\inner{\unif_1}{\unif_2}|\uclique]\ge\bbE[\inner{\unif_1}{\unif_2}|\inner{\unif_1}{\unif_2}\ge 0].$$
    Finally, observe that $$\bbE\left[\inner{\gauss_1}{\gauss_2}\Bigg|\gclique\right]=\bbE\norm{\gauss_1}\cdot\bbE\norm{\gauss_2}\cdot\bbE[\inner{\unif_1}{\unif_2}|\uclique]\ge\bbE\left[\inner{\gauss_1}{\gauss_2}\Bigg|\inner{\gauss_1}{\gauss_2}\ge 0\right].$$
    Since $\inner{\gauss_1}{\gauss_2}$ has asymptotic distribution $\mathcal N(0,d)$, the term on the right is bounded below by $c_1\sqrt d$ for some $c_1>0$. Replacing $(\gauss_1,\gauss_2)$ by any pair $(\gauss_i,\gauss_j)$, $i \neq j$ yields the same bound. Summing up over all $i,j$ gives $\bbE[S|\gclique]\ge c_1n^2\sqrt d$.
\end{proof}
Since $S$ is the sum of \iid \ sub-exponential random variables, it concentrates around its mean, and we have the following estimate using Bernstein’s inequality.

\begin{lemma} \label{L.concentration of S}
    There exists a constant $c_2>0$ such that for all $n,d\ge 2$ and $t>0$,
    $$\bbP(|S|\ge t)\le 2\exp\left(-c_2\min\left(\frac{t^2}{n^2d},\frac{t}{n}\right)\right).$$
\end{lemma}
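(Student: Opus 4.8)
The plan is to recognize $S = \sum_{1\le i\ne j\le n}\inner{\gauss_i}{\gauss_j}$ as (up to a factor $2$) a quadratic form in the $nd$ independent standard Gaussian entries $\{\gauss_i^{(k)}\}$, and apply a Hanson–Wright / Bernstein-type inequality. Concretely, writing $G = (\gauss_1,\ldots,\gauss_n)$ as the $n\times d$ Gaussian matrix, one has $\sum_{i\ne j}\inner{\gauss_i}{\gauss_j} = \norm{\sum_i \gauss_i}_2^2 - \sum_i \norm{\gauss_i}_2^2 = \mathbf{1}^\top (GG^\top)\mathbf{1} - \operatorname{tr}(GG^\top)$, so $S$ is a mean-zero quadratic form in a $\cN(0,I_{nd})$ vector with coefficient matrix $M = (\mathbf{1}_n\mathbf{1}_n^\top - I_n)\otimes I_d$. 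Then $\norm{M}_{\mathrm{op}} = \max(n-1,1) \asymp n$ and $\norm{M}_F^2 = n(n-1)d \asymp n^2 d$, and the Hanson–Wright inequality (e.g.\ Theorem 6.2.1 in \cite{vershynin2018high}) gives exactly $\bbP(|S| \ge t) \le 2\exp(-c\min(t^2/\norm{M}_F^2,\ t/\norm{M}_{\mathrm{op}})) = 2\exp(-c_2\min(t^2/(n^2 d),\ t/n))$.

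Alternatively — and this matches the phrasing ``sum of \iid\ sub-exponential random variables'' in the text — one can proceed without matrices by a direct truncation/conditioning argument: condition on $\gauss_1$, so that $S = \sum_{j\ge 2}\inner{\gauss_1}{\gauss_j} + (\text{term not involving }\gauss_1) = \cdots$; more cleanly, group terms as $S = \sum_{i<j} 2\inner{\gauss_i}{\gauss_j}$ and note each $\inner{\gauss_i}{\gauss_j}$ has $\psi_1$-norm $\asymp \sqrt d$. The pairs are not independent, but one decouples: fix $i$ and sum over $j>i$, obtaining $\inner{\gauss_i}{\sum_{j>i}\gauss_j}$, which conditionally on $\gauss_i$ is Gaussian with variance $\norm{\gauss_i}_2^2\cdot(n-i)$; standard Gaussian tail bounds plus concentration of $\norm{\gauss_i}_2^2$ then feed into a union bound. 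I would present the Hanson–Wright route since it is cleanest and gives the stated form immediately.

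The one point requiring care is that \cite{vershynin2018high}'s Hanson–Wright is typically stated for the centered form $\bx^\top M \bx - \bbE[\bx^\top M \bx]$ with a \emph{symmetric} $M$; here $M = (\mathbf{1}\mathbf{1}^\top - I)\otimes I$ is symmetric and $\bbE[\bx^\top M \bx] = \operatorname{tr}(M) = 0$ since the diagonal of $\mathbf{1}\mathbf{1}^\top - I$ vanishes, so $S = \bx^\top M \bx$ is already centered and no adjustment is needed. The only genuine computation is the Frobenius and operator norm of $M$, both of which are immediate from the Kronecker structure ($\norm{A\otimes I_d}_F^2 = d\norm{A}_F^2$, $\norm{A\otimes I_d}_{\mathrm{op}} = \norm{A}_{\mathrm{op}}$, with $A = \mathbf{1}_n\mathbf{1}_n^\top - I_n$ having eigenvalues $n-1$ (once) and $-1$ ($n-1$ times)). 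I expect no real obstacle here; the lemma is essentially a packaging of a standard concentration estimate, and the main thing to get right is simply matching the two norms of $M$ to the two regimes $t^2/(n^2 d)$ and $t/n$ in the exponent.
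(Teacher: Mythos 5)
Your proof is correct, but it takes a genuinely different route from the paper. The paper decomposes $S$ coordinate-wise, writing $S=\sum_{k=1}^{d}\bigl(\sum_{i\neq j}\gauss_i^{(k)}\gauss_j^{(k)}\bigr)$ as a sum of $d$ i.i.d.\ mean-zero random variables, each distributed as $(\sum_{i=1}^n Y_i)^2-\sum_{i=1}^n Y_i^2$ for i.i.d.\ standard normals $Y_i$; a moment estimate shows each has $\psi_1$-norm $O(n)$, and the scalar Bernstein inequality for sums of i.i.d.\ sub-exponential variables then yields $2\exp\bigl(-c\min(t^2/(n^2d),\,t/n)\bigr)$. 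You instead view $S$ globally as the quadratic form $\bx^\top M\bx$ in the $nd$ Gaussian entries with $M=(\mathbf{1}_n\mathbf{1}_n^\top-I_n)\otimes I_d$ and invoke Hanson--Wright; your norm computations ($\norm{M}_F^2=n(n-1)d$, $\norm{M}_{\mathrm{op}}=n-1$) and the observation that $\operatorname{tr}(M)=0$ so no centering is needed are all correct, and the two regimes of the exponent match the two matrix norms exactly as you say. The trade-off is essentially aesthetic: Hanson--Wright gives the stated bound in one line at the cost of citing a heavier (two-sided, matrix-form) concentration result, while the paper's coordinate decomposition is more elementary, reusing only the moment criterion for sub-exponentiality and the same Bernstein inequality already invoked elsewhere in the paper (e.g.\ in Proposition~\ref{P.biaslowerbound}); your fallback decoupling sketch is closer in spirit to neither and is the least developed part, but since you commit to the Hanson--Wright route, the argument as proposed is complete.
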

\begin{proof}
    Note that we can decompose the sum as
    $$S=\sum_{k=1}^{d}\left(\sum_{1\le i\neq j\le n}\gauss_i^{(k)}\gauss_j^{(k)}\right)=:\sum_{k=1}^{d}\Gaussian^{(k)},$$ 
    where $\Gaussian^{(k)}$ are \iid \ mean 0 random variables with the same distribution as $\Gaussian:=(\sum_{i=1}^n \Gaussian_i)^2-\sum_{i=1}^{n}\Gaussian_i^2$, in which $\Gaussian_i$ are \iid \ standard normal variables. For every positive integer $q$, the following moment estimate on $\Gaussian$ holds: 
    $$\norm{\Gaussian}_{L_q}\le\left\|\left(\sum_{i=1}^n \Gaussian_i\right)^2\right\|_{L_q}+\sum_{i=1}^{n}\left\|\Gaussian_i^2\right\|_{L_q}\le 6nq.$$
    which coincides with property (b) in Proposition 2.7.1 in~\cite{vershynin2018high}. It follows that $\Gaussian$ is a sub-exponential variable with $\norm{\Gaussian}_{\psi_1}\lesssim n$. Then Lemma~\ref{L.concentration of S} follows from Bernstein's inequality.
\end{proof}

Lemma \ref{L.S is biased} and Lemma \ref{L.concentration of S} together give the following upper bound.

\begin{proposition}\label{bayes upper bound}
    There exists a constant $c>0$ such that for all $n,d\ge 2$,
    $$\bbP(\gclique)\le\exp\left(-c\min\left(n^2,n\sqrt{d}\right)\right)$$
\end{proposition}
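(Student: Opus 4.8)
The plan is to combine the lower bound on the conditional expectation $\bbE[S \mid \gclique] \ge c_1 n^2 \sqrt d$ from Lemma~\ref{L.S is biased} with the unconditional concentration of $S$ from Lemma~\ref{L.concentration of S}, via a Bayesian (change-of-measure) argument. The key observation is that conditioning on $\gclique$ forces $S$ to be atypically large, but $S$ is a sum of i.i.d.\ sub-exponential random variables that concentrates tightly around $0$; hence $\gclique$ must be a rare event.

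\medskip

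\noindent\textbf{Step 1.} First I would show that $\gclique$ forces $S$ to be large with high conditional probability. Since $S \ge -(n^2 - n)\|\gauss_1\|\|\gauss_2\|$-type trivial bounds are too weak, instead I would argue directly: on the event $\gclique$ we have $\inner{\gauss_i}{\gauss_j} \ge \gthreshold{p}$ for all $i \ne j$, so $S \ge (n^2-n)\gthreshold{p}$ deterministically when $p < 1/2$ (already done inside Lemma~\ref{L.S is biased}), giving $S \ge c_1 n^2 \sqrt d$ pointwise on $\gclique$ for that case. For $p = 1/2$ the threshold is $0$ and this pointwise bound degenerates, so there I would instead use a second-moment / Paley--Zygmund-type argument: Lemma~\ref{L.S is biased} gives $\bbE[S \mid \gclique] \ge c_1 n^2 \sqrt d$, and one can bound $\bbE[S^2 \mid \gclique] \le \bbE[S^2]/\bbP(\gclique)$; but this is circular. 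The cleaner route is: for $p = 1/2$, note $S \ge 0$ is \emph{not} automatic, but $\bbP(\gclique \cap \{S < \tfrac{c_1}{2} n^2\sqrt d\}) \le \bbP(\gclique)$ and I want a lower bound on $\bbP(S \ge \tfrac{c_1}{2}n^2\sqrt d \mid \gclique)$. Actually the slick approach avoids this entirely.

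\medskip

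\noindent\textbf{Step 2 (main argument).} I would write, for a threshold $\tau := \tfrac{c_1}{2} n^2 \sqrt d$,
\begin{align*}
    \bbP(\gclique) = \bbP(\gclique \cap \{S \ge \tau\}) + \bbP(\gclique \cap \{S < \tau\}) \le \bbP(S \ge \tau) + \bbP(\gclique \cap \{S < \tau\}).
\end{align*}
By Lemma~\ref{L.concentration of S} with $t = \tau \asymp n^2\sqrt d$, the first term is at most $2\exp(-c_2 \min(n^2 d \cdot d^{-1}, n^2\sqrt d \cdot n^{-1}))$-type expression, i.e.\ $2\exp(-c\min(n^2, n\sqrt d))$ after simplifying $\min(\tau^2/(n^2 d), \tau/n) = \min(c n^2, c n\sqrt d)$. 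For the second term, I would use that $\bbE[S \mid \gclique] \ge c_1 n^2 \sqrt d = 2\tau$ together with a reverse bound: $\bbE[S\mid\gclique] \le \tau + \bbE[S\,\indicatorevent{S \ge \tau}\mid\gclique]$, and then $\bbE[S\,\indicatorevent{S\ge\tau}\mid\gclique] \le \bbE[S\,\indicatorevent{S\ge\tau}]/\bbP(\gclique)$ (using $S\,\indicatorevent{S\ge\tau} \ge 0$ when $\tau>0$, which needs $S$ bounded below on the relevant range --- valid since we only integrate over $S\ge\tau>0$). Since $S$ has sub-exponential tails by Lemma~\ref{L.concentration of S}, $\bbE[S\,\indicatorevent{S\ge\tau}] = \int_\tau^\infty \bbP(S\ge t)\,dt + \tau\bbP(S\ge\tau) \le C\exp(-c\min(n^2,n\sqrt d))$ by integrating the tail bound. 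Rearranging $2\tau - \tau \le \bbE[S\,\indicatorevent{S\ge\tau}\mid\gclique] \le \bbE[S\,\indicatorevent{S\ge\tau}]/\bbP(\gclique)$ yields $\bbP(\gclique) \le \bbE[S\,\indicatorevent{S\ge\tau}]/\tau \le \frac{C}{\tau}\exp(-c\min(n^2,n\sqrt d)) \le \exp(-c'\min(n^2,n\sqrt d))$ for a slightly smaller $c'$, absorbing the polynomial factor $1/\tau$.

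\medskip

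\noindent\textbf{Main obstacle.} The delicate point is handling the truncated first moment bound $\bbE[S\,\indicatorevent{S\ge\tau}\mid\gclique]$ rigorously --- in particular justifying the passage $\bbE[S\,\indicatorevent{S\ge\tau}\mid\gclique]\le\bbE[S\,\indicatorevent{S\ge\tau}]/\bbP(\gclique)$, which requires the integrand to be nonnegative (true since $\tau>0$) and then carefully integrating the two-regime tail bound of Lemma~\ref{L.concentration of S} to confirm $\bbE[S\,\indicatorevent{S\ge\tau}]$ decays at the same exponential rate as $\bbP(S\ge\tau)$ rather than being dominated by the polynomial prefactor. The regime split in $\min(t^2/(n^2 d), t/n)$ --- the quadratic (Gaussian) regime for $t \lesssim n\sqrt d \cdot \sqrt d = n d^{?}$ versus the linear (exponential) regime --- needs to be tracked so that plugging $t\asymp n^2\sqrt d$ correctly gives $\min(n^2, n\sqrt d)$; one checks $\tau^2/(n^2 d) \asymp n^2$ and $\tau/n \asymp n\sqrt d$, and the crossover $n^2$ vs $n\sqrt d$ is exactly the stated $\min$. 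Everything else is bookkeeping with constants depending only on $p$.
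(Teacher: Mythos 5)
Your Step 2 is essentially the paper's own argument: the paper bounds $\bbP(\gclique)\le\frac{1}{a-b}\,\bbE\left[(S-b)^{+}\right]$ with $b=\tfrac12\bbE[S\mid\gclique]$ (a reverse-Markov inequality), which is the same change-of-measure you perform with the truncated moment $\bbE\left[S\,\indicatorevent{S\ge\tau}\right]$, followed by the identical integration of the two-regime Bernstein tail from Lemma~\ref{L.concentration of S} and absorption of polynomial prefactors. Your Step 1 (and the initial union-bound split) is unnecessary, as you yourself note, but the core proof is correct and matches the paper's.
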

\begin{proof}
    We use the following extension of Markov's inequality for the tail of positive random variables. Suppose $A$ is measurable and $\xi$ is a real valued random variable such that $a =\mathbb{E} [\xi | A]$ is finite. Then for any $b < a$,
    \begin{align}
    (a - b) \mathbb{P} (A) =\mathbb{E} [(\xi - b) 1_A] \leq \bbE (\xi-b)^+  \Rightarrow \mathbb{P} (A) \leq \frac{1}{a - b} \mathbb{E} (\xi-b)^+, \label{E.markov-ineq.event_cond-expect}
    \end{align}
    where $(x)^+ = \max(x,0)$.
    Take $A = \tilde{\mathcal{E}}$, $\xi = S$ and $b = \frac{a}{2}$. By \Cref{L.S is biased}, since $a = \bbE[S|\gclique]$, $b=\ge c_1n^2\sqrt d/2$. Using \Cref{L.concentration of S} in the first line below, the RHS of \eqref{E.markov-ineq.event_cond-expect} can be bounded by
    \begin{align*}
        \bbE(S-b)_+&\le\int_{b}^{+\infty}(2\exp(-c_2t/n)+2\exp(-c_2t^2/n^2d)) dt\\
        &\le\frac{2n}{c_2}\exp(-c_2b/n)+\frac{n^2d}{cb}\exp(-c_2b^2/n^2d)\\
        &\le C(n+\sqrt d)\exp\left(-c\min\left(n^2,n\sqrt{d}\right)\right)
    \end{align*}
    for some $c > 0$. Therefore 
    $$\bbP(\gclique)\le \frac{1}{a-b}\bbE[(S-b)_{+}] \le \exp \left(-(c+o(1))\min\left(n^2,n\sqrt{d}\right) \right).$$
\end{proof}

\begin{remark}
    In principle, one can consider the same argument in the spherical model and use the statistic $\sum_{i\neq j}\inner{\unif_i}{\unif_j}$ to derive a similar upper bound. But in the spherical model, $\sum_{i\neq j}\inner{\unif_i}{\unif_j}$ is not the sum of \iid \ random variables, which presents a minor obstacle. So we chose to obtain the bounds for the Gaussian model and rely on Lemma~\ref{L.coupling} to get bounds of the same order for the spherical model.
\end{remark}
Another way of dealing with the upper bound is to analyze the convex set-valued random process $A_k$, defined in the proof of Theorem \ref{P.ERlowerbound}. If we define $H_k$ be the induced subgraph of $\srgg(n,p)$ on the last $k$ vertices $n-k+1,\ldots,n$. Then we have the inequality
\begin{align*}
    \bbP(\uclique)\le\bbP\left(\{H_{n-k}\ge \uthreshold{p,d}\1_{n-k}\}\cap\bigcap_{i=k+1}^n\{\unif_i\in A_k\}\right)
\end{align*}
This reduces analyzing $\uclique$ to the study of the random process $A_k$, which is still quite complicated. The following proposition allows us to compare $A_k$ with its symmetric rearrangement $\symm{A_k}$, which is easier to analyze.
Define $\scapprocess_0=\sphere$, and iteratively define $\scapprocess_{k+1}:=\symm{(\scapprocess_k\cap\scap_{X_{k+1}}^p)}$. 
It is a sequence of spherical caps all centered at $\eb$. 
\begin{proposition}\label{P.upper bound by cap process}
    For any $0\le k\le n$, we have
    \begin{equation}
    \label{E.upper bound by cap process}
    \bbP(\uclique)\le\bbP\left(\{H_{n-k}\ge\uthreshold{p,d}\1_{n-k}\}\cap\bigcap_{i=k+1}^{n}\{\unif_i\in\scapprocess_k\}\right)
    \end{equation}
\end{proposition}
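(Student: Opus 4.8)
The plan is to show that the right-hand side of \eqref{E.upper bound by cap process}, which I denote $R_k$, is non-decreasing in $k$ and satisfies $R_0 = \bbP(\uclique)$; the proposition then follows from the chain $\bbP(\uclique) = R_0 \le R_1 \le \cdots \le R_k$. I argue by induction on $k$. The base case $k=0$ is an identity: the constraints $\{\unif_i \in \scapprocess_0\} = \{\unif_i \in \sphere\}$ are vacuous, and $H_n$ is the induced subgraph on all $n$ vertices, so $\{H_n \ge \uthreshold{p,d}\1_n\}$ coincides with $\uclique$.

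For the inductive step $R_k \le R_{k+1}$, I first rewrite $R_k$ by singling out the vertex $k+1$ among the active vertices $k+1,\ldots,n$. Splitting the edges of $H_{n-k}$ into those incident to $k+1$ and those lying inside $\{k+2,\ldots,n\}$ gives the deterministic identity
\[
\{H_{n-k}\ge\uthreshold{p,d}\1_{n-k}\}=\{H_{n-k-1}\ge\uthreshold{p,d}\1_{n-k-1}\}\cap\bigcap_{j=k+2}^{n}\{\unif_j\in\scap^p_{\unif_{k+1}}\},
\]
and combining this with $\bigcap_{i=k+1}^{n}\{\unif_i\in\scapprocess_k\}=\{\unif_{k+1}\in\scapprocess_k\}\cap\bigcap_{i=k+2}^{n}\{\unif_i\in\scapprocess_k\}$ yields
\[
R_k=\bbP\left(\{\unif_{k+1}\in\scapprocess_k\}\cap\{H_{n-k-1}\ge\uthreshold{p,d}\1_{n-k-1}\}\cap\bigcap_{i=k+2}^{n}\{\unif_i\in\scapprocess_k\cap\scap^p_{\unif_{k+1}}\}\right).
\]

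Next I condition on $\cF_{k+1}:=\sigma(\unif_1,\ldots,\unif_{k+1})$ and set $D:=\scapprocess_k\cap\scap^p_{\unif_{k+1}}$. Since $\scapprocess_k$ is $\sigma(\unif_1,\ldots,\unif_k)$-measurable, both $D$ and the indicator $\indicatorevent{\unif_{k+1}\in\scapprocess_k}$ are $\cF_{k+1}$-measurable, whereas $\unif_{k+2},\ldots,\unif_n$ stay i.i.d.\ uniform on $\sphere$ and independent of $\cF_{k+1}$; hence, conditionally on $\cF_{k+1}$, the set $D$ is fixed and $H_{n-k-1}$ is an edge-weighted random geometric graph on the $n-k-1$ vertices $k+2,\ldots,n$. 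Applying \Cref{P.sym-dom.proba} to these vertices with every constraint set equal to $D$ and with $H_0=\uthreshold{p,d}\1_{n-k-1}$ bounds the conditional probability of $\{H_{n-k-1}\ge\uthreshold{p,d}\1_{n-k-1}\}\cap\bigcap_{i=k+2}^{n}\{\unif_i\in D\}$ by the same conditional probability with $D$ replaced by its symmetric rearrangement $\symm{D}=\symm{(\scapprocess_k\cap\scap^p_{\unif_{k+1}})}=\scapprocess_{k+1}$. Multiplying by $\indicatorevent{\unif_{k+1}\in\scapprocess_k}$, bounding this indicator by $1$, and integrating over $\cF_{k+1}$ then gives
\[
R_k\le\bbP\left(\{H_{n-k-1}\ge\uthreshold{p,d}\1_{n-k-1}\}\cap\bigcap_{i=k+2}^{n}\{\unif_i\in\scapprocess_{k+1}\}\right)=R_{k+1},
\]
which closes the induction.

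The step I expect to be the main obstacle is reconciling the randomness of the constraint set: \Cref{P.sym-dom.proba} is stated only for deterministic sets, whereas $\scapprocess_k\cap\scap^p_{\unif_{k+1}}$ is random. The key is that conditioning on $\cF_{k+1}$ simultaneously freezes this set and identifies its symmetric rearrangement with the (conditionally fixed) cap $\scapprocess_{k+1}$, while leaving $\unif_{k+2},\ldots,\unif_n$ i.i.d.\ uniform, so the conditional bound integrates cleanly back to $R_{k+1}$. A minor point to get right is that the one-step rearrangement acts only on the constraints attached to the vertices $k+2,\ldots,n$; the leftover constraint $\{\unif_{k+1}\in\scapprocess_k\}$ is simply discarded at the cost of the inequality.
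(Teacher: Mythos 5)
Your proof is correct and follows essentially the same route as the paper: show the right-hand side is non-decreasing in $k$ by peeling off vertex $k+1$, dropping the constraint $\{\unif_{k+1}\in\scapprocess_k\}$, and applying \Cref{P.sym-dom.proba} to replace $\scapprocess_k\cap\scap^p_{\unif_{k+1}}$ by $\scapprocess_{k+1}$, with equality at $k=0$. Your explicit conditioning on $\sigma(\unif_1,\ldots,\unif_{k+1})$ to freeze the random constraint set is a welcome clarification of a step the paper leaves implicit, but it is the same argument.
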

\begin{proof}
    Note that for any $0\le k\le n-1$,
    \begin{align*}
        &\bbP\left(\{H_{n-k}\ge \uthreshold{p,d}\1_{n-k}\}\cap\bigcap_{i=k+1}^{n}\{\unif_i\in\scapprocess_{k}\}\right)\\
        ={}&\bbP\left(\{H_{n-k-1}\ge \uthreshold{p,d}\1_{n-k-1}\}\cap\bigcap_{i=k+1}^{n}\left\{\unif_i\in\left(\scapprocess_{k}\cap\scap^p_{\unif_{k+1}}\right)\right\}\right)\\
        \le{}&\bbP\left(\{H_{n-k-1}\ge \uthreshold{p,d}\1_{n-k-1}\}\cap\bigcap_{i=k+2}^{n}\left\{\unif_i\in\left(\scapprocess_{k}\cap\scap^p_{\unif_{k+1}}\right)\right\}\right)\\
        \le{}&\bbP\left(\{H_{n-k-1}\ge \uthreshold{p,d}\1_{n-k-1}\}\cap\bigcap_{i=k+2}^{n}\{\unif_i\in\scapprocess_{k+1}\}\right).
    \end{align*}
    Here, in the first inequality we drop $\left\{\unif_{k+1}\in\left(\scapprocess_{k}\cap\scap^p_{\unif_{k+1}}\right)\right\}$, and in the second inequality, we apply Proposition \ref{P.sym-dom.proba}. This proves that the right side of \eqref{E.upper bound by cap process} is non-decreasing in $k$. Note that the two side are equal when $k=0$, hence \eqref{E.upper bound by cap process} holds for all $0\le k\le n$.
\end{proof}

Defining $\eta_k:=\sumeasure(\scapprocess_k)$, and dropping the event $\{H_{n-k}\ge\uthreshold{p,d}\1_{n-k}\}$ in the RHS of~\eqref{E.upper bound by cap process}, we get that \begin{equation}
    \bbP(\uclique)\le\bbE(\eta_k^{n-k}).\label{E.upper bound by cap process 2}
    \end{equation}
$(\eta_k)_{k \ge 0}$ is a random, decreasing process, and the following lemma quantifies the difference $\eta_k - \eta_{k+1}$. Recall that $h(\eta)$ is the \emph{altitude} of a spherical cap with measure $\eta.$    

\begin{lemma}\label{L.fast decay} 
    For any $\eta \ge 0$, let $\eta' = \umeasure{d-1}(\scap^\eta_{\eb} \cap \scap^p_Y)$, where $Y$ is uniformly distributed on $\sphere$. Then there exists $c,c_0>0$ s.t. for any $\delta\in(0,c)$ and $\eta\in[\exp(-c_0d\delta^2/2),1/2]$,
    $$\bbP\left(\eta-\eta'\ge c\delta\exp\left(-\frac{d}{2}(1+3\delta^2)h(\eta)^2\right)\right)\ge 1-\exp(-d\delta^2/2).$$
\end{lemma}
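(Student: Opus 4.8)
The plan is to condition on $Y$ and reduce everything to a deterministic estimate. By rotational symmetry of $\mu$ about the axis $\eb$ we may take $Y=(\gamma,\sqrt{1-\gamma^2},0,\dots,0)$, where $\gamma=\inner Y\eb$ has the one‑dimensional marginal law of $\mu$; write $h=h(\eta)$, so $\scap^\eta_\eb=\{w:w^{(1)}\ge h\}$, and recall $\uthreshold p=\Psi_d^{-1}(p)\ge0$ because $p\le1/2$. The \emph{good} event is $\{\gamma\le\gamma^\ast\}$ with $\gamma^\ast=(1+o(1))\delta$ chosen, via the standard spherical‑cap tail bound $\utaild d x\le e^{-(1-o(1))dx^2/2}$, so that $\bbP(\gamma>\gamma^\ast)\le e^{-d\delta^2/2}$; on this event we prove the claimed bound for every admissible value of $\gamma$. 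Two facts recorded at the outset: from $\eta\ge e^{-c_0d\delta^2/2}$ and the same tail bound applied at $x=h$ we get $h^2\le(1+o(1))c_0\delta^2$ (so $h\ll\delta$ once $c_0$ is small), and $0\le\uthreshold p=O(d^{-1/2})$ by Lemma~\ref{L.estimate threshold}.

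Slicing $\scap^\eta_\eb\setminus\scap^p_Y$ by the first coordinate, and using that conditionally on $w^{(1)}=s$ the remaining coordinates of $w$ are $\sqrt{1-s^2}$ times a uniform point of $\mathbb S^{d-2}$, gives
\[
\eta-\eta'=\int_h^1 f_d(s)\,\utaild{d-1}{a_s}\,ds,\qquad a_s:=\frac{s\gamma-\uthreshold p}{\sqrt{(1-s^2)(1-\gamma^2)}}.
\]
I would then keep only the thin equatorial‑side layer $s\in[h,h+\ell]$ with $\ell=\varepsilon_0/(dh)$ for a small absolute constant $\varepsilon_0$: over such a layer a cap loses only a bounded fraction of its mass, so $\int_h^{h+\ell}f_d\gtrsim\eta$; and since $\tfrac{d}{ds}a_s=O(\delta)$ and $-\uthreshold p\le0$, we have $a_s\le(1+\varepsilon_0)(1+o(1))\,\delta h$ throughout the layer (or $a_s<0$, which only helps). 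Hence, as $\utaild{d-1}{\cdot}$ is decreasing,
\[
\eta-\eta'\ \gtrsim\ \eta\cdot\utaild{d-1}{(1+\varepsilon_0)(1+o(1))\,\delta h}.
\]

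To finish I would insert two‑sided Mills‑type estimates $\utaild d h\gtrsim(1+h\sqrt d)^{-1}e^{-(1+o(1))dh^2/(2(1-h^2))}$ and $\utaild{d-1}{x}\gtrsim(1+x\sqrt d)^{-1}e^{-(1+o(1))dx^2/(2(1-x^2))}$, use $h^2\le(1+o(1))c_0\delta^2$, and compare with the target $c\delta\,e^{-\frac d2(1+3\delta^2)h^2}$. Multiplying the two exponentials gives an exponent of the shape $-\frac{dh^2}{2}\big(1+2h^2+(1+\varepsilon_0)^2\delta^2+o(\delta^2)\big)$, which for $\varepsilon_0$ and $c_0$ small enough beats the target exponent $-\frac{dh^2}{2}(1+3\delta^2)$ by $\Omega(d\delta^2 h^2)$ — this is precisely where the slack $1+3\delta^2$ (rather than $1$) is used, since one needs $(1+\varepsilon_0)^2+2c_0<3$ up to $o(1)$. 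Writing $t=\delta h\sqrt d$, the surviving factor $e^{\Omega(t^2)}$ then dominates the leftover rational factor $\big(\delta(1+h\sqrt d)(1+t)\big)^{-1}$ via the elementary inequality $e^{\beta t^2}\ge\beta e\,t^2$, so the whole ratio is $\ge1$ once $c$ (and then $c_0,\varepsilon_0$) are chosen small. The regime $dh^2<1$ — where $\eta$ is bounded away from $0$ while the target is $\le c\delta\le c^2$ — is split off and handled directly: on the good event the constant‑measure sub‑slab $\{h\le w^{(1)}\le h+d^{-1/2},\ w^{(2)}\le -d^{-1/2}\}$ of $\scap^\eta_\eb$ lies outside $\scap^p_Y$ because there $\inner wY<0\le\uthreshold p$.

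The step I expect to be the main obstacle is exactly this final comparison. The half‑space $\{\inner wY\ge\uthreshold p\}$ defining $\scap^p_Y$ does not cut $\scap^\eta_\eb$ through its center direction but is displaced: on the boundary layer the effective threshold for $w^{(2)}$ is $\uthreshold p-\gamma s$, shifted by up to $\delta h$, and this costs an exponential factor $e^{-\Theta(d\delta^2 h^2)}$ in $\eta-\eta'$. One must verify that this cost is strictly smaller than the $e^{-\frac32 d\delta^2 h^2}$ of slack built into the statement, which is what forces $h/\delta$, and hence $c_0$, to be small and also constrains the layer width $\varepsilon_0$. A secondary technical nuisance is making the ``thin layer carries a constant fraction of $\eta$'' estimate (and the lower‑order terms in the Mills bounds) uniform over all $h\ge0$ and $d\ge2$, which is the reason the regime $dh^2<1$ is treated separately.
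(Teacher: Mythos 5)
Your proposal is correct and rests on the same core mechanism as the paper's proof: only $Y^{(1)}$ matters, one restricts to $\{Y^{(1)}\lesssim\delta\}$, whose complement has probability at most $\exp(-d\delta^2/2)$, and on that event one exhibits a portion of $\scap^{\eta}_{\eb}$ that misses $\scap^p_Y$ because there $\inner{w}{Y}<0\le\uthreshold{p}$ (this is where $p\le 1/2$ enters in both arguments), with the $3\delta^2$ slack absorbing the $O(\delta h)$ displacement of the cutting half-space together with the constraint $h(\eta)\lesssim\sqrt{c_0}\,\delta$. The execution differs: the paper uses monotonicity of $\eta'$ in $Y^{(1)}$ to reduce to the extremal case $Y=(\delta,-\sqrt{1-\delta^2},0,\ldots,0)$ and then lower-bounds the excluded region by a single explicit two-dimensional wedge integral over $\{x>h(\eta),\ y>\delta x/\sqrt{1-\delta^2}\}$, obtaining $c\delta\left(1-(1+2\delta^2)h(\eta)^2\right)^{d/2}$ in one computation, uniformly in $h$ and with no Mills-type prefactor bookkeeping; you instead slice along the first coordinate, keep a boundary layer of width $\asymp(dh)^{-1}$, invoke two-sided estimates (Lemma~\ref{sodin's Lemma}) for both $\utaild{d}{\cdot}$ and $\utaild{d-1}{\cdot}$, and split off $dh^2<1$. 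Both routes work, the paper's being shorter. One step of yours needs a correction rather than just care: the claim $\int_h^{h+\ell}f_d\gtrsim\eta$ with an absolute constant is not uniformly true, since comparing $(1-x^2)^{(d-3)/2}$ with its Gaussian surrogate costs a factor $e^{-O(dh^4)}$ and $dh^4$ may diverge; but $dh^4=O(c_0\delta^2\cdot dh^2)$, so this loss is exactly of the type your exponent accounting (the ``$+2h^2$'' term and the condition $(1+\varepsilon_0)^2+2c_0<3$) already absorbs — either route that step through the same $\delta^2$-slack budget or bound $\int_h^{h+\ell}f_d$ directly by $\ell\,f_d(h+\ell)$, as the rest of your argument effectively does.
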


To prove Lemma \ref{L.fast decay} we use the following distributional approximation result of Sodin \cite{sodin2007tail}.

\begin{lemma}\label{sodin's Lemma}
    There exists constants $C,C_1,C_2>0$  and a sequence $\varepsilon_d\searrow 0$ such that for all $t \in [0,C]$,
    $$(1-\varepsilon_d)\gtail{\sqrt d t}\exp(-C_1dt^4)\le\utail{t}\le(1+\varepsilon_d)\gtail{\sqrt dt}\exp(-C_2dt^4).$$
    In particular, if $t=O(d^{-1/4})$, then 
    $$\utail{t}\asymp\frac{1}{\sqrt dt}\exp(-dt^2/2).$$
\end{lemma}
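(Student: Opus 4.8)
\emph{Proof proposal.} Writing $\utaild{d}{t}=c_d\int_t^1(1-x^2)^{(d-3)/2}\,dx$ with $c_d=\Gamma(d/2)/(\sqrt\pi\,\Gamma((d-1)/2))$, this is a one-dimensional tail estimate, and the first display is (a restatement of) Sodin's tail comparison \cite{sodin2007tail}; the shortest route is to cite it and prove only the ``in particular'' clause, but I would also give a self-contained argument. Two preliminary inputs: Stirling's formula for ratios of Gamma functions gives $c_d=\sqrt{d/(2\pi)}\,(1+O(1/d))$, matching the Gaussian normalising constant up to $1+O(1/d)$; and from $-\log(1-u)=\sum_{k\ge1}u^k/k$ with $u=x^2$ one gets $x^2+\tfrac12x^4\le-\log(1-x^2)$ for all $x\in[0,1)$ and $-\log(1-x^2)\le x^2+x^4$ for $x\in[0,1/\sqrt2]$, so that $e^{-\frac{d-3}{2}(x^2+x^4)}\le(1-x^2)^{(d-3)/2}\le e^{-\frac{d-3}{2}(x^2+\frac12x^4)}$ on $[0,1/\sqrt2]$, the upper bound being valid on all of $[0,1)$.

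\emph{Bulk estimate.} For $t$ with $\sqrt d\,t\to\infty$, a Laplace estimate at the left endpoint (the phase $g(x)=-\log(1-x^2)$ has $g'(x)=2x/(1-x^2)>0$, and the integral concentrates near $x=t$) gives, uniformly in $t$ bounded, $\utaild{d}{t}=(1+o(1))\,c_d(1-t^2)^{(d-1)/2}/((d-3)t)$, with relative error $O(1/(dt^2))$. Unwinding $c_d$ and $(1-t^2)^{(d-1)/2}$ and comparing with $\gtail{\sqrt d\,t}=(1+o(1))\,\tfrac1{\sqrt d\,t}\tfrac1{\sqrt{2\pi}}e^{-dt^2/2}$, this becomes $\utaild{d}{t}=(1+o(1))\,\gtail{\sqrt d\,t}\,e^{\frac12 t^2-\frac{d-1}{4}t^4(1+O(t^2))}$. (A cruder bound, extending the integral to $[t,\infty)$ and using $(1-x^2)^{(d-3)/2}\le e^{-\frac{d-3}{2}x^2-\frac{d-3}{4}x^4}\le e^{-\frac{d-3}{2}x^2-\frac{d-3}{4}t^4}$ for $x\ge t$, also yields the clean upper bound $\utaild{d}{t}\le(1+O(1/d))(1+O(t^2+t/\sqrt d))\,\gtail{\sqrt d\,t}\,e^{-\frac{d-3}{4}t^4}$ valid for every $t\le C$, via the Mills-ratio comparison of $\gtail{\sqrt{d-3}\,t}$ with $\gtail{\sqrt d\,t}$.)

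\emph{Uniformity, small $t$, and the corollary.} The bulk display has the form $\utaild{d}{t}=\gtail{\sqrt d\,t}\,e^{O(t^2)-\frac14 dt^4(1+O(t^2))}\,(1+o(1))$; to turn it into $(1\pm\varepsilon_d)\gtail{\sqrt d\,t}e^{-C_i dt^4}$ uniformly in $t\in[0,C]$, take $C_1$ strictly above $\tfrac14$ and $C_2$ strictly below $\tfrac14$ (e.g. $C_1=1$, $C_2=\tfrac18$): for the lower bound the exponent $\tfrac12 t^2-\tfrac14 dt^4(1+O(t^2))+C_1 dt^4\ge\tfrac12 t^2\ge 0$ directly once $C$ is small, and for the upper bound one splits into $dt^2\ge K$ (where $\tfrac14 dt^4$ dominates the $O(t^2)$) and $dt^2<K$ (where $e^{O(t^2)}=1+O(1/d)$), so $\varepsilon_d=O(1/d)$ suffices. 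The Laplace estimate degenerates when $\sqrt d\,t=O(1)$; there I would instead use the local limit estimate $\tfrac1{\sqrt d}f_d(u/\sqrt d)=\tfrac1{\sqrt{2\pi}}e^{-u^2/2}(1+O((1+u^4)/d))$, uniform for $u$ in a fixed compact, which gives $\utaild{d}{s/\sqrt d}=\gtail{s}(1+o(1))$ uniformly for $s$ in a compact (cut the $u$-integral at a slowly growing $K=K(d)\to\infty$ and bound the overshoot $\utaild{d}{K/\sqrt d}$ by the already-proved upper bound); since $dt^4\to0$ in this range, $e^{-C_i dt^4}=1-o(1)$ and the bound holds with $\varepsilon_d=o(1)$. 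The far tail is harmless: $c_d\int_{1/\sqrt2}^1(1-x^2)^{(d-3)/2}dx\le\sqrt d\,2^{-(d-3)/2}=e^{-\Omega(d)}$, which is $o\big(\utaild{d}{C}\big)$ since $\utaild{d}{C}\ge e^{-O(dC^2)}$ for $C$ small. Finally, if $t=O(d^{-1/4})$ then $dt^4=O(1)$, so $e^{-C_i dt^4}\asymp1$ and $\utaild{d}{t}\asymp\gtail{\sqrt d\,t}$; combining with $\gtail{a}\asymp a^{-1}e^{-a^2/2}$ (valid for $a\gtrsim1$, i.e. $t\gtrsim d^{-1/2}$) gives $\utaild{d}{t}\asymp(\sqrt d\,t)^{-1}e^{-dt^2/2}$.

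\emph{Main obstacle.} The delicate point is uniformity of the constant $1\pm\varepsilon_d$ over all $t\in[0,C]$: crude estimates leak an absolute constant (both from capturing only a constant share of the Gaussian tail near $x=t$, and from the genuine $e^{t^2/2}$ gap between $\utaild{d}{\cdot}$ and the Gaussian), so one must use the \emph{precise} endpoint-Laplace asymptotic in the bulk, exploit the slack $C_2<\tfrac14<C_1$ together with the split $dt^2\gtrless K$ to absorb the $e^{O(t^2)}$ factor, and treat $\sqrt d\,t=O(1)$ by a separate local-CLT argument — assembling all the error terms into a single $\varepsilon_d\to0$ is the main bookkeeping effort.
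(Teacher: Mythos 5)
The paper does not prove this lemma at all: it is quoted verbatim as a distributional approximation result of Sodin and used as a black box, so there is no internal proof to compare against. Your self-contained argument is sound and follows the natural route: Stirling for the normalising constant $c_d=\sqrt{d/(2\pi)}(1+O(1/d))$, the two-sided bound $x^2+\tfrac12x^4\le-\log(1-x^2)\le x^2+x^4$ on $[0,1/\sqrt2]$, an endpoint-Laplace evaluation $\utaild{d}{t}=(1+o(1))c_d(1-t^2)^{(d-1)/2}/((d-3)t)$ in the bulk $\sqrt d\,t\to\infty$, a local-CLT patch for $\sqrt d\,t=O(1)$, and the observation that choosing $C_2<\tfrac14<C_1$ leaves enough slack in $e^{\pm(C_i-\frac14)dt^4}$ to absorb the $e^{O(t^2)}$ discrepancy via the split $dt^2\gtrless K$ — that last point is exactly the right mechanism for getting a single $\varepsilon_d\to0$ uniformly over $t\in[0,C]$, and you are right that it is the only delicate step. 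One caveat worth recording: the ``in particular'' clause as literally stated is false for $t=o(d^{-1/2})$ (at $t=0$ the left side is $1/2$ while the right side diverges); your restriction to $t\gtrsim d^{-1/2}$, where the Mills-ratio asymptotic $\gtail{a}\asymp a^{-1}e^{-a^2/2}$ applies, is the correct reading and is consistent with every use of the lemma in the paper (always with $\sqrt d\,t\to\infty$). So your proposal is a complete, correct replacement for the external citation, at the cost of a nontrivial amount of bookkeeping that the paper avoids by citing \cite{sodin2007tail}.
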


\begin{proof}[Proof of Lemma \ref{L.fast decay}]
Note that $\eta'$ depends only on $Y^{(1)}$, the first coordinate of $Y$. Recall the density of $Y^{(1)}$ is $$f_d(x) = \frac{\Gamma(d/2)}{\Gamma((d-1)/2)\sqrt{\pi}} (1 - x^2)^{(d-3)/2}.$$ 
From Lemma \ref{sodin's Lemma}, we know that $\bbP(Y^{(1)}>\delta)\le\exp(-d\delta^2/2)$. Next we show that if $Y^{(1)} \le\delta$, then $\eta-\eta'\ge c\delta\exp\left(-\frac{d}{2}(1+3\delta^2)h(\eta)^2\right)$. Since $\eta'$ is monotonically increasing in $Y^{(1)}$, it suffices to consider $Y=(\delta, -\sqrt{1-\delta^2},0,\ldots,0)$. Note that $\{(x,y,\ldots)\in\mathbb S^{d-1}:x>h(\eta),~y >\delta x /\sqrt{1-\delta^2}\} \subseteq \scap^\eta_{\eb} \setminus \scap^p_{Y}$. Further, the 2 dimensional marginal of $\umeasure{d-1}$ on the first two coordinates has density function $$f_d(x,y)=\frac{d-2}{2\pi}(1-x^2-y^2)^{(d-4)/2}.$$
By direct computation, when $Y^{(1)}\le\delta$ we have
\begin{align*}
    \sumeasure(\scap^\eta_{\eb}\setminus\scap^p_{Y^{(1)}})&\ge\int_{x^2+y^2\le1}\mathrm 1_{\{x>h(\eta),~y>\delta/\sqrt{1-\delta^2}x\}}f_d(x,y)dxdy\\
    &\ge cd\int_{h(\eta)}^{1/2}\left(\int_{\delta x/\sqrt(1-\delta^2)}^{\sqrt{1-x^2}}(1-x^2-y^2)^{(d-4)/2}dy\right)dx\\  &=cd\int_{h(\eta)}^{1/2}(1-x^2)^{(d-3)/2}\left(\int_{\delta x/\sqrt{(1-\delta^2)(1-x^2)}}^1(1-t^2)^{(d-4)/2}dt\right)dx\\
    &\ge cd\int_{h(\eta)}^{1/2}(1-x^2)^{d/2}\delta x(1-2\delta^2x^2)^{d/2}dx\\
    &\ge c\delta\left(1-(1+2\delta^2)h(\eta)^2\right)^{d/2}\\
    &\ge c\delta\exp\left(-\frac{d}{2}(1+3\delta^2)h(\eta)^2\right).
\end{align*}
For the last two inequalities to hold, we need $h(\eta)<c'\delta$ for some constant $c'$ small enough. This can be achieved by taking $c_0$ to be sufficiently small in the assumption, which completes the proof.
\end{proof}
Using Lemma~\ref{L.fast decay}, we obtain the following two rates of decay of the sequence $(\eta_n)_{n \ge 1}$ as $n$ goes to infinity, for different regimes of $d$. 
\begin{proposition}\label{P.shrinking 1}
    For a sufficiently small constant $c>0$ and $\log n \lesssim d \lesssim n^{4/3}$,
    $$\bbP\left(\eta_n\ge\exp\left(-c\sqrt{d}\cdot\frac{\log n-\log d^{3/4}}{\sqrt{\log n}}\right)\right) \le \exp \left(-cn\sqrt{d\log n}\right).$$
\end{proposition}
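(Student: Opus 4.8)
The plan is to run the cap‑process estimate behind \Cref{P.upper bound by cap process} and \eqref{E.upper bound by cap process 2}. Set $\ell:=c\sqrt d\,(\log n-\log d^{3/4})/\sqrt{\log n}$, $\tau:=e^{-\ell}$ and $L_k:=\log(1/\eta_k)$, so that $\{\eta_n\ge\tau\}=\{L_n\le\ell\}$; since $\eta_1=p\le 1/2$ and $(\eta_k)$ is non‑increasing, this event is empty unless $\tau\le 1/2$, which I assume, and then every $\eta_k$ ($1\le k\le n$) lies in $[\tau,1/2]$ on it. I would apply \Cref{L.fast decay} with the scaling $\delta=\delta_n:=K(\log n/d)^{1/4}$ for a small absolute constant $K$. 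As $\log(n/d^{3/4})\le\log n$ gives $\ell\le c\sqrt{d\log n}$, while $c_0 d\delta^2/2=\tfrac12 c_0K^2\sqrt{d\log n}$, choosing $c$ small relative to $c_0,K$ makes $\tau\ge\exp(-c_0d\delta^2/2)$, so \Cref{L.fast decay} (with this $\delta$) is applicable at every $\eta_k\in[\tau,1/2]$.

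Call step $k$ ($1\le k\le n-1$) \emph{good} if $\eta_k-\eta_{k+1}\ge\kappa\delta\exp(-\tfrac d2(1+3\delta^2)h(\eta_k)^2)$, where $\kappa$ is the constant of \Cref{L.fast decay}; this fails with conditional probability at most $\exp(-d\delta^2/2)$ given $X_1,\dots,X_k$, provided $\eta_k\in[\tau,1/2]$. The first, deterministic, task is to bound the number of good steps forcing $\eta_n<\tau$. Using the tail bound $\utaild{d}{t}\le 2e^{-dt^2/2}$ (the upper half of \Cref{sodin's Lemma}) at $t=h(\eta_k)$ gives $\eta_k\le 2e^{-dh(\eta_k)^2/2}$ and hence $d\,h(\eta_k)^2\le 4L_k$ for $k\ge1$; dividing the good‑step inequality by $\eta_k$ and using $-\log(1-x)\ge x$,
\[
L_{k+1}-L_k\ \ge\ \frac{\eta_k-\eta_{k+1}}{\eta_k}\ \ge\ \tfrac{\kappa\delta}{2}\exp\!\big(-\tfrac{3d\delta^2}{2}h(\eta_k)^2\big)\ \ge\ \tfrac{\kappa\delta}{2}\,e^{-6\delta^2L_k}.
\]
With $u_k:=e^{6\delta^2L_k}$ this reads $u_{k+1}\ge u_k+c'\delta^3$ on good steps (a multiple $c'$ of $\kappa$) and $u_{k+1}\ge u_k$ in general, so on $\{\eta_n\ge\tau\}$, i.e.\ $u_n\le e^{6\delta^2\ell}$, there are at most $G:=(c'\delta^3)^{-1}e^{6\delta^2\ell}$ good steps among the first $n-1$. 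Here the regime is used: $\delta^2\ell=cK^2\log(n/d^{3/4})$ and $\delta^3=K^3(\log n/d)^{3/4}$, so $G=(c'K^3)^{-1}(\log n/d)^{-3/4}(n/d^{3/4})^{6cK^2}$; since $d\le n^{4/3}$ makes $n/d^{3/4}\ge1$, choosing $c$ small enough that $6cK^2\le1$ gives $G\le(c'K^3)^{-1}n/(\log n)^{3/4}\le n/2$ for all large $n$.

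On $\{\eta_n\ge\tau\}$ every $\eta_k$ lies in $[\exp(-c_0d\delta^2/2),1/2]$, so the number of \emph{bad} steps among the first $n-1$ is $(n-1)-\#\{\text{good steps}\}\ge(n-1)-G\ge n/3$. Let $\xi_k$ be the indicator that $\eta_k\in[\exp(-c_0d\delta^2/2),1/2]$ and step $k$ is not good, so $\{\eta_n\ge\tau\}\subseteq\{\sum_{k=1}^{n-1}\xi_k\ge n/3\}$; iterating the conditional estimate of \Cref{L.fast decay} over nested times gives $\bbE[\prod_{k\in S}\xi_k]\le\exp(-|S|\,d\delta^2/2)$ for each $S\subseteq\{1,\dots,n-1\}$, so a union bound over the $\binom{n-1}{\lceil n/3\rceil}$ candidate sets yields
\[
\bbP(\eta_n\ge\tau)\ \le\ \bbP\Big(\textstyle\sum_{k=1}^{n-1}\xi_k\ge n/3\Big)\ \le\ 2^{n}\exp\!\big(-\tfrac{n}{6}\,d\delta^2\big)\ =\ \exp\!\big(n\log 2-\tfrac{K^2}{6}\,n\sqrt{d\log n}\big).
\]
Because $d\gtrsim\log n$ makes $\sqrt{d\log n}$ large, the linear term is absorbed, and the bound is $\le\exp(-cn\sqrt{d\log n})$ once $c\le K^2/12$; taking $c$ below all the thresholds appearing above proves the proposition.

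The delicate part is the deterministic step‑count: one must retain the factor $\exp(-\tfrac{3d\delta^2}{2}h(\eta_k)^2)$ that \Cref{L.fast decay} loses compared to the ``ideal'' multiplicative decay $\eta_{k+1}\approx(1-\kappa\delta)\eta_k$, and then check that the single scaling $\delta\asymp(\log n/d)^{1/4}$ simultaneously makes the per‑step failure probability $e^{-d\delta^2/2}=e^{-\Theta(\sqrt{d\log n})}$ small enough to survive $n$ applications of the lemma and keeps $G=e^{\Theta(\delta^2\ell)}/\Theta(\delta^3)$ below $n$. That balance is exactly what pins down the exponent in $\tau$ (through $\delta^2\ell\asymp\log(n/d^{3/4})$) and confines the whole argument to $d\lesssim n^{4/3}$.
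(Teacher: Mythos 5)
Your proposal is correct and follows essentially the same route as the paper's proof: the same scaling $\delta\asymp(\log n/d)^{1/4}$, the same two-part structure (with probability $1-e^{-\Theta(n\sqrt{d\log n})}$ at least a constant fraction of steps realize the decrement of \Cref{L.fast decay}, while deterministically only $O(n/(\log n)^{3/4})$ such decrements can occur before $\eta$ drops below the threshold), and the same use of \Cref{sodin's Lemma} to relate $h(\eta_k)$ to $\log(1/\eta_k)$. The only differences are cosmetic bookkeeping: you count good steps via the potential $u_k=e^{6\delta^2 L_k}$ and handle dependence through nested conditional expectations, where the paper buckets the $\eta_i$ into scales $(e^{-k-1},e^{-k}]$ and uses the i.i.d.\ first-coordinate events directly.
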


\begin{proof}
    We first select a constant $0<c<<1$ and $\delta=c\sqrt[4]{\log n/d}$. Let $I\subset[n]$ contains all index $i$ such that the first coordinate of $\unif_i$ is smaller than $\delta$. Since $I$ is a Bernoulli process with success parameter $p_0 \ge 1-\exp(-d\delta^2/2)\ge 1-\exp(-c\sqrt{d\log n})$,
    $$\bbP(|I|\le n/2)\le\sum_{i=0}^{n/2}p_0^i(1-p_0)^{n-i}\binom{n}{i}\le 2^n(1-p_0)^{n/2}\le\exp(-cn\sqrt{d\log n}).$$
    Let $N=(\log n-\log d^{3/4})/\sqrt{\log n}$. On the event $|I|\ge n/2$ we prove $\eta_n<\exp(-c\sqrt dN)$. Since $\eta_1=p\le 1/2$,  we may assume $\eta_i\in[\exp(-c\sqrt{d\log n}),1/2]$ for any $1\le i\le n$. From the proof of Lemma \ref{L.fast decay}, we deduce that for $i\in I$, $\eta_i-\eta_{i+1}>c\delta\exp\left(-\frac{d}{2}(1+3\delta^2)h(\eta_i)^2\right).$ Denote $I_k=I\cap\{i:\eta_i\in(e^{-k-1},e^{-k}]\}$ for any integer $1\le k\le \sqrt{d\log n}$. Using Lemma \ref{sodin's Lemma}, we get that for any $i\in I_k$, 
    \begin{equation}e^{-k-1}\le C\exp\left(-\frac{d}{2}h(\eta_i)^2\right).\label{shrinking 1}\end{equation}
    Then $$\eta_{i}-\eta_{i+1}\ge c\delta e^{-(1+3\delta^2)k}$$ and therefore
    \begin{equation}|I_k|\le\left\lceil\frac{e^{-k}}{c\delta e^{-(1+3\delta^2)k}}\right\rceil\le 1+\frac{1}{c\delta}e^{3\delta^2k}.\label{shrinking 2}\end{equation}
    Summing the above inequality for $0\le k\le c\sqrt{d}N$ gives
    \begin{equation}\sum_{k=0}^{c\sqrt dN}|I_k|\le c\sqrt dN+1+\frac{e^{3c\delta^2\sqrt dN}}{c\delta(e^{3\delta^2}-1)}\le \frac{n}{2}.\label{shrinking 3}\end{equation}
    The last inequality holds for sufficiently small $c>0$. Therefore, $\eta_n<\exp(-c\sqrt d N)$ when $\setsize{I}\ge n/2$.
\end{proof}
\begin{proposition}\label{P.shrinking 2}
    For sufficiently small constant $c>0$, and $d \lesssim \log n$,
    $$\bbP\left(\eta_n\ge\exp\left(-cd\right) \right) \le \exp \left(-cnd\right).$$
\end{proposition}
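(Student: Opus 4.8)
The plan is to run essentially the same argument as in the proof of Proposition~\ref{P.shrinking 1}, the only structural change being that in the regime $d\lesssim\log n$ the choice $\delta=c\sqrt[4]{\log n/d}$ used there is $\gtrsim 1$ and hence violates the smallness hypothesis of Lemma~\ref{L.fast decay}; so I would instead fix $\delta$ to be a sufficiently small \emph{constant} and take the constant $c$ in the statement to be small in terms of $\delta$ and of the implied constant $C_0$ in $d\le C_0\log n$. Fix such a $\delta$, and let $I\subseteq[n]$ be the random set of indices $i$ with $\unif_i^{(1)}\le\delta$. By the deterministic computation inside the proof of Lemma~\ref{L.fast decay}, as long as every $\eta_j$, $j\le n$, lies in $[\exp(-c_0 d\delta^2/2),1/2]$ we have, for each $i\in I$, the decrement bound $\eta_i-\eta_{i+1}\ge c\delta\exp\!\left(-\frac{d}{2}(1+3\delta^2)h(\eta_i)^2\right)$; thus the only probabilistic input will be the size of $I$.

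The first step is to show $\bbP(\setsize I<n/2)\le\exp(-c\,dn)$. Here $\setsize I$ is binomial with mean at least $(n-1)/2$, since $\rho:=\utail{\delta}=\bbP(\unif_1^{(1)}>\delta)<1/2$. When $d$ is larger than a constant depending on $\delta$, Lemma~\ref{sodin's Lemma} gives $\rho\le\exp(-d\delta^2/2)$, and a union bound over the $\lceil n/2\rceil$-subsets of $[n]$ yields $\bbP(\setsize I<n/2)\lesssim 2^n\rho^{n/2}\le\exp(-c\,dn)$; when $d$ is bounded, $\tfrac12-\rho=\int_0^\delta f_d(x)\,dx$ is bounded below by a positive constant, so Hoeffding's inequality gives $\bbP(\setsize I<n/2)\le\exp(-c'n)\le\exp(-c\,dn)$ as $d$ is bounded. (The split is needed because the $2^n$ loss in the union bound is fatal when $d=O(1)$.)

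It then remains to check that on $\{\setsize I\ge n/2\}$ one has $\eta_n<\exp(-cd)$. Suppose not. Since $(\eta_k)$ is nonincreasing with $\eta_1=p\le 1/2$, every $\eta_i$ with $1\le i\le n$ lies in $[\exp(-cd),1/2]$, and with $c<c_0\delta^2/2$ this is contained in $[\exp(-c_0 d\delta^2/2),1/2]$, so the decrement bound above applies at each $i\in I$. Grouping as in Proposition~\ref{P.shrinking 1}, put $I_k:=I\cap\{i:\eta_i\in(e^{-k-1},e^{-k}]\}$; Lemma~\ref{sodin's Lemma} gives $e^{-k-1}<\eta_i\le C\exp(-\frac{d}{2}h(\eta_i)^2)$ for $i\in I_k$, so $\eta_i-\eta_{i+1}\gtrsim\delta\,e^{-(1+3\delta^2)k}$ there, and since $(\eta_j)$ passes through $(e^{-k-1},e^{-k}]$ only once the total of these decrements is at most $e^{-k}$, whence $\setsize{I_k}\le 1+\frac{C}{\delta}e^{3\delta^2 k}$. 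As $\eta_n\ge\exp(-cd)$ forces all nonempty levels to satisfy $k\le cd$, summing the geometric series gives
\[
\setsize I=\sum_{k=0}^{\lceil cd\rceil}\setsize{I_k}\le cd+2+C'\,e^{3c\delta^2 d},
\]
where $C'=C'(\delta)$. Invoking $d\le C_0\log n$ and taking $c$ so small that $3c\delta^2 C_0\le\frac12$, the last term is at most $C'\sqrt n$ and $cd=o(\sqrt n)$, so the right-hand side is $<n/2$ for all large $n$ — contradicting $\setsize I\ge n/2$. Hence $\{\eta_n\ge\exp(-cd)\}\subseteq\{\setsize I<n/2\}$ for all large $n$, and together with the first step $\bbP(\eta_n\ge\exp(-cd))\le\exp(-cdn)$ after relabelling $c$ as the smallest of the constants used; the finitely many small values of $n$ are absorbed into the constant. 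The argument contains no new idea beyond Proposition~\ref{P.shrinking 1}, so the main obstacle is purely bookkeeping, and it sits in two places: checking that the per-step decrement of Lemma~\ref{L.fast decay} can be invoked \emph{deterministically} along the indices of $I$ (so the randomness is entirely carried by $\setsize I$), and making the Bernoulli tail bound for $\setsize I$ come out at the correct order $\exp(-\Theta(dn))$ uniformly over all $d\ge 2$, which is precisely why one must treat $d$ large (union bound, $2^n$ harmless) and $d=O(1)$ (Hoeffding, $2^n$ fatal) separately.
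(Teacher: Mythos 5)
Your proposal is correct and follows essentially the same route as the paper's proof: fix a small constant $\delta$, bound $\bbP(\setsize{I}\le n/2)$ by $\exp(-cnd)$, and reuse the level-set counting \eqref{shrinking 1}--\eqref{shrinking 2} with the sum over $k\le cd$ controlled via $d\lesssim\log n$. Your only addition is the explicit case split (union bound for large $d$, Hoeffding for bounded $d$) in the Bernoulli tail estimate, which is a bookkeeping refinement of a step the paper asserts without detail.
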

\begin{proof}
    The proof is similar to that or Proposition~\ref{P.shrinking 1}. We select a small constant $\delta<<1$, and define $I$ and $I_k$ for $1\le k\le d$ as before. In this regime, $I$ is again a Bernoulli process, but with parameter $p_0\ge1-\exp(-cd)$. Therefore $\bbP(\setsize{I}\le n/2)\le\exp(-cnd)$. \eqref{shrinking 1} and \eqref{shrinking 2} still holds, and we can sum over $k$ to get that
    $$\sum_{k=0}^{cd}\setsize{I_k}\le 1+cd+\frac{e^{3c\delta^2d}}{c\delta(e^{3\delta^2}-1)}\le \frac{n}{2}$$ 
    holds for a small enough constant $c$. Thus $\eta_n\le e^{-cd}$ if $\setsize{I}\ge n/2$.
\end{proof}
Finally, we complete the proof of the upper bounds in Theorems \ref{T.SClique} and \ref{T.GClique}. Substituting Proposition \ref{P.shrinking 1} into \eqref{E.upper bound by cap process}, when $\log n \lesssim d \lesssim n^{4/3}$, we obtain
    \begin{align*}
        \bbP(\uclique)&\le\bbE (\eta_{n/2}^{n/2})\le \bbP\left(\eta_n\ge\exp\left(-c\sqrt{d}\cdot\frac{\log n-\log d^{3/4}}{\sqrt{\log n}}\right)\right)+\exp\left(-c\sqrt{d}\cdot\frac{\log n-\log d^{3/4}}{\sqrt{\log n}}\cdot\frac{n}{2}\right)\\
        &\le \exp\left(-cn\sqrt{d\log\frac{n^{1-o(1)}}{d^{3/4}}}\right).
    \end{align*}
    Similarly, in the regime $d \lesssim \log n$, we use Proposition \ref{P.shrinking 2} to get
    \begin{align*}
        \bbP(\uclique)\le\bbE \left(\eta_{n/2}^{n/2}\right)\le \bbP\left(\eta_n\ge\exp\left(-cd\right)\right)+\exp\left(-cd\cdot\frac{n}{2}\right)\le \exp\left(-cnd\right).
    \end{align*}
    The results above, along with Proposition~\ref{bayes upper bound} and Lemma~\ref{L.coupling}, complete the proof of the upper bounds in Theorems~\ref{T.SClique} and~\ref{T.GClique}.

\begin{remark}Since we lose some information during the symmetric rearrangement transformations, the upper bound we finally get is not tight when $n^{4/3} \lesssim d \lesssim n^2$. The Bayesian argument gives a matching upper bound when $n^2 \lesssim d$. Meanwhile, in the intermediate regime, it is not immediately clear what the correct order of decay should be, and is left as an open problem for the future.
\end{remark}

\section{Estimating Edge Deviation Probability}\label{S.Estimating Edge Deviation Probability}
In this section we present the proofs of Theorems \ref{T.SDeviation} and~\ref{T.GDeviation}, our main results on the large deviation for edges of the spherical and Gaussian RGG models.

\subsection{Lower bounds} \label{S.edge lower bound}

The first lower bound is a corollary of Proposition~\ref{P.ERlowerbound}, the Erd\H{o}s-R\'enyi type lower bound for the clique probability. It follows from noting that conditioned on the existence of a large enough clique, the event $\uedgedeviation{\deviationrate}$ happens with a probability bounded away from zero.
\begin{proposition}\label{P.ERlowerbound2}
    For all $n$ and $d\ge 2$,    $$\bbP(\uedgedeviation{\deviationrate})\ge \exp\left[-\left(\frac{p\log\frac{1}{p}}{2(1-p)}\deviationrate+o(1)\right)n^2\right].$$    
\end{proposition}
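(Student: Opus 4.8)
The plan is to \emph{plant} a clique of a suitably chosen size $m$ and then collect edges both from it and from the (essentially untouched) remainder of the graph. Fix $S=\{1,\dots,m\}$ with $m=m(n)$ to be chosen, and let $\cE_m$ be the event that $\unif_1,\dots,\unif_m$ induce a clique in $\srgg(n,p)$. Since the subgraph induced on $S$ has the law of $\srgg(m,p)$, Proposition~\ref{P.ERlowerbound} gives $\bbP(\cE_m)\ge p^{\binom{m}{2}}$. On $\cE_m$ we may write $\setsize{E(G)}=\binom{m}{2}+Z_1+Z_2$, where $Z_1$ counts the edges between $S$ and $[n]\setminus S$ and $Z_2$ counts the edges inside $[n]\setminus S$. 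The observation that makes everything work is that $\cE_m$ is measurable with respect to $\unif_1,\dots,\unif_m$ alone, so conditionally on $\cE_m$ the vectors $\unif_{m+1},\dots,\unif_n$ remain \iid \ uniform on $\sphere$ and independent of $\unif_1,\dots,\unif_m$.

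Using this I would compute the conditional first moment of $Z_1+Z_2$ and show it concentrates. For $Z_2$: it is independent of $\cE_m$ and has the law of the edge count of $\srgg(n-m,p)$, so $\bbE[Z_2\mid\cE_m]=p\binom{n-m}{2}$; disjoint vertex pairs give independent edge indicators and only the $O(n^3)$ pairs sharing a vertex contribute to the variance, whence $\Var(Z_2)=O(n^3)$ and Chebyshev gives $\bbP\!\big(Z_2< p\binom{n-m}{2}-n^{1.6}\,\big|\,\cE_m\big)=o(1)$. For $Z_1$: given $\unif_1,\dots,\unif_m$, the quantities $d_j:=\#\{i\le m:\unif_j\in\scap^p_{\unif_i}\}$ for $j>m$ are \iid, lie in $[0,m]$, and have mean $\sum_{i\le m}\mu(\scap^p_{\unif_i})=mp$ \emph{for every} configuration of $\unif_1,\dots,\unif_m$, so $\bbE[Z_1\mid\unif_1,\dots,\unif_m]=(n-m)mp$ identically, and Hoeffding's inequality (with $m=\Theta(n)$) gives $\bbP\!\big(Z_1<(n-m)mp-n^{1.6}\,\big|\,\cE_m\big)=o(1)$. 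A union bound then shows that, conditionally on $\cE_m$, with probability $1-o(1)$,
\[ \setsize{E(G)}\ \ge\ \binom{m}{2}+(n-m)mp+p\binom{n-m}{2}-2n^{1.6}\ =\ p\binom{n}{2}+(1-p)\binom{m}{2}-2n^{1.6}. \]

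Finally I would optimize over $m$. Since $(1+\deviationrate)p<1$ we have $\deviationrate p/(1-p)<1$, so $m:=\lceil n\sqrt{\deviationrate p/(1-p)}\,\rceil$ satisfies $m<n$ for large $n$, $\binom{m}{2}=\tfrac{\deviationrate p}{2(1-p)}n^2(1+o(1))$, and $(1-p)\binom{m}{2}\ge\deviationrate p\binom{n}{2}+2n^{1.6}$ for large $n$; the displayed bound then forces $\setsize{E(G)}\ge(1+\deviationrate)p\binom{n}{2}$, i.e.\ the event $\uedgedeviation{\deviationrate}$. Combining the pieces, for large $n$,
\[ \bbP(\uedgedeviation{\deviationrate})\ \ge\ \bbP(\cE_m)\,\bbP(\uedgedeviation{\deviationrate}\mid\cE_m)\ \ge\ \tfrac12\,p^{\binom{m}{2}}\ =\ \exp\!\left(-\Big(\tfrac{p\log(1/p)}{2(1-p)}\deviationrate+o(1)\Big)n^2\right), \]
and the finitely many small values of $n$ are absorbed into the $o(1)$ since $\bbP(\uedgedeviation{\deviationrate})\ge\bbP(\uclique)>0$ always. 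No step here is genuinely hard; the one that requires care is the middle paragraph: checking that conditioning on the planted clique perturbs neither the edge law among the untouched vertices nor the $S$-to-$[n]\setminus S$ degree distribution, so that $\bbE[Z_1+Z_2\mid\cE_m]$ equals exactly $p\big(\binom{n}{2}-\binom{m}{2}\big)$, together with the routine concentration of $Z_1$ and $Z_2$ about that mean; the choice of $m$ and the remaining algebra are bookkeeping.
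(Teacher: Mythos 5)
Your construction is essentially the paper's: the paper also plants a clique on the first $\lceil\alpha n+1\rceil$ vertices with $\alpha=\sqrt{p\deviationrate/(1-p)}$, invokes Proposition~\ref{P.ERlowerbound} for its cost $p^{\binom{m}{2}}=\exp(-(\tfrac{p\log(1/p)}{2(1-p)}\deviationrate+o(1))n^2)$, and then argues the remaining pairs still connect with probability $p$ so that the conditional expectation of $\setsize{E(G)}$ exceeds $(1+\deviationrate)p\binom{n}{2}$. The only methodological difference is downstream: where you run Chebyshev for $Z_2$ and conditional Hoeffding for $Z_1$ to get conditional probability $1-o(1)$, the paper uses the trivial bound $\setsize{E(G)}\le\binom{n}{2}$ and a reverse Markov inequality to get conditional probability at least $1/n^2$, which is all one needs since $\log n^2=o(n^2)$. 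Your conditioning step is justified exactly as you say ($\cE_m$ is measurable with respect to $\unif_1,\dots,\unif_m$, and $\sumeasure(\scap^p_{\unif_i})=p$ deterministically), so that part is fine.

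There is, however, one incorrect numerical claim in your last paragraph: with $m=\lceil n\sqrt{\deviationrate p/(1-p)}\rceil$ the inequality $(1-p)\binom{m}{2}\ge\deviationrate p\binom{n}{2}+2n^{1.6}$ is false. Expanding, $(1-p)\binom{m}{2}-\deviationrate p\binom{n}{2}=O(n)$ for this choice of $m$, so it cannot absorb the $2n^{1.6}$ concentration window that your Chebyshev/Hoeffding bounds force (those require a window $\gg n^{3/2}$, since $\Var(Z_2)=\Theta(n^3)$ and the Hoeffding denominator is $(n-m)m^2=\Theta(n^3)$). The fix is immediate: take $m=\lceil n\sqrt{\deviationrate p/(1-p)}+n^{0.7}\rceil$ (any $o(n)$ enlargement of order $\omega(n^{0.6})$ works), which restores the inequality while keeping $\binom{m}{2}=\tfrac{\deviationrate p}{2(1-p)}n^2(1+o(1))$, so the claimed exponent is unchanged. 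Alternatively, you could drop the concentration argument altogether and use the paper's reverse Markov step, which avoids needing any slack in $m$ beyond the $+1$ the paper uses.
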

\begin{proof}
    Let $\alpha=\sqrt{p\deviationrate/(1-p)}$, and let $\uclique_{\ceil{\alpha n+1}}$ be as defined in the proof of Proposition~\ref{Pr.proofERlowerbound}. We can infer from the proof that 
    \begin{equation}
    \label{not interesting}
    \bbP(\uclique_{\ceil{\alpha n+1}})\ge p^{\binom{\ceil{\alpha n+1}}{2}}\ge\exp\left(-\frac{p\log\frac{1}{p}}{2(1-p)}\deviationrate n^2-n\right).
    \end{equation}
    Note that conditioned on $\uclique_{\ceil{\alpha n+1}}$ happening, the connection probability of edges not included in this small clique remains equal to $p$. That implies if $G \sim \srgg(n,p)$,
    $$\bbE\left[\setsize{E(G)}\Big|\uclique_{\ceil{\alpha n+1}}\right]\ge \binom{\ceil{\alpha n+1}}{2}+p\left[\binom{n}{2}-\binom{\ceil{\alpha n+1}}{2}\right]\ge p(1+\deviationrate)\binom{n}{2}+1.$$
    Hence $$\bbP\left(\setsize{E(G)}\ge p(1+\deviationrate)\binom{n}{2}\Big|\uclique_{\ceil{\alpha n+1}}\right)\ge\frac{1}{\binom{n}{2}}\left[\bbE\left[\setsize{E(G)}\Big|\uclique_{\ceil{\alpha n+1}}\right]-p(1+\deviationrate)\binom{n}{2}\right]\ge\frac{1}{n^2}.$$
    Combining with~\eqref{not interesting} completes the proof.
\end{proof}
For the second lower bound, we apply the biasing argument like we did in the proof of Proposition~\ref{P.biaslowerbound}. Again, we prove the result for the Gaussian model and use Lemma~\ref{deviation coupling} to conclude the result for the spherical model.
\begin{proposition}\label{P.biaslowerbound2}
    For all $n$ and $d\ge 2$, $$\bbP(\gedgedeviation{\deviationrate})\ge\exp\left[-\left(\frac{\gtailinverse{p}-\gtailinverse{p(1+\deviationrate)}}{2}+o(1)\right)n\sqrt d\right].$$
\end{proposition}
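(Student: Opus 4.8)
The plan is to run the biasing argument of Proposition~\ref{P.biaslowerbound} in the Gaussian model, but with a much gentler bias: to produce an \emph{atypical} (rather than extremal) number of edges it is enough to tilt the first coordinate of every Gaussian vector by an amount of order $d^{1/4}$, and to replace the ``every pair connected'' requirement by the weaker demand that the \emph{conditional} expected number of edges exceed the target $(1+\deviationrate)p\binom n2$. Write $\gauss_i=(\gauss_i^{(1)},\dots,\gauss_i^{(d)})$ with i.i.d.\ standard Gaussian coordinates. Fix a small constant $\gamma>0$ with $(1+\deviationrate)p+\gamma<1$, and let $s=s_d\in\bbR$ be the number determined by $\bbP\big(\sum_{k=2}^d\gauss_1^{(k)}\gauss_2^{(k)}\ge s\big)=(1+\deviationrate)p+\gamma$. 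Since $\sum_{k=2}^d\gauss_1^{(k)}\gauss_2^{(k)}$ is the inner product of two independent standard Gaussians in $\bbR^{d-1}$, Lemma~\ref{L.estimate threshold} applied with edge probability $(1+\deviationrate)p+\gamma$ and dimension $d-1$ gives $s=(\gtailinverse{(1+\deviationrate)p+\gamma}+o(1))\sqrt d$; combined with $\gthreshold{p}=(\gtailinverse{p}+o(1))\sqrt d$ and $\gtailinverse{p}>\gtailinverse{(1+\deviationrate)p+\gamma}$ (as $\gtailinverse{\cdot}$ is strictly decreasing), the quantity $b^2:=\gthreshold{p}-s=(\gtailinverse{p}-\gtailinverse{(1+\deviationrate)p+\gamma}+o(1))\sqrt d$ is positive for $d$ large and tends to infinity. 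Consider the biased event $\mathcal B:=\{\gauss_i^{(1)}\ge b\ \text{for all }1\le i\le n\}$, so $\bbP(\mathcal B)=\gtail{b}^n$.

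Next I would lower bound the conditional edge probability. Conditionally on $\mathcal B$, the reduced vectors $(\gauss_i^{(2)},\dots,\gauss_i^{(d)})_{i\in[n]}$ are still i.i.d.\ $\cN(0,I_{d-1})$ and independent of the first coordinates, while on $\mathcal B$ one has $\gauss_i^{(1)}\gauss_j^{(1)}\ge b^2$. Since $u\mapsto\bbP\big(\sum_{k=2}^d\gauss_i^{(k)}\gauss_j^{(k)}\ge\gthreshold{p}-u\big)$ is non-decreasing, conditioning on the first coordinates and using $\gauss_i^{(1)}\gauss_j^{(1)}\ge b^2$ gives, for every pair $i\neq j$,
\[
\bbP\big((i,j)\in E(G)\mid\mathcal B\big)\ \ge\ \bbP\Big(\textstyle\sum_{k=2}^d\gauss_i^{(k)}\gauss_j^{(k)}\ge\gthreshold{p}-b^2\Big)\ =\ (1+\deviationrate)p+\gamma ,
\]
so by linearity $\bbE[\,\setsize{E(G)}\mid\mathcal B\,]\ge\binom n2\big((1+\deviationrate)p+\gamma\big)$. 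As $\setsize{E(G)}\in[0,\binom n2]$, the elementary reverse Markov bound $\bbP(X\ge a)\ge(\bbE X-a)/(M-a)$ for $X\in[0,M]$ a.s.\ and $a\le\bbE X$, applied under $\bbP(\cdot\mid\mathcal B)$ with $a=(1+\deviationrate)p\binom n2$ and $M=\binom n2$, gives $\bbP(\gedgedeviation{\deviationrate}\mid\mathcal B)\ge\gamma/(1-(1+\deviationrate)p)$.

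Combining the two estimates, $\bbP(\gedgedeviation{\deviationrate})\ge\bbP(\mathcal B)\,\bbP(\gedgedeviation{\deviationrate}\mid\mathcal B)\ge\frac{\gamma}{1-(1+\deviationrate)p}\,\gtail{b}^n$. Since $b\to\infty$ we have $\log\gtail{b}=-\tfrac{b^2}{2}(1+o(1))$, and so, taking logarithms,
\[
\bbP(\gedgedeviation{\deviationrate})\ \ge\ \exp\!\Big[-\Big(\tfrac{\gtailinverse{p}-\gtailinverse{(1+\deviationrate)p+\gamma}}{2}+o(1)\Big)n\sqrt d\Big].
\]
This holds for every sufficiently small $\gamma>0$, and $\gamma\mapsto\gtailinverse{(1+\deviationrate)p+\gamma}$ is continuous with limit $\gtailinverse{(1+\deviationrate)p}$ as $\gamma\downarrow0$, so letting $\gamma\to0$ along a sequence (absorbing the discrepancy into the $o(1)$) yields the claimed bound. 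For the finitely many small $d$ outside the asymptotic regime one argues directly, e.g.\ by conditioning all $n$ points into one fixed small cap, which gives a cruder $\exp(-O_d(n))$ lower bound that is again absorbed by the $o(1)$.

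The main obstacle is the calibration of the bias: any \emph{fixed} positive bias would overshoot the optimal constant in the exponent, so $b$ must be pinned precisely to the appropriate threshold and the auxiliary margin $\gamma$ must be sent to zero. Routing that margin through the tail probability $\bbP\big(\sum_{k\ge2}\gauss_i^{(k)}\gauss_j^{(k)}\ge\cdot\big)$ turns the conditional edge bound into an exact identity, so the only asymptotic inputs are Lemma~\ref{L.estimate threshold} (now used in dimension $d-1$) and the standard Gaussian tail asymptotics; no Berry--Esseen-type control is needed. A secondary point is that, conditionally on $\mathcal B$, the edges are not independent, which is why the argument passes through the first moment plus a reverse-Markov step rather than analysing $\bbP(\gedgedeviation{\deviationrate}\mid\mathcal B)$ more finely.
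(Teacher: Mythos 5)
Your proof is correct and follows essentially the same route as the paper's: bias the first coordinate of every vector to a level calibrated so that the conditional edge probability exceeds $(1+\deviationrate)p$, lower-bound the conditional expected number of edges, apply a reverse Markov inequality, and conclude via Lemma~\ref{L.estimate threshold} together with standard Gaussian tail asymptotics. The only cosmetic difference is the calibration of the margin: the paper takes it to be $1/n$ (so it vanishes automatically and the reverse Markov step yields $1/n^2$), whereas you fix a constant margin $\gamma$ and remove it afterwards by letting $\gamma\downarrow 0$ along a diagonal sequence.
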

\begin{proof}
    Suppose $\gauss_i=(\gauss_i^{(1)},\ldots,\gauss_i^{(d)})$. Let $C:=C_{n,d,p,\deviationrate}=\sqrt{\gthreshold{p,d}-\gthreshold{p(1+\deviationrate)+1/n,d-1}}$ and define the biased event $\mathcal B_C:=\{\gauss_i^{(1)}>C,~\forall~1\le i\le n\}$.
    Then for all $1\le i<j\le n$ we have
    $$\bbP\left(\inner{\gauss_i}{\gauss_j}\ge \gthreshold{p,d}\Big|\mathcal B_C\right)\ge \bbP\left(\sum_{k=2}^{d}\gauss_i^{(k)}\gauss_j^{(k)}\ge\gthreshold{p,d}-C^2\right)=\left(p(1+\deviationrate)+1/n\right) \wedge 1.$$
    Let $G \sim \grgg(n,p)$. By summing over all $i,j$ we obtain
    $$\bbE\left[\setsize{E(G)}\Big|\mathcal B_C\right]\ge p(1+\deviationrate)\binom{n}{2}+1.$$
    Similar to the proof of Proposition~\ref{P.ERlowerbound2}, we get that 
    \begin{equation}
    \label{E.biaslowerbound2-2}
    \bbP\left(\setsize{E(G)}\ge p(1+\deviationrate)\binom{n}{2}\Big|\mathcal B_C\right)\ge \frac{1}{n^2}.
    \end{equation}
    Since $1/n \rightarrow 0$ as $n \rightarrow \infty$, we can deduce from Lemma~\ref{L.estimate threshold} that
    \begin{equation}
    \label{E.biaslowerbound2-3}
    C^2=\gthreshold{p,d}-\gthreshold{p(1+\deviationrate)+1/n,d-1}=\left[\gtailinverse{p}-\gtailinverse{p(1+\deviationrate)}+o(1)\right]\sqrt d.
    \end{equation}
    Using the standard estimation for Gaussian tails, we obtain
    \begin{equation}
    \label{E.biaslowerbound2-4}
    \bbP(\mathcal B_C)=\gtail{C}^n\ge\exp\left[-(1+o(1))C^2n/2\right].
    \end{equation}
    Combining~\eqref{E.biaslowerbound2-2},~\eqref{E.biaslowerbound2-3} and~\eqref{E.biaslowerbound2-4} concludes the proof.
\end{proof}

Combining Proposition~\ref{P.ERlowerbound2} and Proposition~\ref{P.biaslowerbound2} together with Lemma~\ref{deviation coupling} gives the lower bounds in Theorems~\ref{T.SDeviation} and~\ref{T.GDeviation}.

\subsection{Upper bounds}\label{S.Upper bound}

We consider the number of edges $\setsize{E(G)}$ as the final term of the following stochastic process. Denote by $\setsize{E_k(G)}$ the number of edges between $\unif_1,\ldots,\unif_k$, and define $f_k(x)=\sum_{1\le i\le k}\mathrm 1_{\scap^p_{\unif_i}}(x)$. Then $f_{k-1}(\unif_{k})$ counts the number of edges from $\unif_{k}$ to $\unif_1,\ldots,\unif_{k-1}$, and $$\setsize{E_{k}(G)}-\setsize{E_{k-1}(G)}=f_{k-1}(\unif_{k}).$$
Similar to the idea in Section~\ref{S.clique upper bound}, we will use Proposition~\ref{P.sym-dom.MGF} here to obtain a bound on the MGF of $\setsize{E(G)}=\sum_{k=1}^nf_{k-1}(\unif_k)$ by a sequence of symmetric rearrangements. We start by defining another sequence of functions $(g_k)_{k \ge 0}$ by setting $g_0 \equiv 0$ on $\sphere$, and iteratively defining 
\begin{equation}\label{E.definition of g}
g_{k} \coloneqq \symmfn{\left(g_{k-1}+\mathrm 1_{\scap^p_{\unif_k}}\right)},\qquad 1\le k\le n-1.
\end{equation}
Using these functions, we define an interpolating process
\begin{equation}m_k \coloneqq \sum_{i=1}^{k}g_{i-1}(\unif_i)+\sum_{i=k+1}^{n}g_k(\unif_i)+\sum_{k+1\le i<j\le n}\indicatorevent{\inner{\unif_i}{\unif_j}\ge\uthreshold{p}}, \ 1 \le k \le n. \label{E.define m_k}
\end{equation}
By definition, we have $m_0 = |E(G)|$ and $m_n = \sum_{k=1}^ng_{k-1}(\unif_k).$ For each $k$, the first term in $m_k$ corresponds to the symmetric rearrangement of the edges between $\unif_1, \ldots, \unif_k$, the second term corresponds to the symmetric rearrangement of the edges between $\unif_1,\ldots,\unif_k$ and $\unif_{k+1}, \ldots, \unif_{n}$, and the final term counts the true edges between $\unif_{k+1},\ldots,\unif_{n}$ in $G$.  

\begin{proposition}\label{P.upper bound for MGF}
    The moment generating functions of $m_k$ are non decreasing in $k$, i.e., for any $\theta\ge 0$, we have $$\bbE \exp(\theta\setsize{E(G)})=\bbE e^{\theta m_0}\le\bbE e^{\theta m_1}\le\ldots\le\bbE e^{\theta m_n}.$$
\end{proposition}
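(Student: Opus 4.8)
The plan is to prove the chain of inequalities one link at a time: for each $0\le k\le n-1$ and every $\theta\ge 0$ it suffices to show $\bbE e^{\theta m_k}\le\bbE e^{\theta m_{k+1}}$, since then chaining these bounds together with the identity $m_0=\setsize{E(G)}$ gives the asserted monotonicity. The point is that $m_k$ and $m_{k+1}$ differ by exactly one symmetric rearrangement of a non-negative function, which is precisely the operation controlled by Proposition~\ref{P.sym-dom.MGF}.

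First I would peel off the vertex $\unif_{k+1}$ from both $m_k$ and $m_{k+1}$. Set $f:=g_k+\mathrm 1_{\scap^p_{\unif_{k+1}}}$, a non-negative function on $\sphere$, and $D:=\sum_{i=1}^{k}g_{i-1}(\unif_i)+g_k(\unif_{k+1})$; recall also that the altitude of a cap of measure $p$ equals $\uthreshold{p}$, so that $\mathrm 1_{\scap^p_{\unif_{k+1}}}(\unif_i)=\indicatorevent{\inner{\unif_i}{\unif_{k+1}}\ge\uthreshold{p}}$. Regrouping the three sums in~\eqref{E.define m_k} then yields
\begin{align*}
  m_k&=D+\sum_{i=k+2}^{n}f(\unif_i)+\sum_{k+2\le i<j\le n}\indicatorevent{\inner{\unif_i}{\unif_j}\ge\uthreshold{p}},\\
  m_{k+1}&=D+\sum_{i=k+2}^{n}\symmfn{f}(\unif_i)+\sum_{k+2\le i<j\le n}\indicatorevent{\inner{\unif_i}{\unif_j}\ge\uthreshold{p}},
\end{align*}
where the second identity uses $g_{k+1}=\symmfn{f}$ from~\eqref{E.definition of g}. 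Thus $m_k$ and $m_{k+1}$ have the same ``$D$'' part and the same ``future--future edge'' sum, and differ only in that $f$ is replaced by its symmetric rearrangement $\symmfn{f}$ in the term recording edges from $\{\unif_1,\dots,\unif_{k+1}\}$ to $\{\unif_{k+2},\dots,\unif_n\}$.

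Next I would condition on $\cF_{k+1}:=\sigma(\unif_1,\dots,\unif_{k+1})$. Given $\cF_{k+1}$, the number $D$ is constant and $f$ (hence $\symmfn{f}$) is a fixed non-negative measurable function on $\sphere$, while $\unif_{k+2},\dots,\unif_n$ are still i.i.d.\ uniform on $\sphere$ and independent of $\cF_{k+1}$. Hence $\bbE[e^{\theta m_k}\mid\cF_{k+1}]=e^{\theta D}\,\bbE[\exp(\theta T_f)\mid\cF_{k+1}]$, where $T_f:=\sum_{i=k+2}^{n}f(\unif_i)+\sum_{k+2\le i<j\le n}\indicatorevent{\inner{\unif_i}{\unif_j}\ge\uthreshold{p}}$, and the same identity holds for $m_{k+1}$ with $f$ replaced by $\symmfn{f}$. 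Applying Proposition~\ref{P.sym-dom.MGF} to the $n-k-1$ i.i.d.\ uniform points $\unif_{k+2},\dots,\unif_n$ with the function $f$ (i.e.\ the proposition with $n-k-1$ in place of $n$) gives $\bbE[\exp(\theta T_f)\mid\cF_{k+1}]\le\bbE[\exp(\theta T_{\symmfn{f}})\mid\cF_{k+1}]$ for all $\theta\ge0$, hence $\bbE[e^{\theta m_k}\mid\cF_{k+1}]\le\bbE[e^{\theta m_{k+1}}\mid\cF_{k+1}]$; taking expectations completes the inductive step. The boundary cases require no special treatment: for $k=0$ the sum defining $D$ is empty and $g_0\equiv0$, and for $k=n-1$ one simply notes that all sums over indices $\ge k+2$ are empty, so $m_{n-1}=m_n$ identically.

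I do not expect a genuine obstacle, as all the analytic content is contained in Proposition~\ref{P.sym-dom.MGF}. The only step that needs care is the bookkeeping in the displayed decomposition: checking that every term of $m_k$ not involving the ``future'' vertices $\unif_{k+2},\dots,\unif_n$ reappears unchanged in $m_{k+1}$, and that $\symmfn{(g_k+\mathrm 1_{\scap^p_{\unif_{k+1}}})}$ is exactly $g_{k+1}$, so that passing from $m_k$ to $m_{k+1}$ amounts to a single symmetric rearrangement of one non-negative function and nothing more.
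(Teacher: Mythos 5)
Your proof is correct and follows essentially the same route as the paper: condition on the first batch of points, absorb the edges from $\unif_{k+1}$ to the later points into the function $f=g_k+\mathrm 1_{\scap^p_{\unif_{k+1}}}$ (so that $\symmfn{f}=g_{k+1}$), and invoke Proposition~\ref{P.sym-dom.MGF} on the remaining i.i.d.\ uniform points. The only difference from the paper's argument is the cosmetic index shift (the paper compares $m_{k-1}$ with $m_k$ after conditioning on $\unif_1,\dots,\unif_k$), so there is nothing further to add.
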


\begin{proof}
    It suffices to show that $\bbE e^{\theta m_{k-1}}\le\bbE e^{\theta m_k}$ for any $1\le k\le n$. Fix $k$, then by first conditioning on $\unif_1,\ldots,\unif_k$, we get
    \begin{align*}
        &\bbE[e^{\theta m_{k-1}}|\unif_1,\ldots,\unif_k]\\
        &=\exp\left(\theta\sum_{i=1}^{k}g_{i-1}(\unif_i)\right)\bbE_{\unif_{k+1},\ldots,\unif_{n}}\exp\left(\theta\left(\sum_{i=k+1}^{n}g_{k-1}(\unif_i)+\sum_{k\le i<j\le n}\indicatorevent{\inner{\unif_i}{\unif_j}\ge \uthreshold{p}}\right)\right)\\
    \end{align*}
    Here $\bbE_{\unif_{k+1},\ldots,\unif_{n}}$ means taking expectation over randomness of $\unif_{k+1},\ldots,\unif_{n}$, we drop the subscript and simply write $\rm E$ to simplify the notation. Let $f(\unif)=g_{k-1}(\unif)+\indicatorfn{\scap^p_{\unif_k}}(\unif)$, and note that $\symmfn{f}=g_k$. By applying Proposition \ref{P.sym-dom.MGF}, we obtain
    \begin{align*}
    &\conditionalE\exp\left(\theta\left(\sum_{i=k+1}^{n}g_{k-1}(\unif_i)+\sum_{k\le i<j\le n}\mathrm 1\{\inner{\unif_i}{\unif_j}\ge\uthreshold{p}\}\right)\right)\\
    =&\conditionalE\exp\left(\theta\left(\sum_{i=k+1}^{n}f(\unif_i)+\sum_{k+1\le i<j\le n}\mathrm 1\{\inner{\unif_i}{\unif_j}\ge\uthreshold{p}\}\right)\right)\\
    \le&\conditionalE\exp\left(\theta\left(\sum_{i=k+1}^{n}\symmfn{f}(\unif_i)+\sum_{k+1\le i<j\le n}\mathrm 1\{\inner{\unif_i}{\unif_j}\ge\uthreshold{p}\}\right)\right)\\
    =&\conditionalE\exp\left(\theta\left(\sum_{i=k+1}^{n}g_{k}(\unif_i)+\sum_{k+1\le i<j\le n}\mathrm 1\{\inner{\unif_i}{\unif_j}\ge\uthreshold{p}\}\right)\right)
    \end{align*}
    Thus $$\bbE[e^{\theta m_{k-1}}|\unif_1,\ldots,\unif_k]\le\bbE[e^{\theta m_{k}}|\unif_1,\ldots,\unif_k].$$
    Taking expectation over $\unif_1,\ldots,\unif_k$ completes the proof.
\end{proof}

We complete the proof of Theorems~\ref{T.SDeviation} and~\ref{T.GDeviation} using the following upper bound on $m_n$.
\begin{proposition}\label{P.upper bound of m_n}
    Let $m_n$ be defined as in \eqref{E.define m_k}. Then 
        \begin{equation}\label{E.upper bound of m_n}
        m_n \leq \left(p + p\deviationrate/2\right)  \binom{n}{2}
        \end{equation}
    holds with probability at least
        \[ 1-\exp \left( -\min \left( c_1(p,\deviationrate)n^2, c_2(p,\deviationrate)n\sqrt{d} \right)\right) .
        \]
    Here $c_1(p,\deviationrate)$ and $c_2(p,\deviationrate)$ are positive constants that depend only on $p$ and $\deviationrate$.
\end{proposition}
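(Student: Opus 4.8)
The plan is to understand the quantity $m_n = \sum_{k=1}^n g_{k-1}(\unif_k)$ by first obtaining a deterministic, pointwise bound on the functions $g_k$ and then using this bound together with concentration. The key structural observation is that $g_k$ is a symmetric (decreasing, centered at $\eb$) function on $\sphere$ whose integral is exactly $\int g_k \, d\mu = \int \left(g_{k-1} + \indicatorfn{\scap^p_{\unif_k}}\right) d\mu = \int g_{k-1} \, d\mu + p$, since symmetric rearrangement preserves the integral. Hence $\int g_k \, d\mu = pk$ for all $k$. Moreover, because $g_k$ is obtained from $g_{k-1}$ by adding an indicator (bounded by $1$) and then rearranging, and because rearrangement is order-preserving, one gets that $g_k \le g_{k-1}^\bullet + 1 = g_{k-1} + 1$ wherever things are already symmetric, so $g_k \le k$ everywhere; but the important bound is the super-level-set bound: since $g_k$ is a symmetric decreasing function with mass $pk$, for any threshold $\lambda$ the set $\{g_k \ge \lambda\}$ is a cap of measure at most $pk/\lambda$, so $g_k$ can only exceed $\lambda$ on a small cap. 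The first step, then, is to make precise a deterministic bound of the form: $g_k(x) \le pk + (\text{something controlled by } h(\text{measure of the cap where } x \text{ sits}))$, i.e. $g_k$ is close to its average except on a cap of exponentially small measure.

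Next I would estimate $\sum_{k=1}^n g_{k-1}(\unif_k)$ in two pieces according to whether $\unif_k$ lands in the "bulk" (where $g_{k-1}$ is close to its mean $p(k-1)$) or in the exceptional small cap near $\eb$ (where $g_{k-1}$ can be as large as $k-1$). For the bulk contribution, on the event that $\unif_k$ lies outside a cap of measure $\rho$, we have $g_{k-1}(\unif_k) \le p(k-1)/\rho \lesssim p(k-1)\cdot(1+\text{small})$ if $\rho$ is chosen a constant less than but close to $1$ — actually more carefully, since $g_{k-1}$ integrates to $p(k-1)$ and is decreasing, $g_{k-1}(\unif_k) \le p(k-1)/\mu(\scap^{\langle \unif_k,\eb\rangle})$; integrating the contribution of the bulk over all $k$ gives at most roughly $p\binom{n}{2}(1+o(1))$ in expectation, but we need the constant to be strictly below $p(1+\deviationrate/2)$. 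This forces us to be quantitative: we should show $\bbE[g_{k-1}(\unif_k) \mid \unif_1,\dots,\unif_{k-1}] = \int g_{k-1} \, d\mu = p(k-1)$ exactly (since $\unif_k$ is uniform and independent!), so $\bbE[m_n] = \sum_{k=1}^n p(k-1) = p\binom{n}{2}$. This is the crucial simplification: $m_n$ has mean exactly $p\binom{n}{2}$, and the entire task reduces to showing $m_n$ concentrates, namely $\bbP(m_n \ge (1+\deviationrate/2)p\binom{n}{2}) \le \exp(-\min(c_1 n^2, c_2 n\sqrt d))$.

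For the concentration, the natural tool is a martingale/Azuma argument on the filtration $\cF_k = \sigma(\unif_1,\dots,\unif_k)$ applied to $m_n = \sum_k g_{k-1}(\unif_k)$, but the martingale differences $g_{k-1}(\unif_k) - p(k-1)$ are not bounded — they can be as large as $k$ — so a plain Azuma bound is too weak. Instead I would use the super-level-set structure: the bad event is that $\unif_k$ lands in the cap $\scap^\rho_\eb$ where $g_{k-1}$ is large, and this has probability $\rho$, so the process is a sum of nearly-bounded increments plus a "Poissonian" correction from rare large jumps. Concretely, split $g_{k-1} = g_{k-1}\wedge L + (g_{k-1}-L)^+$ for a truncation level $L \asymp p n$; the truncated part $\sum_k (g_{k-1}(\unif_k)\wedge L)$ has bounded increments and by Bernstein/Azuma deviates by $t$ with probability $\exp(-ct^2/(nL^2))$ — choosing $t \asymp p n^2$ this is $\exp(-cn^2/L^2 \cdot n) $; hmm, this needs $L \lesssim \sqrt n \cdot$ something. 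Let me reorganize: the right approach is that $(g_{k-1}-L)^+$ is supported on a cap of measure $p(k-1)/L$, so $\unif_k$ hits it with probability at most $p(k-1)/L \le pn/L$; hence $\sum_k \indicatorevent{g_{k-1}(\unif_k) > L}$ is stochastically dominated by $\mathrm{Binomial}(n, pn/L)$, and combined with the crude bound $g_{k-1} \le n$ the excess contribution from these rare hits is at most $n \cdot \mathrm{Binomial}(n,pn/L)$. Balancing: take $L = \epsilon^{-1} pn$ (a constant times $pn$) so that $\bbE[\text{rare contribution}] \le n \cdot n \cdot \epsilon = \epsilon n^2$; Bernstein on the binomial gives the rare part exceeds $\epsilon p \binom n2/4$ with probability at most $\exp(-c\epsilon n^2/L \cdot \dots)$ — again the $d$-dependence is missing. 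The genuine source of the $n\sqrt d$ term is that when $d$ is small, the altitude $h(\rho)$ of a small cap is only of order $\sqrt{\log(1/\rho)/d}$ (via Lemma~\ref{sodin's Lemma}), so the cap near $\eb$ of exponentially small measure $e^{-cd}$ still has non-negligible first coordinate, meaning $g_{k-1}$ is not as sharply concentrated; the refined estimate from Sodin's lemma shows that the measure of $\{g_{k-1} \ge \lambda p(k-1)\}$ is at most $\exp(-c d \log^2\lambda)$ or similar, and hitting such a set $n$ times has probability $\exp(-c n d \log^2 \lambda)$, whence the $n\sqrt d$ scale emerges after optimizing $\lambda$.

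\textbf{Main obstacle.} The hard part will be getting the concentration constant sharp enough that the threshold is $(1+\deviationrate/2)p\binom n2$ rather than just $O(p n^2)$ — i.e., controlling the contribution of the rare but large values of $g_{k-1}$ near the pole $\eb$ simultaneously in the two regimes $d \gtrsim \sqrt n$ (where the $n^2$ rate should come from a Bernstein bound on a sum of sub-exponential-ish increments) and $d \lesssim \sqrt n$ (where the $n\sqrt d$ rate comes from the probability that many $\unif_k$ land in a cap whose altitude is governed by Lemma~\ref{sodin's Lemma}). The cleanest route is probably to bound $\bbE \exp(\theta m_n)$ directly by iterating: $\bbE[\exp(\theta \sum_{i=1}^k g_{i-1}(\unif_i)) \mid \cF_{k-1}] = \exp(\theta\sum_{i=1}^{k-1}g_{i-1}(\unif_i)) \cdot \bbE_{\unif_k}\exp(\theta g_{k-1}(\unif_k))$, and $\bbE_{\unif_k}\exp(\theta g_{k-1}(\unif_k)) = \int_\sphere \exp(\theta g_{k-1}) \, d\mu$ can be estimated using the level-set bound on the symmetric decreasing function $g_{k-1}$ (mass $p(k-1)$) together with Sodin's tail estimate — this integral is at most $\exp(\theta p(k-1)(1 + \text{error}(\theta, d)))$ — and then multiply over $k$ and apply Markov's inequality with $\theta$ optimized, giving exactly the claimed $\min(c_1 n^2, c_2 n \sqrt d)$ after the optimization splits into the two regimes. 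I would pursue this MGF route as the main line, deferring the routine Sodin-lemma computation of $\int \exp(\theta g_{k-1})\, d\mu$ to a short lemma.
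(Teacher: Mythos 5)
Your reduction is attractive: since rearrangement preserves the integral, $\int g_{k-1}\,d\mu = p(k-1)$, and since $\unif_k$ is independent of $g_{k-1}$ you get $\bbE[m_n]=p\binom{n}{2}$ exactly, so the proposition becomes a concentration statement. That part is correct (and is a cleaner framing than the paper, which never computes the mean and instead bounds $m_n$ pointwise on a good event). The problem is the step you lean on for the concentration itself. Your "main line" requires $\int_{\sphere} e^{\theta g_{k-1}}\,d\mu \le \exp\bigl(\theta p(k-1)(1+\mathrm{err})\bigr)$, or equivalently exponentially small measure for the super-level sets $\{g_{k-1}\ge \lambda p(k-1)\}$, derived "using the level-set bound on the symmetric decreasing function $g_{k-1}$ (mass $p(k-1)$) together with Sodin's tail estimate". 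No such deterministic estimate exists: the only deterministic consequences of the construction are that $g_{k-1}$ is symmetric decreasing with integral $p(k-1)$ and maximum at most $k-1$, and these are compatible with $g_{k-1}=(k-1)\indicatorfn{\scap^{p}_{\eb}}$ (which occurs in the limit where all earlier centers sit at the pole), for which $\int e^{\theta g_{k-1}}d\mu = 1-p+pe^{\theta(k-1)}$ is vastly larger than $e^{\theta p(k-1)(1+o(1))}$ and the super-level sets have measure $p$, not $e^{-cd\log^2\lambda}$. Markov on the mass only gives measure $p(k-1)/\lambda$, which (as your own truncation/Azuma computation shows) is far too weak to reach the $\exp(-\min(c_1 n^2,c_2 n\sqrt d))$ rate. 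Sodin's lemma is a statement about the uniform measure of caps; it says nothing about the profile of $g_{k-1}$ unless you first prove that the mass of $g_{k-1}$ does not pile up near the pole, and that is a statement about the randomness of the history $\unif_1,\dots,\unif_{k-1}$, not about rearranged functions per se.

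Closing this gap is essentially the entire content of the paper's proof, and it is done differently from what you sketch: one fixes a width $a$ and shows (Lemma~\ref{L.proportion lemma}) that as long as the new center $\unif_k$ lies in the strip $\strip{a}$, adding $\indicatorfn{\scap^p_{\unif_k}}$ multiplies the \emph{restricted} exponential functional $\MGF{g}{\complementcap{a}}{\lambda}$ by at most $1-q+qe^{\lambda}+2e^{-a^2\sqrt d/2 + O(1)+\lambda}$, because on each slice $S_t$ with $|t|\le a/\sqrt[4]{d}$ the cap $\scap^p_{\unif_k}$ occupies proportion at most $q$; centers outside the strip cost a factor $e^{\lambda}$, and the event that more than $p\deviationrate n/16$ centers are atypical is bounded by $\exp(-p\deviationrate n\sqrt d\,h^2/64)$ with $h=\min(c_0\sqrt n/\sqrt[4]{d},a/2)$ — this bad event, not a level-set estimate for $g_{k-1}$, is the source of the $\min(n^2,n\sqrt d)$ rate. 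From the restricted MGF one then extracts a pointwise bound on $g_{k-1}(\unif_k)$ for $k$ with $\unif_k\in\strip{h}$ and sums. So your proposal identifies the right quantity to control but asserts, rather than proves, the one estimate that requires the conditional/recursive argument through the randomness of the cap centers; as written, the argument does not go through.
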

We first prove \Cref{T.SDeviation} assuming \Cref{P.upper bound of m_n}, using the following general lemma on stochastic order. The proof of Proposition~\ref{P.upper bound of m_n} is delayed to the end of the section.
\begin{lemma}\label{L.exp dominance}
    Suppose $Y$ and $Y'$  are random variables satisfying $Y,Y'\le 1$, and for all $\theta\ge 0$,
    \begin{equation}
        \bbE e^{\theta Y}\le\bbE e^{\theta Y'}. \label{E.mgf dominance}
    \end{equation}
    Then for all $\delta\ge0$,
    \begin{equation}\label{E.exp dominance}
    \log\bbP(Y\ge\delta)\le\frac{\delta}{2}\log\bbP\left(Y'\ge\frac{\delta}{2}\right)+\log 2.
    \end{equation}
\end{lemma}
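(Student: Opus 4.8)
The plan is to prove \eqref{E.exp dominance} by a direct Chernoff/exponential Markov argument, exploiting the moment generating function domination \eqref{E.mgf dominance} and the fact that $Y, Y' \leq 1$ almost surely. The latter bound is essential: it allows us to control $\bbE e^{\theta Y'}$ from below by a single term $e^{\theta \delta/2}\bbP(Y' \geq \delta/2)$ plus an error that stays bounded by $e^{\theta/2}$, which, after taking logarithms and optimizing in $\theta$, will not hurt the leading order.

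\emph{Step 1: lower bound the MGF of $Y'$ by its tail.} For any $\theta \geq 0$, split the expectation according to whether $Y' \geq \delta/2$ or not. On the event $\{Y' \geq \delta/2\}$ we have $e^{\theta Y'} \geq e^{\theta \delta/2}$; on the complement, since $Y' \leq 1$, we simply bound $e^{\theta Y'} \leq e^{\theta}$ (or, more carefully, $e^{\theta \delta/2}$, whichever is cleaner). This gives
\[
\bbE e^{\theta Y'} \geq e^{\theta \delta/2}\,\bbP\!\left(Y' \geq \tfrac{\delta}{2}\right).
\]
\emph{Step 2: upper bound the MGF of $Y$ by its tail at level $\delta$.} By exponential Markov, $\bbP(Y \geq \delta) \leq e^{-\theta\delta}\,\bbE e^{\theta Y}$. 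Combining with \eqref{E.mgf dominance} and Step 1,
\[
\bbP(Y \geq \delta) \leq e^{-\theta\delta}\,\bbE e^{\theta Y} \leq e^{-\theta\delta}\,\bbE e^{\theta Y'}.
\]
At this point one route is to not further lower-bound $\bbE e^{\theta Y'}$ but instead to run the Chernoff bound on $Y'$ as well: write $p_\delta := \bbP(Y' \geq \delta/2)$ and use $Y' \leq 1$ to get $\bbE e^{\theta Y'} \leq p_\delta e^{\theta} + (1-p_\delta) e^{\theta \delta/2} \leq e^{\theta\delta/2}(1 + p_\delta e^{\theta})$. Then
\[
\bbP(Y \geq \delta) \leq e^{-\theta\delta/2}\bigl(1 + p_\delta e^{\theta}\bigr).
\]

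\emph{Step 3: optimize over $\theta$.} If $p_\delta \geq 1/2$ the claimed inequality \eqref{E.exp dominance} is trivial (its right side is $\geq \log 2 - \tfrac{\delta}{2}\log 2 \geq 0 \geq \log\bbP(Y\geq\delta)$ when $\delta \leq 1$; and for $\delta > 1$ one uses $\bbP(Y\geq\delta) = 0$ since $Y \leq 1$, so both sides can be handled — actually for $\delta>1$ the left side is $-\infty$). So assume $p_\delta < 1/2$. Choose $\theta$ so that $p_\delta e^{\theta} \leq 1$, say $\theta = \log(1/p_\delta) = -\log p_\delta > 0$. Then $1 + p_\delta e^{\theta} = 2$ and
\[
\bbP(Y \geq \delta) \leq 2 \, e^{-\theta\delta/2} = 2 \, p_\delta^{\,\delta/2} = 2\,\bbP\!\left(Y' \geq \tfrac{\delta}{2}\right)^{\delta/2}.
\]
Taking logarithms yields exactly \eqref{E.exp dominance}.

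\emph{Main obstacle.} The argument is short, so the only real subtlety is the bookkeeping at the boundary cases — when $\bbP(Y'\geq\delta/2)$ is large (close to or above $1/2$), when $\delta > 1$ so that the tail of $Y$ vanishes, and when $\delta = 0$ (both sides are $\leq \log 2$ after noting $\log\bbP(Y\geq 0)\le 0$). One must check that \eqref{E.exp dominance} holds vacuously or trivially in each of these, so that the Chernoff optimization above — which needs $p_\delta < 1/2$ and $\theta > 0$ — covers the remaining substantive range. I would also double-check the direction of the inequality in Step 2 Step-3 transition: we want $\theta$ large enough to make $p_\delta e^\theta$ of constant size but small enough that $e^{-\theta\delta/2}$ still decays, and $\theta = \log(1/p_\delta)$ achieves the right trade-off precisely because it turns $e^{-\theta\delta/2}$ into $p_\delta^{\delta/2}$, matching the target.
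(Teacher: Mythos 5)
Your proposal is correct and is essentially the paper's own proof: exponential Markov on $Y$, the MGF domination \eqref{E.mgf dominance}, the bound $\bbE e^{\theta Y'}\le e^{\theta\delta/2}\bigl(1+\bbP(Y'\ge\delta/2)\,e^{\theta}\bigr)$ obtained from $Y'\le 1$ by splitting at the level $\delta/2$, and the choice $\theta=-\log\bbP(Y'\ge\delta/2)$. The only cosmetic differences are that your Step 1 lower bound on $\bbE e^{\theta Y'}$ is never used, and the case split at $\bbP(Y'\ge\delta/2)\ge 1/2$ is unnecessary since $\theta=-\log\bbP(Y'\ge\delta/2)\ge 0$ works for any nonzero value of that probability.
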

\begin{proof}
    Note that for all $\theta\ge0$, 
    $$\bbE e^{\theta Y'}=\bbE\left[e^{\theta Y'}\indicatorevent{Y'\ge\frac{\delta}{2}}\right]+\bbE\left[e^{\theta Y'}\indicatorevent{Y'<\frac{\delta}{2}}\right]\le e^\theta\bbP\left(Y'\ge\frac{\delta}{2}\right)+e^{\theta\frac{\delta}{2}}.$$
    Applying Markov's inequality we obtain
    \begin{align*}
        \log\bbP(Y\ge\delta)&\le\log(e^{-\theta\delta}\bbE e^{\theta Y})\le-\theta\delta+\log\bbE e^{\theta Y'}\le -\frac{\theta\delta}{2}+\log\left(1+\bbP\left(Y'\ge\frac{\delta}{2}\right)e^{\theta-\frac{\theta\delta}{2}})\right).
    \end{align*}
    We get \eqref{E.exp dominance} by taking $\theta=-\log\bbP(Y'\ge\frac{\delta}{2})\ge 0$, since the second term 
    $$\log\left(1+\bbP\left(Y'\ge\frac{\delta}{2}\right)e^{\theta-\frac{\theta\delta}{2}})\right)=\log\left(1+e^{-\frac{\theta\delta}{2}}\right)\le\log 2.$$
\end{proof}
\begin{proof}[Proof of Theorem \ref{T.SDeviation}]
    Let $$Y=\frac{\setsize{E(G)}}{\binom{n}{2}}-p,~~Y'=\frac{m_n}{\binom{n}{2}}-p.$$
    It's clear that both $Y,Y'\le 1$, and they satisfy~\eqref{E.mgf dominance} due to Proposition \ref{P.upper bound for MGF}. Combining with Proposition \ref{P.upper bound of m_n}, we get
    $$\log\bbP(Y\ge\deviationrate)\le\frac{\deviationrate}{2}\log\bbP\left(Y'\ge\frac{\deviationrate}{2}\right)+\log 2\le-\frac{\deviationrate}{2}\min \left( c_1(p,\deviationrate)n^2, c_2(p,\deviationrate)n\sqrt{d} \right)+\log 2.$$
    Note that event $\uedgedeviation{\deviationrate}$ is exactly $Y\ge\deviationrate$, so the proof is complete.
\end{proof}
For the proof of \Cref{P.upper bound of m_n}, it's sufficient to bound each $g_{k-1}(\unif_{k})$ individually, and then sum over $k$. We are going to bound an analogue of the moment generating function and then apply Markov's inequality. Given a measurable function $g:\sphere\to\bbR$, a subset $V\subset\sphere$ and $\lambda\in\bbR$, define 
$$\MGF{g}{V}{\lambda}:=\int_{V}\exp(\lambda g(X))d\sumeasure(X).$$
Let $q=p+p\deviationrate/32$, and select a constant $a>0$ that satisfies 
\begin{equation}
    \label{restriction on a} 
    \uthreshold{p,d}-\frac{a^2}{\sqrt d}>\max\left(\uthreshold{q,d-1},\left(1-\frac{a^2}{\sqrt d}\right)\uthreshold{q,d-1}\right),
\end{equation}
for all $d$ sufficiently large. Such an $a$ exists since Lemma \ref{L.estimate threshold} indicates a gap of order $\Omega(d^{-1/2})$ between $\uthreshold{p,d}$ and $\uthreshold{q,d-1}$. Consider the strip $\strip{t}$, and the complement of a cap $\complementcap{t}$, defined as
\begin{align*}
    \begin{split}
        \strip{t}:=\left\{\bw \in \sphere : |\inner{\bw}{\be}| \le \frac{t}{\sqrt[4]{d}}\right\},\\
        \complementcap{t}:=\left\{\bw \in \sphere : \inner{\bw}{\be} \le \frac{t}{\sqrt[4]{d}} \right\}.
    \end{split}
\end{align*}
The key observation is the following lemma.
\begin{lemma}\label{L.proportion lemma}
    Suppose $\lambda\ge 0$. Then for any function $g$ such that $g=\symmfn{g}$ a.e., and vector $Y\in \strip{a}$,
    $$\MGF{g+\indicatorfn{\scap^p_Y}}{\complementcap{a}}{\lambda}\le\left(1-q+qe^\lambda+2e^{-a^2\sqrt d/2+O(1)+\lambda}\right)\MGF{g}{\complementcap{a}}{\lambda}.$$
\end{lemma}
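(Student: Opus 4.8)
The goal is to split the cap complement $\complementcap{a} = \{\bw : \inner{\bw}{\be} \le a/\sqrt[4]{d}\}$ into two regions according to the altitude at which the symmetric rearrangement of $g+\indicatorfn{\scap^p_Y}$ picks up its extra unit of mass, and to control the integral on each piece separately. First I would recall that since $g = \symmfn g$ a.e., the function $g$ is constant on slices $\{\inner{\bx}{\be} = h\}$ and non-decreasing in $h$, and likewise $g + \indicatorfn{\scap^p_Y}$ has symmetric rearrangement $\symmfn{(g+\indicatorfn{\scap^p_Y})}$ obtained by "sliding down" the cap $\scap^p_Y$ of measure $p$. The key point is to identify the slice $\{\inner{\bx}{\be} = h_0\}$ such that, on $\complementcap{a}$, the rearranged function $\symmfn{(g+\indicatorfn{\scap^p_Y})}$ equals $g+1$ above height $h_0$ and equals $g$ below it (up to a boundary slice of measure zero), where $h_0$ is chosen so that the portion of $\complementcap{a}$ above $h_0$ has measure exactly the overlap of the displaced unit cap with $\complementcap{a}$.

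**Main steps.** (1) Write $\MGF{g+\indicatorfn{\scap^p_Y}}{\complementcap{a}}{\lambda} = \int_{\complementcap a} e^{\lambda \symmfn{(g+\indicatorfn{\scap^p_Y})}} \, d\mu$ using that the integrand only sees $\symmfn{(g+\indicatorfn{\scap^p_Y})}$ — wait, more carefully: $\MGF{\cdot}{\cdot}{\cdot}$ is defined with the plain function, but since the integrand is a function of $\bx$ only through $g(\bx) + \indicatorfn{\scap^p_Y}(\bx)$, and we want to relate it to $\symmfn g = g$; the trick is that we may replace $g + \indicatorfn{\scap^p_Y}$ by its rearrangement when integrating a monotone transform over a \emph{cap-complement} $\complementcap a$ (which is itself a super-level set of $\inner{\cdot}{\be}$), because such an integral of a symmetric-decreasing-type function over a symmetric domain is invariant under rearrangement — this is the Hardy–Littlewood-type observation, and I would either cite it or prove it in a line using layer-cake. (2) Decompose $\symmfn{(g+\indicatorfn{\scap^p_Y})} = g + \indicatorfn{T}$ where $T$ is the cap $\{\inner{\bx}{\be} \ge h_1\}$ of measure $p$ (the rearrangement just moves the unit-height region to the top $p$-cap, since $g$ itself is already a symmetric profile and adding $\indicatorfn{\scap^p_Y}$ then rearranging stacks the new unit on top). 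Then split $\complementcap a = (\complementcap a \cap T) \sqcup (\complementcap a \setminus T)$, giving
\begin{align*}
\MGF{g+\indicatorfn{\scap^p_Y}}{\complementcap a}{\lambda} = e^\lambda \int_{\complementcap a \cap T} e^{\lambda g}\, d\mu + \int_{\complementcap a \setminus T} e^{\lambda g}\, d\mu \le e^\lambda \mu(\complementcap a \cap T) \cdot \frac{\int_{\complementcap a} e^{\lambda g}}{\mu(\complementcap a)} + \int_{\complementcap a} e^{\lambda g}\, d\mu,
\end{align*}
where in the first term I use that on the cap $T$ (which sits at the top, above height $h_1 \ge a/\sqrt[4]d$ when $Y \in \strip a$ forces $T$'s boundary high enough — this is where the hypothesis $Y \in \strip a$ and the cap-altitude estimates enter) the value of $g$ is at its maximum over $\complementcap a$... hmm, this needs care. (3) Estimate $\mu(\complementcap a \cap T)$: this is the measure of the part of the top $p$-cap lying below altitude $a/\sqrt[4]d$. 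Using Lemma~\ref{sodin's Lemma} / the tail estimates $\utail{\cdot}$, one shows $\mu(\complementcap a \cap T) \le q\,\mu(\complementcap a) - \text{(something)}$ is \emph{not} the right shape; rather the natural bound is $\mu(\complementcap a \cap T)/\mu(\complementcap a) \le q + 2e^{-a^2\sqrt d/2 + O(1)}$, obtained by comparing $\mu(\complementcap a \cap T) \le \mu(T) = p$ against $\mu(\complementcap a) = 1 - \utail{a/\sqrt[4]d} \ge 1 - e^{-a^2\sqrt d /2 + O(1)}$ via Lemma~\ref{sodin's Lemma}, so that $p/\mu(\complementcap a) \le p(1 + 2e^{-a^2\sqrt d/2 + O(1)}) \le q + 2e^{-a^2\sqrt d/2+O(1)}$ for $d$ large (since $q = p + p\deviationrate/32 > p$); one absorbs the $\lambda$-dependent prefactor $e^\lambda$ on the error term as in the claimed bound $2e^{-a^2\sqrt d/2 + O(1) + \lambda}$. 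Assembling: $\MGF{g+\indicatorfn{\scap^p_Y}}{\complementcap a}{\lambda} \le \big(1 + e^\lambda(q + 2e^{-a^2\sqrt d/2+O(1)})\big)\MGF g{\complementcap a}\lambda = \big(1 - q + qe^\lambda + e^\lambda + \dots\big)$ — I need to recheck the bookkeeping so the "$1$" becomes "$1-q$"; the resolution is that the \emph{second} integral $\int_{\complementcap a \setminus T} e^{\lambda g}$ should be bounded by $(1 - \mu(\complementcap a \cap T)/\mu(\complementcap a))\MGF g{\complementcap a}\lambda \le (1-q+\text{err})\MGF g{\complementcap a}\lambda$ rather than by the whole integral, using that $g \le$ its average on the complementary region — this monotonicity (values of $g$ on $\complementcap a \setminus T$ are no larger than on $\complementcap a \cap T$, since $T$ is the top cap) is exactly what makes the split work.

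**The main obstacle.** The delicate point — and the step I expect to absorb most of the work — is justifying the directional monotonicity estimates that let us pull the average out: namely that because $g = \symmfn g$ is non-decreasing in the height $\inner{\bx}{\be}$, and because $T$ is the topmost $p$-cap while $Y \in \strip a$ guarantees the relevant altitudes satisfy the gap condition \eqref{restriction on a}, we have $\int_{\complementcap a \cap T} e^{\lambda g}\,d\mu \le \mu(\complementcap a \cap T)\cdot \sup_{\complementcap a} e^{\lambda g}$ balanced against $\int_{\complementcap a\setminus T} e^{\lambda g}\,d\mu \ge \mu(\complementcap a \setminus T)\cdot \inf e^{\lambda g}$ in a way that actually yields a convex-combination bound with the \emph{right} coefficient $q$ and not something weaker. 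Concretely one writes, with $\rho := \mu(\complementcap a \cap T)/\mu(\complementcap a)$,
\begin{align*}
\MGF{g+\indicatorfn{\scap^p_Y}}{\complementcap a}{\lambda} &= e^\lambda\!\!\int_{\complementcap a\cap T}\!\! e^{\lambda g}\,d\mu + \!\!\int_{\complementcap a\setminus T}\!\! e^{\lambda g}\,d\mu \le \big(\rho e^\lambda + (1-\rho)\big)\MGF g{\complementcap a}{\lambda} + (\text{Chebyshev-type slack from }g\text{ not constant}),
\end{align*}
and the slack term, controlled again by Lemma~\ref{sodin's Lemma}, is what contributes the $2e^{-a^2\sqrt d/2 + O(1)+\lambda}$; then $\rho \le q$ up to that same error finishes it. I would carry out the altitude computation for $T$ and $\complementcap a \cap T$ explicitly using $h(\cdot) = \utailinverse\cdot$ and the asymptotics $\uthreshold{p,d} = (\gtailinverse p + o(1))/\sqrt d$, and verify that condition \eqref{restriction on a} on $a$ is precisely what keeps $\rho$ below $q$ after accounting for the sub-Gaussian-scale fluctuations, closing the argument.
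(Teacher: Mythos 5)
Your reduction to the rearranged picture breaks at two points, and the second one is fatal. First, the replacement step itself goes the wrong way: $\complementcap{a}$ is a sub-level set of $\inner{\cdot}{\be}$, so the Hardy--Littlewood inequality gives $\int_{\complementcap{a}} F^\bullet\,d\mu \le \int_{\complementcap{a}} F\,d\mu$ for $F=e^{\lambda(g+\indicatorfn{\scap^p_Y})}$; rearranging pushes mass into the excluded top region, so $\MGF{\symmfn{(g+\indicatorfn{\scap^p_Y})}}{\complementcap{a}}{\lambda}$ is a \emph{lower} bound for the quantity you must bound from above, not an upper bound (and in any case $\symmfn{(g+\indicatorfn{\scap^p_Y})}\ne g+\indicatorfn{T}$ for general symmetric $g$; that identity needs $g$ constant or level-compatible with the added indicator). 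Second, and more fundamentally, even granting the reduction, the inequality your bookkeeping requires --- that the $e^{\lambda g}$-weighted fraction of $\complementcap{a}$ lying in the top $p$-cap $T$ is at most $q$ plus an exponentially small error --- is false for general $g=\symmfn{g}$. Since $g$ is non-decreasing in height, $T\cap\complementcap{a}$ is exactly where $e^{\lambda g}$ is largest, so that weighted fraction is at least the unweighted one and can be made arbitrarily close to $1$ (take $g$ equal to a huge constant on a thin band just below altitude $a/\sqrt[4]{d}$ and $0$ below); the lemma is stated for arbitrary symmetric $g$, so no choice of ``Chebyshev-type slack'' can rescue the convex-combination bound. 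A telltale sign is that in your scheme the hypotheses $Y\in\strip{a}$ and \eqref{restriction on a} are never genuinely used, since $T$ does not depend on $Y$.

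The paper's proof avoids this by never moving the cap: it keeps $\scap^p_Y$ (centered near the equator) and conditions on slices. For every slice $S_t=\{\inner{\unif}{\be}=t\}$ with $|t|\le a/\sqrt[4]{d}$, one computes $\mu_t(S_t\cap\scap^p_Y)=\utaild{d-1}{(\uthreshold{p,d}-ty)/\sqrt{(1-y^2)(1-t^2)}}\le q$, which is precisely where $Y\in\strip{a}$ (so $|ty|\le a^2/\sqrt{d}$) and the gap condition \eqref{restriction on a} enter; since $g$ is constant on each slice, the factor $1-q+qe^\lambda$ then comes out with the correct weight $e^{\lambda g(t)}$ slice by slice --- this per-slice proportion bound, uniform in $t$, is what makes the estimate valid for every symmetric $g$ simultaneously. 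The part of $\complementcap{a}$ below the strip is handled crudely by a factor $e^\lambda$, the monotonicity of $g$ (its average of $e^{\lambda g}$ there is at most its average on all of $\complementcap{a}$), and the measure bound from Lemma~\ref{sodin's Lemma}, which yields the $2e^{-a^2\sqrt{d}/2+O(1)+\lambda}$ term. That slice-wise conditioning is the idea missing from your proposal, and without it the approach via the top cap cannot be repaired.
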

\begin{proof}
    Let $S_t=\{\unif\in\sphere:\inner{\unif}{\be}=t\}$ be a $d-2$ dimensional slice at $t$ and $\sumeasure_t$ denote the uniform measure on $S_t$. We first prove that for all $|t|\le a/\sqrt[4]{d}$,
    \begin{equation}\label{E.integral estimate}   \int_{S_t}\exp[\lambda\cdot\indicatorfn{\scap^p_{Y}}(\unif)]d\sumeasure_t(\unif)\le1-q+qe^\lambda.
    \end{equation}
    It is equivalent to showing $\sumeasure_t(S_t\cap\scap_Y^p)\le q$, which can be computed directly. Suppose $Y=(y,\sqrt{1-y^2},0,\ldots,0)$, then 
    $$S_t\cap\scap_Y^p=\{\unif=(t,u,\ldots):ty+u\sqrt{1-y^2}\ge\uthreshold{p,d}\}.$$
    After scaling by $1/\sqrt{1-t^2}$, $S_t\cap\scap_Y^p$ is a spherical cap on  $\spherearg{d-2}$ with altitude $(\uthreshold{p,d}-ty)/\sqrt{(1-y^2)(1-t^2)}$, and so
    \begin{align*}
        \sumeasure_t(S_t\cap\scap_Y^p)=\utaild{d-1}{\frac{\uthreshold{p,d}-ty}{\sqrt{(1-y^2)(1-t^2)}}}.
    \end{align*}
    Recall that $|t|,|y|\le a/\sqrt[4]{d}$ and $a$ satisfies \eqref{restriction on a}. This is upper bounded by $\utaild{d-1}{\uthreshold{q,d-1}}=q$, which proves \eqref{E.integral estimate}.
    
    Next, we show that the integral on $\complementcap{a}-\strip{a}$ is small. For those $t<-a/\sqrt[4]{d}$, the integral on $\complementcap{a}-\strip{a}$ can be bounded by that on $\strip{a}$ since $g$ is non-decreasing in the direction $\eb$ as
    \begin{align*}
        &\MGF{g+\indicatorfn{\scap^p_Y}}{\complementcap{a}-\strip{a}}{\lambda}
        \le e^\lambda\MGF{g}{\complementcap{a}-\strip{a}}{\lambda}\\
        \le& e^\lambda \frac{\sumeasure(\complementcap{a}-\strip{a})}{\sumeasure(\complementcap{a})}\MGF{g}{\complementcap{a}}{\lambda}\le 2e^{-a^2\sqrt d/2+O(1)+\lambda}\MGF{g}{\complementcap{a}}{\lambda}.
    \end{align*}
    Here in the last inequality, we apply Lemma \ref{sodin's Lemma} on $\sumeasure(\complementcap{a}-\strip{a})$. Multiplying \eqref{E.integral estimate} by the value of $g$ on $S_t$ and integrating over $|t|\le a/\sqrt[4]{d}$ gives
    \begin{align*}
    \MGF{g+\indicatorfn{\scap^p_Y}}{\complementcap{a}}{\lambda}&=\MGF{g+\indicatorfn{\scap^p_Y}}{\strip{a}}{\lambda}+\MGF{g+\indicatorfn{\scap^p_Y}}{\complementcap{a}-\strip{a}}{\lambda}\\
    &\le\left(1-q+qe^\lambda+2e^{-a^2\sqrt d/2+O(1)+\lambda}\right)\MGF{g}{\complementcap{a}}{\lambda}.
    \end{align*}
\end{proof}
Finally, we derive an upper bound on $g_{k-1}(\unif_{k})$ from Lemma \ref{L.proportion lemma} and complete the proof of Proposition \ref{P.upper bound of m_n}.
\begin{proof}[Proof of \Cref{P.upper bound of m_n}]
Let $\lambda$ and $c_0$ be constants to be determined later and define
$$h\coloneqq \min\left(\frac{c_0\sqrt n}{\sqrt[4]{d}},\frac{a}{2}\right).$$  Define the index set 
$$I:=\{1\le k\le n:\unif_k\notin \strip{h}\}.$$
These vectors $\{\unif_k\}_{k\in I}$ are atypical and have large contributions to $\MGF{g_k}{\complementcap{a}}{\lambda}$. Consider the bad event $\mathcal B:=\{\setsize{I}\ge p\deviationrate n/16\}$ of having too many atypical vectors. Its probability can be controlled as follows. By Lemma \ref{sodin's Lemma}, for a single $k$,
$$\bbP(\unif_k\notin\strip{h})=2\utail{h/\sqrt[4]{d}}\le\exp\left(-\sqrt{d}h^2/2+O(1)\right).$$
Then an union bound gives
$$\bbP(\mathcal B)\le 2^n\cdot\bbP\left(\unif_k\notin\strip{h},\forall 1\le k\le  \frac{p\deviationrate n}{16}\right)\le2^n\exp\left(\frac{p\deviationrate n}{16}\left(-\sqrt{d}h^2/2+O(1)\right)\right)\le \exp\left(-p\deviationrate n\sqrt{d}h^2/64\right).$$
Since $g_k = \symmfn{\left(g_{k-1}+\indicatorfn{\scap^p_{\unif_k}}\right)}$, we can use Lemma~\ref{L.proportion lemma} to bound $\MGF{g_k}{\complementcap{a}}{\lambda}$ as
\begin{equation*}
        \MGF{g_k}{\complementcap{a}}{\lambda}\le\MGF{g_{k-1}+\indicatorfn{\scap^p_{\unif_k}}}{\complementcap{a}}{\lambda}\le
        \begin{cases}
             \left(1-q+qe^\lambda+o(1)\right)\MGF{g_{k-1}}{\complementcap{a}}{\lambda}& \text{$k\notin I$,}\\
            e^\lambda\MGF{g_{k-1}}{\complementcap{a}}{\lambda}& \text{$k\in I$.}\\
        \end{cases}
    \end{equation*}
    Note that $\MGF{g_0}{\complementcap{a}}{\lambda}=\sumeasure(\complementcap{a})\le 1$, we can deduce from above that for all $1\le k\le n$
    $$ \MGF{g_k}{\complementcap{a}}{\lambda}\le(1-q+qe^\lambda+o(1))^k\exp\left(\lambda\setsize{I\cap\{1,\ldots,k\}}\right).$$
    In particular, on $\mathcal B^c$, $\setsize{I\cap\{1,\ldots,k\}}\le p\deviationrate n/16$ and thus for all $1\le k\le n$,
    $$\log \MGF{g_k}{\complementcap{a}}{\lambda}\le k\log(1-q+qe^\lambda+o(1))+\lambda p\deviationrate n/16.$$
    From here we derive a bound on $g_{k-1}(\unif_k)$ by using Markov's inequality. Since $h \leq a/2 = O(1)$, we have $$\sumeasure(\complementcap{a}-\complementcap{h})\asymp \sumeasure(\complementcap{h}^c)=\exp\left(-\sqrt{d}h^2/2+O(1)\right).$$
    Then for all $k\notin I$, $\unif_k\in\strip{h}$ and
    $$\MGF{g_{k-1}}{\complementcap{a}}{\lambda}\ge \MGF{g_{k-1}}{\complementcap{a}-\complementcap{h}}{\lambda}\ge\sumeasure(\complementcap{a}-\complementcap{h})e^{\lambda g_{k-1}(\unif_k)},$$
    which implies
    \begin{align*}
    g_{k-1}(\unif_k)&\le\frac{1}{\lambda}\log\MGF{g_{k-1}}{\complementcap{a}}{\lambda}-\frac{1}{\lambda}\log\sumeasure(\complementcap{a}-\complementcap{h}) \\
    &\le\frac{\log(1-q+qe^\lambda+o(1))}{\lambda} (k-1)+p\deviationrate n/16+\frac{\sqrt{d}h^2}{2\lambda}+O(1).
    \end{align*}
    For $k\in I$, $\unif_k\notin \strip{h}$, and $g_{k-1}(\unif_k)\le n$ is trivially true.
    Finally, we upper-bound $m_n$. Summing over $k$, we get that on $\mathcal B^c$,
    \begin{align*}
        m_n&=\sum_{k\notin I}g_{k-1}(\unif_k)+\sum_{k\in I}g_{k-1}(\unif_k)\\
        &\le \frac{\log(1-q+qe^\lambda+o(1))}{\lambda}\binom{n}{2}+p\deviationrate n^2/16+\frac{n\sqrt{d}h^2}{2\lambda}+O(n)+p\deviationrate n^2/16.
    \end{align*}
    Choose $\lambda$ such that $\frac{1}{\lambda}\log(1-q+qe^\lambda)\le p+p\deviationrate/16$, and $c_0\le\sqrt{p\deviationrate\lambda}/10$. Such a $\lambda$ exists since     $$\underset{\lambda\to 0^+}{\lim}\frac{\log(1-q+qe^\lambda)}{\lambda}=q<p+p\deviationrate/16.$$
    Then $m_n$ is bounded by 
    $$(p+p\deviationrate/16)\binom{n}{2}+p\deviationrate n^2/16+p\deviationrate n^2/16+O\left(\frac{n}{\lambda}\right)+p\deviationrate n^2/16\le \left(p+p\deviationrate/2\right)\binom{n}{2}.$$
    So we can conclude that \eqref{E.upper bound of m_n} holds on $\mathcal B^c$. The proof of Proposition \ref{P.upper bound of m_n} is completed by noting that
    $$\bbP(\mathcal B)\le \exp(-p\deviationrate n\sqrt{d}h^2/64)\le\exp(-\min(c_1(p,\deviationrate)n^2, c_2(p,\deviationrate)n\sqrt d)),$$
    where $c_1(p,\deviationrate)=-p\deviationrate c_0^2/64$ and $c_2(p,\deviationrate)=-p\deviationrate a^2/64$ are constants.
\end{proof}
\begin{remark}\label{R.explicit constants}
    We can determine the dependence of the constants in Theorem \ref{T.GDeviation} (and also Theorem \ref{T.SDeviation}) on $p$ and $\varepsilon$ by estimating the constants used in the proof of Proposition \ref{P.upper bound of m_n}. Take $\lambda=\deviationrate/100$ and $c_0=\sqrt{p}\deviationrate/100$. Note that $\gtailinverse{p}\sim\sqrt{\log 1/p}$, so we may take
    $$a^2=\frac{\gtailinverse{p}-\gtailinverse{p+p\deviationrate/32}}{2}=\Omega\left(\frac{\deviationrate}{\sqrt{\log(1/p)}}\right).$$
    Then the corresponding constants satisfy $c_1(p,\deviationrate)=\Omega(p^2\deviationrate^3)$ and $c_2(p,\deviationrate)=\Omega(p\deviationrate^2/\sqrt{\log 1/p})$. This gives $$\bbP(\uedgedeviation{\deviationrate})\le\exp\left(-\min\left(\Omega(p^2\deviationrate^4)n^2,\Omega\left(\frac{p\deviationrate^3}{\sqrt{\log 1/p}}\right)n\sqrt{d}\right)\right).$$
\end{remark}

\section*{Acknowledgements}

We would like to thank Jian Ding for introducing the problem, and for valuable discussions and suggestions during the early stages of this project. This work was supported by the National Key R\&D program of China (No. 2023YFA1010103).

\bibliographystyle{plainnat}
\bibliography{bibliography}

\end{document}